\documentclass[10pt]{amsart}

\usepackage[T1]{fontenc}
\usepackage[utf8]{inputenc}
\usepackage{amsmath,amssymb,mathtools,stmaryrd,thmtools}
\usepackage{graphicx}
\usepackage{xcolor}
\usepackage{fancyhdr}
\usepackage{hyperref}
\hypersetup{colorlinks=true, linkcolor=blue, citecolor=magenta, urlcolor=green}
\usepackage[noabbrev]{cleveref}
\usepackage{comment}
\usepackage{url}
\usepackage{tikz-cd}
\usepackage{xy}
\input xy
\xyoption{all}
\usepackage{calrsfs}

\makeatletter
\def\thm@space@setup{%
  \thm@preskip=6pt
  \thm@postskip=6pt
}
\makeatother

\newtheorem{theorem}{Theorem}[section]
\newtheorem{lemma}[theorem]{Lemma}

\newtheorem{corollary}[theorem]{Corollary}

\theoremstyle{definition}

\newtheorem{example}[theorem]{Example}

\newtheorem{conjecture}[theorem]{Conjecture}

\theoremstyle{remark}



\crefname{theorem}{Theorem}{Theorems}
\Crefname{theorem}{Theorem}{Theorems}
\crefname{lemma}{Lemma}{Lemmas}
\Crefname{lemma}{Lemma}{Lemmas}
\crefname{proposition}{Proposition}{Propositions}
\Crefname{proposition}{Proposition}{Propositions}
\crefname{corollary}{Corollary}{Corollaries}
\Crefname{corollary}{Corollary}{Corollaries}
\crefname{definition}{Definition}{Definitions}
\Crefname{definition}{Definition}{Definitions}
\crefname{example}{Example}{Examples}
\Crefname{example}{Example}{Examples}
\crefname{conjecture}{Conjecture}{Conjectures}
\Crefname{conjecture}{Conjecture}{Conjectures}
\crefname{equation}{Equation}{Equations}
\Crefname{equation}{Equation}{Equations}

\numberwithin{equation}{section}

\newcommand{\cc}{\mathbb{C}}

\newcommand{\nn}{\mathbb{N}}

\newcommand{\qq}{\mathbb{Q}}
\newcommand{\Q}{\mathbb{Q}}
\newcommand{\rr}{\mathbb{R}}
\newcommand{\zz}{\mathbb{Z}}

\newcommand{\red}{\mathrm{red}}
\newcommand{\atoms}{\mathcal{A}}

\newcommand{\Z}{\mathsf{Z}}
\newcommand{\Lset}{\mathsf{L}}

\newcommand{\supp}{\operatorname{supp}}

\subjclass[2020]{Primary: 20M13, 16Y60; Secondary: 06F05, 20M14}
\keywords{positive semidomains, unique factorization, bi-UF semidomains, algebraic monogenic semidomains, Perron--Frobenius theory}

\title{The Bi-UFS Positive Conjecture for Algebraic Semidomains}

\author{Aaditya Bilakanti}
\address{}
\email{abilak@gmail.com}

\author{Marly Gotti}
\address{iMath-Lab\\Cambridge, CA 02139}
\email{marlygotti@imath-lab.com}

\author{Amrit Kandasamy}
\address{}
\email{akandas5@asu.edu}

\author{Hengrui Liang}
\address{}
\email{spaceblastxy1@gmail.com}

\author{Jonathan Liu}
\address{}
\email{jonathanliu2024@gmail.com}

\author{Harold Polo}
\address{iMath-Lab\\Cambridge, CA 02139}
\email{harry.polo@imath-lab.com}

\author{Jason Yang}
\address{}
\email{jyang.super@gmail.com}

\author{Alan Yao}
\address{}
\email{alanyao2008@gmail.com}

\date{\today}

\begin{document}

\begin{abstract}
A semidomain is called \emph{bi-UFS} if both its additive monoid and its nonzero multiplicative monoid are unique factorization monoids. The \emph{Bi-UFS Positive Conjecture} predicts that the only positive semidomain with this property is $\nn_0$. We prove this conjecture for finitely generated algebraic positive semidomains. In the cyclic case, we show that for every positive algebraic number $\alpha$, the semidomain $\nn_0[\alpha]$ is bi-UFS if and only if $\alpha \in \nn$, equivalently $\nn_0[\alpha]=\nn_0$. The proof separates the quadratic case, where an analysis of the least additive atom larger than $1$ leaves only the examples $\nn_0[\sqrt 2]$ and $\nn_0[(1+\sqrt 5)/2]$ to exclude, from the higher-degree case, where explicit multiplicative identities force the minimal polynomial into impossible forms. We then give a Perron--Frobenius argument showing that if $\alpha_1,\ldots,\alpha_n$ are positive algebraic numbers and $\nn_0[\alpha_1,\ldots,\alpha_n]$ is bi-UFS then this semidomain is $\nn_0$. Finally we prove a reduction theorem for complex semidomains: every bi-UFS subsemidomain of $\cc$ that is not a UFD and has finitely many additive atoms is isomorphic to a positive semidomain. Consequently, every finitely generated algebraic bi-UFS semidomain over $\cc$ is either an integral domain or isomorphic to $\nn_0$.
\end{abstract}

\maketitle

\bigskip

\section{Introduction}
\label{sec:intro}

The atomic structure of positive monoids, that is, additive submonoids of \(\rr_{\ge 0}\), has been studied extensively. Numerical monoids give the simplest examples, and Puiseux monoids, namely additive submonoids of \(\qq_{\ge 0}\), have also been examined in detail; see~\cite{CGG20,CGGP26} and the references therein. A subset \(S\) of the complex field \(\cc\) is a \emph{complex semiring} if it contains \(0\) and \(1\) and is closed under the usual addition and multiplication. A complex semiring consisting of positive real numbers is called positive, and a complex semiring consisting of algebraic numbers is called algebraic. For a complex semiring \(S\), both the additive monoid \((S,+)\) and the multiplicative monoid \((S^*,\cdot)\), where \(S^*:=S\setminus\{0\}\), are natural objects of study. Their interaction is the central theme of this paper.

A cancellative commutative monoid is a \emph{unique factorization monoid} (UFM) if every nonunit factors uniquely into atoms. A cancellative commutative monoid is \emph{atomic} if every nonunit factors into finitely many atoms, and an atomic monoid is a \emph{half-factorial monoid} (HFM) if every two factorizations of the same element have the same number of atoms, counted with multiplicity. A complex semidomain is called a \emph{bi-UFS} (resp., \emph{bi-HFS}) if both its additive and multiplicative monoids are UFMs (resp., HFMs).

The prototypical semidomain \(\nn_0\) is a bi-UFS: its additive monoid is the free monoid on \(\{1\}\), and its multiplicative monoid is the free multiplicative monoid on the rational primes. In their study of bi-atomic semirings, Baeth, Chapman, and Gotti~\cite{BCG21} proposed the following conjecture, which is one of the main motivations for this paper.

\begin{conjecture}[Bi-UFS Positive Conjecture \protect{\cite[Conjecture~7.7]{BCG21}}]\label{conj:biUF}
A positive semidomain \(S\) is a bi-UFS if and only if \(S=\nn_0\).
\end{conjecture}

The conjecture is natural because the two factorization structures pull in opposite directions. Additive unique factorization makes a positive semiring resemble a free additive monoid, whereas multiplicative unique factorization imposes strong divisibility restrictions on the same elements. The main purpose of this paper is to give partial positive answers to the Bi-UFS Positive Conjecture.

The phenomenon of nonunique factorization has been widely investigated in commutative monoids and, more recently, in semidomains. Interest in the atomic structure, ideal theory, and factorization properties of monogenic semidomains increased after the appearance of the papers~\cite{CF19} by Campanini and Facchini and~\cite{CGG20} by Chapman, Gotti, and Gotti. The paper~\cite{CF19} studies factorization and ideal-theoretic aspects of the transcendental monogenic semidomain \(\nn_0[x]\), while~\cite{CGG20} investigates factorization and sets of lengths in the additive structure of monogenic rational semidomains. For further studies of additive monoids of monogenic semidomains, see~\cite{ABP21,hP23}; for algebraic generators, see~\cite{ABLST23,DDGLPVZ25}. Rational examples had appeared earlier in~\cite[Section~5]{GG18}. Recent work on multiplicative monoids of monogenic semidomains includes~\cite{DGZ26}.

The main class of complex semidomains considered in this paper consists of monogenic semidomains, namely simple semiring extensions
\[
  \nn_0[\rho]=\{f(\rho): f(x)\in \nn_0[x]\}
\]
of \(\nn_0\) by an element \(\rho\in\cc\). To simplify notation, set \(S_\rho:=\nn_0[\rho]\), and call \(S_\rho\) rational, positive, real, or algebraic according as \(\rho\) has the corresponding property.

Throughout this paper, the following characterization of additive factoriality and additive half-factoriality for algebraic monogenic semidomains, due to Correa-Morris and Gotti, plays an essential role.

\begin{theorem}[Correa-Morris--Gotti, \protect{\cite[Theorem~5.4]{CG22}}]\label{prop:additively-factoriality-characterizations}
Let \(\alpha\) be a positive algebraic number with minimal polynomial \(m_\alpha(x)\in\qq[x]\). Then the following conditions are equivalent.
\begin{enumerate}
  \item[(a)] \((S_\alpha,+)\) is a UFM.
  \smallskip
  \item[(b)] \((S_\alpha,+)\) is an HFM.
  \smallskip
  \item[(c)] \(\deg m_\alpha(x)=|\atoms^+(S_\alpha)|\).
\end{enumerate}
\end{theorem}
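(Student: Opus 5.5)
The plan is to prove the cycle (a) $\Rightarrow$ (b) $\Rightarrow$ (c) $\Rightarrow$ (a), working throughout in the additive monoid $(S_\alpha,+)$. This monoid is reduced because $S_\alpha\subseteq\rr_{\ge 0}$, and it is generated by the powers $\alpha^n$ for $n\ge 0$. The implication (a) $\Rightarrow$ (b) is immediate, since a UFM is an HFM.

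For (c) $\Rightarrow$ (a), write $d=\deg m_\alpha(x)$. First I will show that $(S_\alpha,+)$ is atomic and that its atoms are exactly those powers $\alpha^n$ which are not sums of other elements of $S_\alpha$. If $\alpha\ge 1$, then $S_\alpha$ is a discrete subset of $\rr_{\ge 0}$ only when it is finitely generated; otherwise I rely on the fact that the atom set is a subset of $\{\alpha^n\}$ that generates $S_\alpha$. Concretely, I will argue that the submonoid generated by the atoms equals $S_\alpha$ whenever $S_\alpha$ is atomic, and use (c) as the input for atomicity. Assume there are exactly $d$ atoms $a_1,\dots,a_d$. These generate $S_\alpha$, so they span $\qq(\alpha)$ over $\qq$. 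Since $\qq(\alpha)$ has dimension $d$, the atoms are $\qq$-linearly independent. It follows that every element has a unique expression $\sum c_ia_i$ with $c_i\in\nn_0$, so $(S_\alpha,+)\cong\nn_0^d$ is free, i.e.\ a UFM. The spanning claim holds because $S_\alpha-S_\alpha\supseteq\zz[\alpha]$ and $\zz[\alpha]$ spans $\qq(\alpha)$.

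For (b) $\Rightarrow$ (c), the inequality $|\atoms^+(S_\alpha)|\ge d$ follows from the same spanning argument, provided there are finitely many atoms and the monoid is atomic. The main obstacle is the reverse inequality: showing that half-factoriality forbids more than $d$ atoms, and also excludes infinitely many atoms. I will use the minimal relation. Write $m_\alpha$, scaled to have integer coefficients, as $p(x)-q(x)$ with $p,q\in\nn_0[x]$ having disjoint supports. Then for every $k\ge0$ we get an equality $p(\alpha)\alpha^k=q(\alpha)\alpha^k$ between two sums of powers of $\alpha$. Half-factoriality together with a length function forces these relations to be length-balanced. Indeed, HFM implies the existence of a monoid homomorphism $\ell\colon S_\alpha\to\nn_0$ with $\ell(a)=1$ on atoms, extended to a $\qq$-linear functional on $\qq(\alpha)$.

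If there were more than $d$ atoms, there would be a nontrivial $\qq$-linear relation among them. Clearing denominators yields two distinct factorizations of one element, and the relation must have equal total coefficients. From this I will try to build two factorizations of a suitable multiple with different lengths. The tool is the Perron--Frobenius/positivity observation that a relation $\sum c_ia_i=0$ with $\sum c_i=0$ and the single linear functional $\ell$ can be combined with multiplication by $\alpha$. Multiplication by $\alpha$ maps atoms to sums of atoms and so shifts lengths, namely $\ell(\alpha x)=\ell(\alpha)\ell(x)$-type behavior fails. This should produce a contradiction, as in Correa-Morris and Gotti.

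I expect this step to be the delicate one. My first attempt will be to show that $\ell$ and the dimension count force every atom-relation to be trivial, via the linear-independence argument given above.
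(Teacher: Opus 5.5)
The paper does not prove this statement; it quotes it from \cite{CG22}. So I am judging your proposal against what a complete argument needs.

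The decisive gap is the upper bound $|\atoms^+(S_\alpha)|\le d$ in (b) $\Rightarrow$ (c). Your paragraph about Perron--Frobenius and multiplication by $\alpha$ never becomes an argument, and it ends with ``this should produce a contradiction.'' Two observations close the gap.
\begin{enumerate}
  \item[(i)] $\atoms^+(S_\alpha)=\{\alpha^n : 0\le n<N\}$ for some $N\in\nn_0\cup\{\infty\}$. Atoms must be single powers of $\alpha$, because the powers generate $S_\alpha$. Moreover, if $\alpha^n=x+y$ with $x,y\neq 0$, then $\alpha^{n+1}=\alpha x+\alpha y$, so being a non-atom propagates upward.
  \item[(ii)] Suppose there were more than $d$ atoms, finitely or infinitely many. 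Then $1,\alpha,\dots,\alpha^d$ would all be atoms. Take the minimal pair, $p(x)-q(x)=c\,m_\alpha(x)$ with $c\in\nn$; both $p$ and $q$ have degree at most $d$. The equality $p(\alpha)=q(\alpha)$ then exhibits two factorizations of one element, of lengths $p(1)$ and $q(1)$. These lengths differ, because $p(1)-q(1)=c\,m_\alpha(1)\neq 0$ whenever $\alpha\neq 1$ (the case $\alpha=1$ is trivial).
\end{enumerate}
You had both ingredients on the table. Once $1,\dots,\alpha^d$ are known to be atoms, applying your length functional to $m_\alpha(\alpha)=0$ gives $m_\alpha(1)=0$, which is false. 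What you missed is that for $\alpha\neq1$ the minimal relation can never be length-balanced, so it is itself the forbidden relation. No positivity or Perron--Frobenius argument is needed.

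There is also a gap in (c) $\Rightarrow$ (a). ``Use (c) as the input for atomicity'' is not an argument, and ``the atom set generates $S_\alpha$'' is atomicity itself, so the step is circular. Your remark on discreteness is also wrong: for every $\alpha\ge 1$, all nonzero elements of $S_\alpha$ are at least $1$, whether or not $S_\alpha$ is finitely generated (for instance $\alpha=3/2$).

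The fix is to deduce $\alpha\ge 1$ from (c). By (i), condition (c) with $\alpha\neq 1$ means $\atoms^+(S_\alpha)=\{1,\alpha,\dots,\alpha^{d-1}\}$. So $1$ is an atom, while $\alpha^d$ is not, and hence $\alpha^d=x+y$ with $x,y\in S_\alpha\setminus\{0\}$. If $\alpha<1$, every monomial $\alpha^n$ occurring in $x$ or $y$ satisfies $\alpha^n<\alpha^d$, hence $n>d$. Dividing by $\alpha^d$ then writes $1$ as a sum of two nonzero elements of $S_\alpha$, a contradiction. Therefore $\alpha\ge 1$ and every nonzero element is at least $1$. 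An element $x$ is then a sum of at most $\lfloor x\rfloor$ nonzero elements, and a decomposition with the maximal number of summands consists of atoms, which gives atomicity.

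With atomicity in hand, your spanning and dimension count correctly shows the $d$ atoms are $\qq$-linearly independent, and hence that factorization is unique. The same spanning argument gives the lower bound $|\atoms^+(S_\alpha)|\ge d$ in (b) $\Rightarrow$ (c).
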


This paper has two aims. First, we resolve~\Cref{conj:biUF} for several natural algebraic classes of positive semidomains, beginning with algebraic monogenic semidomains and then extending this result to semirings generated by finitely many positive algebraic numbers. Second, we connect the positive case to the complex algebraic setting through a reduction theorem.

For an algebraic number \(\alpha\in\rr_{>0}\), our first main result states that the monogenic semidomain \(\nn_0[\alpha]\) is a bi-UFS if and only if \(\alpha\in\nn\), in which case \(\nn_0[\alpha]=\nn_0\). The proof divides according to the algebraic degree of \(\alpha\). Although the degree-two case was previously established by Gotti, Graia, Han, and Liang~\cite[Theorem~3.6]{GGHL26},~\Cref{sec:deg2} gives a different approach through an analysis of the smallest nontrivial additive atom
\[
  \ell:=\min(\atoms^+(S)\setminus\{1\}),
\]
where \(S=\nn_0[\alpha]\). We show, by explicit algebraic identities and a sequence of interval exclusions, that \(\ell\) must satisfy \(1<\ell\le \phi\), where \(\phi=(1+\sqrt 5)/2\) is the golden ratio. The remaining minimal polynomials are then ruled out via explicit nonunique factorizations in \(\nn_0[\sqrt 2]\) and \(\nn_0[\phi]\). The general degree case, treated in~\Cref{sec:deg3up}, follows a different route. The parametric identity
\[
    (1 + \alpha^r + \alpha^{2r})(1 + \alpha^{3r}) = (1 + \alpha^r)(1 + \alpha^{2r} + \alpha^{4r})
\]
extracts structural information about the minimal polynomial of \(\alpha\) from the failure of certain elements to be multiplicative atoms. This reduces the minimal polynomial to one of two specific forms, each of which is then ruled out directly.

Our second main result, established in~\Cref{sec:multigenerator}, extends the cyclic theorem to finitely many positive algebraic generators. If \(\alpha_1,\ldots,\alpha_n\in\rr_{>0}\) are algebraic and \(S:=\nn_0[\alpha_1,\ldots,\alpha_n]\) is a bi-UFS then \(S=\nn_0\). The argument is geometric rather than combinatorial. The additive atoms of \(S\) generate a rational cone \(C\) inside the number field \(K:=\qq(\alpha_1,\ldots,\alpha_n)\), and a Perron--Frobenius analysis of the matrix representing multiplication by an interior element of \(C\) shows that \(C\) is closed under inversion. An extreme-ray argument then forces \(C\) to be one-dimensional, and so \(S=\nn_0\). The cyclic case follows as a special case of this theorem, although the independent treatment of the cyclic case is more elementary and yields the structural bound \(\ell\le\phi\).

Finally, in~\Cref{sec:complex}, we move beyond the positive setting and consider bi-UFS semidomains \(S\subseteq\cc\) with finitely many additive atoms. We show that such an \(S\) is isomorphic to a positive semidomain, with the isomorphism built from the Perron eigenvector of a multiplication matrix. Combined with the multigenerator result and with the observation that every finitely generated algebraic semidomain (that is not a UFD) over \(\cc\) has finitely many additive atoms, this reduction shows that the finitely generated algebraic complex case follows from the positive case.

\Cref{sec:prelim} collects the notation and background from factorization theory and nonnegative matrices used throughout.

\bigskip

\section{Preliminaries}\label{sec:prelim}

We collect the notation and background used throughout. For commutative monoids and factorization theory we follow~\cite{GH06}, and for nonnegative matrices we use~\cite{HJ13}.

\subsection{General Notation}

We write $\nn$ for the positive integers and $\nn_0 := \nn \cup \{0\}$. For $a, b \in \zz$, set $\llbracket a,b \rrbracket := \{n \in \zz : a \le n \le b\}$. For $X \subseteq \rr$ and $r \in \rr$, write $X_{\ge r} := \{x \in X : x \ge r\}$, and similarly for $X_{>r}$, $X_{\le r}$, $X_{<r}$. The support of $f(x) \in \qq[x]$, denoted $\supp(f(x))$, is the set of exponents of nonzero monomials in $f$.

\subsection{Monoids and Factorizations}

Throughout the paper, the term \emph{monoid} means a commutative, cancellative semigroup with identity. Let $M$ be a monoid with identity $\iota$ and set $M^\bullet := M \setminus \{\iota\}$. We let $\mathcal U(M)$ denote the group of units of $M$ and call $M$ \emph{reduced} if $\mathcal U(M) = \{\iota\}$. For $x, y \in M$, we write $x \mid_M y$ if $y = xz$ for some $z \in M$, and reserve $\mid$ for divisibility in $\zz$ or $\nn$.

An element $a \in M \setminus \mathcal U(M)$ is an \emph{atom} if $a = xy$ in $M$ forces $x \in \mathcal U(M)$ or $y \in \mathcal U(M)$, and we denote the set of atoms by $\atoms(M)$. The monoid $M$ is \emph{atomic} provided every element of $M \setminus \mathcal U(M)$ is a product of atoms. An ascending chain of principal ideals $(x_n M)_{n \in \nn}$ is said to \emph{stabilize} if $x_n M = x_{n+1} M$ for all sufficiently large $n$. The monoid $M$ satisfies the \emph{ascending chain condition on principal ideals} (ACCP) if every such chain stabilizes. By~\cite[Proposition~1.1.4]{GH06}, every monoid satisfying the ACCP is atomic.

A \emph{free commutative monoid} on a set $P$ is a commutative monoid $F$ together with a map $\iota \colon P \to F$ such that every map from $P$ to a commutative monoid $N$ extends uniquely to a monoid homomorphism $F \to N$. Concretely, $F$ may be realized as the set of formal finite products $\prod_{p \in P} p^{n_p}$ with $n_p \in \nn_0$ and $n_p = 0$ for all but finitely many $p$, multiplied componentwise. For each set $P$, the free commutative monoid on $P$ is unique up to isomorphism.

Suppose $M$ is atomic, and let $M_{\red} := M / \mathcal U(M)$ denote its reduced monoid. Write $\Z(M)$ for the free commutative monoid on $\atoms(M_{\red})$, and let $\pi \colon \Z(M) \to M_{\red}$ be the unique monoid homomorphism fixing $\atoms(M_{\red})$ elementwise. For each $b \in M$, the \emph{set of factorizations} of $b$ is $\Z_M(b) := \pi^{-1}\bigl(b\,\mathcal U(M)\bigr)$, and the \emph{set of lengths} of $b$ is $\Lset_M(b) := \{\,|z| : z \in \Z_M(b)\,\}$, where $|z|$ denotes the length of $z$ as a word in $\Z(M)$, that is, the total exponent $\sum_{p} n_p$ in the formal product representation of $z$. We call $M$ a \emph{unique factorization monoid} (UFM) if $|\Z_M(b)| = 1$ for every $b \in M$, and a \emph{half-factorial monoid} (HFM) if $|\Lset_M(b)| = 1$ for every $b \in M$. Every UFM satisfies the ACCP. When $M$ is written additively and is a UFM, we abbreviate the unique length of the factorization of $x$ by $|x|$, writing $|x|_M$ when the ambient monoid is not clear from context.

We will repeatedly use the following standard fact: if $M$ is a UFM and $a, b, c \in M$ with $a \mid_M bc$ and $\gcd_M(a, b) = \mathcal U(M)$ then $a \mid_M c$. This is immediate from the fact that atoms are prime in every UFM~\cite[Theorem~1.1.10]{GH06}.

\subsection{Positive Semidomains}

A \emph{semiring} is a triple $(S, +, \cdot)$ such that $(S, +)$ is a commutative monoid, $(S^\ast, \cdot)$ is a commutative semigroup, multiplication distributes over addition, and $0 \cdot x = x \cdot 0 = 0$ for every $x \in S$. A \emph{positive semidomain} is a subsemiring $S \subseteq \rr_{\ge 0}$ containing $1$. For such an $S$, the monoid $(S, +)$ is automatically reduced and $(S^\ast, \cdot)$ is a monoid. We write $\atoms^+(S)$ and $\atoms^\times(S)$ for the atom sets of $(S, +)$ and $(S^\ast, \cdot)$, respectively. The semidomain $S$ is \emph{bi-UFS} provided that both $(S, +)$ and $(S^\ast, \cdot)$ are UFMs. For $x, y \in S^\ast$, we write $x \mid_{S^\ast} y$ if there exists $m \in S^\ast$ with $xm = y$.

\subsection{Cyclic Algebraic Semidomains}

For $\alpha \in \rr_{>0}$, set
\[
  \nn_0[\alpha] := \{f(\alpha) : f(x) \in \nn_0[x]\}.
\]
When $\alpha$ is algebraic, write $m_\alpha(x) \in \qq[x]$ for its minimal polynomial. Let $\ell$ be the smallest positive integer with $\ell m_\alpha(x) \in \zz[x]$, and split the resulting integer polynomial into its positive and negative parts: $\ell m_\alpha(x) = p(x) - q(x)$, where $p(x), q(x) \in \nn_0[x]$ have disjoint supports. The pair $(p(x), q(x))$ is called the \emph{minimal pair} of $\alpha$. We will make extensive use of the following characterization.

\begin{theorem}[\protect{\cite[Theorem~5.4]{CG22}}]\label{thm:equivalent_conditions_UFM}
Let $\alpha \in \rr_{>0}$ be algebraic of degree $d$ with minimal polynomial $m_\alpha(x)$ and minimal pair $(p(x), q(x))$. Then the following conditions are equivalent.
\begin{enumerate}
  \item[(a)] $(\nn_0[\alpha], +)$ is a UFM.
  \smallskip
  \item[(b)] $(\nn_0[\alpha], +)$ is an HFM.
  \smallskip
  \item[(c)] $\deg m_\alpha(x) = |\atoms(\nn_0[\alpha])|$.
  \smallskip
  \item[(d)] $p(x) = x^d$.
\end{enumerate}
Moreover, when these equivalent conditions hold, $\atoms^+(\nn_0[\alpha]) = \{1, \alpha, \alpha^2, \ldots, \alpha^{d-1}\}$, and every element of $\nn_0[\alpha]$ has a unique representation as $f(\alpha)$ for some $f(x) \in \nn_0[x]$ of degree at most $d - 1$.
\end{theorem}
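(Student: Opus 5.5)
We would prove the cycle $(d)\Rightarrow(a)\Rightarrow(b)\Rightarrow(d)$, and separately $(d)\Rightarrow(c)\Rightarrow(d)$. The ``moreover'' clause comes out of the proof of $(d)\Rightarrow(a)$. The implication $(a)\Rightarrow(b)$ is immediate from the definitions.

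For $(d)\Rightarrow(a)$ and $(d)\Rightarrow(c)$, assume $p(x)=x^d$. The leading coefficient of $\ell m_\alpha(x)$ is then $1$, so $\ell=1$ and $\alpha^d=q(\alpha)$ with $q(x)\in\nn_0[x]$ of degree at most $d-1$. We have $q\neq 0$ because $\alpha>0$.
\begin{itemize}
\item \emph{Existence of a reduced representation.} Starting from any $f(\alpha)$ with $f\in\nn_0[x]$, we repeatedly replace the highest power $\alpha^n$ with $n\ge d$ by $\alpha^{n-d}q(\alpha)$. By induction on the degree, every element equals $g(\alpha)$ for some $g\in\nn_0[x]$ with $\deg g\le d-1$.
\item \emph{Uniqueness.} The elements $1,\alpha,\dots,\alpha^{d-1}$ are $\qq$-linearly independent, so this reduced representation is unique.
\item \emph{Conclusion.} Hence $(\nn_0[\alpha],+)\cong\nn_0^d$ is free with basis $\{1,\alpha,\dots,\alpha^{d-1}\}$. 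This gives $(a)$, gives $(c)$, and identifies the atom set as claimed in the ``moreover'' part.
\end{itemize}

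For $(c)\Rightarrow(d)$, recall that $(\nn_0[\alpha],+)$ is generated by the powers $\alpha^n$, so every additive atom is a power of $\alpha$. Suppose the atoms are $\alpha^{n_1},\dots,\alpha^{n_d}$, exactly $d$ of them.
\begin{itemize}
\item \emph{The monoid is free.} These $d$ atoms generate $\nn_0[\alpha]$ additively, so they span $\qq(\alpha)$, which has dimension $d$. They are therefore a $\qq$-basis, and the monoid is free on them.
\item \emph{The atoms are the first $d$ powers.} We must show $\{n_1,\dots,n_d\}=\{0,\dots,d-1\}$. Suppose $k<d$ is the least exponent for which $\alpha^k$ is not an atom. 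Then $\alpha^k=\sum_j c_j\alpha^{n_j}$ with $\sum_j c_j\ge 2$. By freeness, multiplying by powers of $\alpha$ transports this relation, and we aim to produce a rational relation among $1,\dots,\alpha^{d-1}$ of degree less than $d$, which is a contradiction. Since $1$ is always an atom, minimality of $k$ is available here.
\item \emph{Reading off $p$.} Once the atoms are $1,\dots,\alpha^{d-1}$, writing $\alpha^d$ in this basis gives $\alpha^d=q(\alpha)$ with $q\in\nn_0[x]$. Thus $x^d-q(x)$ is the minimal polynomial, and $p=x^d$.
\end{itemize}

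For $(b)\Rightarrow(d)$, we would use the length function. Half-factoriality makes $L\colon(\nn_0[\alpha],+)\to\nn_0$ a monoid homomorphism with $L(a)=1$ on atoms and $L(s)>0$ for $s\neq0$. It extends to a group homomorphism $\zz[\alpha]\to\zz$, and hence to a $\qq$-linear functional on $\qq(\alpha)$, of the form $x\mapsto\operatorname{Tr}(\beta x)$ for some $\beta$. Evaluating $L$ on the identity $p(\alpha)=q(\alpha)$ and on its shifts $\alpha^n p(\alpha)=\alpha^n q(\alpha)$ gives linear constraints. Combined with positivity on all of $\nn_0[\alpha]$, the aim is to show that if $p$ has a term other than its leading one, or leading coefficient $\ell>1$, then some element has two factorizations of different lengths. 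The natural candidate is an element $\alpha^n p(\alpha)$ written once via $p$ and once via $q$.

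This step, $(b)\Rightarrow(d)$, is where we expect the real difficulty. The main problem is controlling which powers $\alpha^n$ are atoms, since a power that is not an atom spoils a naive length count. Our first attempt would be to choose $n$ so large that every power appearing in $\alpha^n p(\alpha)$ and in $\alpha^n q(\alpha)$ has a known length. We would then compare $p(1)$ with $q(1)$ after this normalization.
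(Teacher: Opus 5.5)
The paper gives no proof of this statement; it is quoted from \cite[Theorem~5.4]{CG22}, so your argument has to stand on its own. Your proofs of $(d)\Rightarrow(a)$, $(d)\Rightarrow(c)$ and of the ``moreover'' clause are fine. However, $(b)\Rightarrow(d)$ is not proved: you only outline a strategy, and that strategy points the wrong way. Pushing $n$ large in $\alpha^n p(\alpha)=\alpha^n q(\alpha)$ moves the exponents \emph{away} from the atoms, and the trace-form description of the length functional plays no role. The missing idea is the shape of $\atoms^+(\nn_0[\alpha])$. Every atom is a power of $\alpha$, and a nontrivial decomposition $\alpha^n=x+y$ yields $\alpha^{n+1}=\alpha x+\alpha y$. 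So the atom set is an initial segment $\{\alpha^n : 0\le n<N\}$ with $N\in\nn_0\cup\{\infty\}$. Under (b) the monoid is atomic, so these atoms generate it and hence span $\qq(\alpha)$, giving $N\ge d$. If $N>d$, then both sides of $p(\alpha)=q(\alpha)$ are already factorizations into atoms, since all exponents are at most $d\le N-1$. Their lengths are $p(1)$ and $q(1)$, and $p(1)-q(1)=\ell\, m_\alpha(1)\neq 0$ unless $\alpha=1$, by irreducibility of $m_\alpha$. So half-factoriality forces $N=d$ (the case $\alpha=1$ is trivial). Atomicity then writes $\alpha^d$ as an $\nn_0$-combination of $1,\dots,\alpha^{d-1}$, i.e.\ $m_\alpha(x)=x^d-q(x)$ and $p(x)=x^d$.

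There is a second, smaller gap in $(c)\Rightarrow(d)$. The sentence ``these $d$ atoms generate $\nn_0[\alpha]$ additively'' presupposes atomicity, which (c) does not give and which genuinely fails for some $\alpha<1$; for example, $\nn_0[1/2]$ has no atoms. For $\alpha\ge 1$ atomicity is automatic, since every nonzero element is at least $1$ and so admits only boundedly many nonzero summands. For $\alpha<1$ you must rule out (c) directly. If $\alpha^n$ is not an atom, a nontrivial decomposition $\alpha^n=\sum_k c_k\alpha^k$ can only use exponents $k>n$, because smaller powers, or a copy of $\alpha^n$ plus something positive, already exceed $\alpha^n$. Dividing by $\alpha^n$ then decomposes $1$. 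So the atom set is either empty or consists of all (pairwise distinct) powers, and is never of size $d$. Your step ``the atoms are the first $d$ powers'' is also only stated as an aim. The initial-segment observation above gives it at once, and your ``reading off $p$'' step then finishes the implication.
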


When $(\nn_0[\alpha], +)$ is a UFM, we call the unique $f(x) \in \nn_0[x]$ of degree at most $d - 1$ representing $a \in \nn_0[\alpha]$ the \emph{additive normal form} of $a$, and the sum of its coefficients the \emph{additive length} of $a$. Writing $m_\alpha(x) = x^d - q_\alpha(x)$,~\Cref{thm:equivalent_conditions_UFM} forces $q_\alpha(x) \in \nn_0[x]$, and $\alpha^d = q_\alpha(\alpha)$ governs all reductions to additive normal form.

\subsection{Nonnegative Matrices and Perron--Frobenius} A matrix $B \in M_r(\rr_{\ge 0})$ is \emph{irreducible} provided that there is no nonempty proper $I \subsetneq \{1, \ldots, r\}$ such that the coordinate subspace $\operatorname{span}_\qq \{e_i : i \in I\}$ is invariant under $B$. It is \emph{primitive} provided $B^k$ is entrywise positive for some $k \in \nn$. By~\cite[Lemma~8.5.4]{HJ13}, an irreducible nonnegative matrix with every positive diagonal entry is primitive.

\begin{theorem}[Perron--Frobenius, \protect{\cite[Theorems~8.4.4 and 8.5.1]{HJ13}}]\label{thm:PF}
Let $B \in M_r(\rr_{\ge 0})$ be irreducible. There exists $\rho(B) \in \rr_{>0}$, the \emph{Perron root} of $B$, with the following properties.
\begin{enumerate}
  \item $\rho(B)$ is a simple eigenvalue of $B$, and $|\lambda| \le \rho(B)$ for every eigenvalue $\lambda$ of $B$.
  \smallskip
  \item There is a strictly positive right eigenvector $w \in \rr^r_{>0}$ with $Bw = \rho(B)w$, and the $\rho(B)$-eigenspace is one-dimensional. The analogous statement holds for left eigenvectors.
  \smallskip
  \item If $B$ is primitive then $|\lambda| < \rho(B)$ for every eigenvalue $\lambda \ne \rho(B)$, and $\rho(B)^{-N} B^N \to (\lambda^\top w)^{-1} w \lambda^\top$ as $N \to \infty$, where $\lambda \in \rr^r_{>0}$ is a left Perron eigenvector. The limit matrix is entrywise positive.
\end{enumerate}
\end{theorem}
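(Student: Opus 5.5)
Since this statement is the classical Perron--Frobenius theorem quoted from~\cite{HJ13}, we simply invoke it. If a self-contained argument were wanted, I would follow the Collatz--Wielandt route, proving (1)--(3) in the order existence, dominance and simplicity, then primitivity.

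\emph{Preliminary fact.} Irreducibility of $B$ gives that $(I+B)^{r-1}$ is entrywise positive. The reason is that the directed graph of $B$ is strongly connected, so any two indices are joined by a path of length at most $r-1$.

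\emph{Existence.} On the simplex $\Delta=\{x\in\rr^r_{\ge0}:\sum x_i=1\}$ consider
\[
  \rho(x)=\min_{x_i>0}(Bx)_i/x_i .
\]
This function is upper semicontinuous only in a weak sense. I would therefore restrict it to the compact set $(I+B)^{r-1}\Delta$, normalized. That set consists of positive vectors, on it $\rho(\cdot)$ is continuous, and $\rho$ does not decrease under $x\mapsto (I+B)^{r-1}x$. Hence a maximizer $w$ exists, and we set $\rho(B):=\rho(w)$.

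Next I show $Bw=\rho(B)w$. Suppose instead that $Bw-\rho(B)w\ge0$ is nonzero. Applying the positive matrix $(I+B)^{r-1}$ makes this vector strictly positive, which contradicts maximality. Then
\[
  w=(1+\rho(B))^{-(r-1)}(I+B)^{r-1}w>0 .
\]

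\emph{Dominance.} Let $Bz=\lambda z$. Then $|\lambda||z|\le B|z|$, so $|\lambda|\le\rho(|z|)\le\rho(B)$.

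\emph{Geometric simplicity.} If $v$ is a real eigenvector for $\rho(B)$ that is not a multiple of $w$, choose $t$ so that $w-tv\ge0$ has a zero coordinate. This vector is again a $\rho(B)$-eigenvector. Applying $(I+B)^{r-1}$ shows it is either $0$ or strictly positive, and it cannot be strictly positive because it has a zero coordinate, so $w=tv$.

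\emph{Algebraic simplicity.} I expect this to be the main technical step. I would use the left Perron vector $\lambda>0$, obtained by applying the above to $B^\top$. Suppose there were a Jordan chain $(B-\rho)u=w$. Then
\[
  \lambda^\top w=\lambda^\top(B-\rho)u=0,
\]
which is impossible since $\lambda^\top w>0$.

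\emph{Primitive case.} Apply the equality case of the triangle inequality to $B^k>0$. If $|\mu|=\rho(B)$ with $Bz=\mu z$, then $|\mu|^k|z|=B^k|z|$ forces $|z|$ to be a Perron vector. Then $|(B^kz)_i|=(B^k|z|)_i$ with all weights positive forces $z=e^{i\theta}|z|$, hence $\mu=\rho(B)$.

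\emph{Convergence.} Decompose
\[
  \rr^r=\operatorname{span}(w)\oplus\ker\lambda^\top .
\]
Both summands are $B$-invariant, and the spectral radius of $B$ on the second is $<\rho(B)$. Therefore $\rho^{-N}B^N$ converges to the projection $w\lambda^\top/(\lambda^\top w)$, which is entrywise positive.
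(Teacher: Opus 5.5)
Invoking the cited Horn--Johnson theorems is exactly what the paper does, since it gives no proof of its own, and your optional Collatz--Wielandt sketch is a correct version of the standard argument: the $(I+B)^{r-1}>0$ compactness fix, the left-vector argument for algebraic simplicity, and the splitting $\mathbb{R}^r=\operatorname{span}(w)\oplus\ker\lambda^\top$ for the primitive limit all go through. The one point you leave implicit is $\rho(B)>0$: from $Bw=\rho(B)w$ with $w>0$, the case $\rho(B)=0$ would force $B=0$, which irreducibility excludes when $r\ge 2$; for $r=1$ the paper's definition of irreducibility vacuously admits $B=[0]$, so the stated positivity actually relies on Horn--Johnson's convention that the $1\times 1$ zero matrix is reducible.
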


\bigskip

\section{Degree-Two Algebraic Monogenic Semidomains}\label{sec:deg2}

In this section, we show that, for a degree-two algebraic number \(\alpha\in\rr_{>0}\), the monogenic semidomain \(S = \nn_0[\alpha]\) is a bi-UFS if and only if \(\alpha\in\nn\). Although this case was previously established by Gotti, Graia, Han, and Liang~\cite[Theorem~3.6]{GGHL26}, we provide a different proof through an analysis of the smallest nontrivial additive atom
\[
  \ell:=\min(\atoms^+(S)\setminus\{1\}).
\]
The idea is to show, by explicit algebraic identities and a sequence of interval exclusions, that \(\ell\) must satisfy \(1<\ell\le \phi\), where \(\phi=(1+\sqrt 5)/2\) is the golden ratio. The remaining minimal polynomials are then ruled out via explicit non-unique factorizations in \(\nn_0[\sqrt 2]\) and \(\nn_0[\phi]\). 

We begin by establishing two lemmas about bi-UFS, which we will use thorought the paper.

\begin{lemma}\label{lem:one_atom}
  Let $S$ be a bi-UFS. Then the following statements hold.
    \begin{enumerate}
      \item $1 \in \atoms^+(S)$.
      \smallskip
      \item For all $x, y \in S$, the equality $|x + y| = |x| + |y|$ holds. In particular, $|x| = 0$ if and only if $x = 0$, and $|x| = 1$ if and only if $x \in \atoms^+(S)$.
      \smallskip
      \item Every multiplicative atom of $S^\ast$ is prime in $S^\ast$.
      \smallskip
      \item Every multiplicative divisor of a unit in $S^\ast$ is a unit in $S^\ast$.
    \end{enumerate}
\end{lemma}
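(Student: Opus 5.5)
The four parts are essentially independent, and I will prove them in order using only that $(S,+)$ and $(S^\ast,\cdot)$ are UFMs. Here $S$ is a positive semidomain, so $(S,+)$ is reduced.

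For (1), I argue by contradiction. Since $1\neq 0$ and $(S,+)$ is atomic, write $1=a_1+\cdots+a_k$ with additive atoms $a_i$. Suppose $k\ge 2$. Then $a_1=a_1\cdot 1=\sum_i a_1a_i$. Each product $a_1a_i$ is nonzero, because $S\subseteq\rr_{\ge0}$ has no zero divisors. Since $(S,+)$ is reduced, this expresses the atom $a_1$ as a sum of $k\ge 2$ nonzero elements, which contradicts atomicity of $a_1$. (Concretely, $a_1 = a_1a_1 + (\text{nonzero})$ is a nontrivial decomposition.) Hence $k=1$ and $1$ is an additive atom.

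For (2), I use additivity of lengths in a UFM. If $z_x$ and $z_y$ are the unique factorizations of $x$ and $y$, then $z_xz_y$ is a factorization of $x+y$. By uniqueness it is the factorization of $x+y$, so $|x+y|=|z_x|+|z_y|$. The element $0$ has the empty factorization, so $|0|=0$. Conversely, any nonzero element is a nonunit in the reduced monoid $(S,+)$, so its factorization is nonempty and its length is positive. Finally, an element of length $1$ is by definition an atom, and every atom has the one-letter factorization, so its length is $1$.

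For (3), I invoke the standard fact quoted in the preliminaries: atoms of a UFM are prime~\cite[Theorem~1.1.10]{GH06}. I apply it to the UFM $(S^\ast,\cdot)$.

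For (4), suppose $xy=u$ with $u\in\mathcal U(S^\ast)$. Then $x(yu^{-1})=1$, so $x$ is a unit. This holds in any monoid.

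I expect no real obstacle. The only step needing care is (1), where one must remember that reducedness of $(S,+)$ and the absence of zero divisors are what make the decomposition of $a_1$ nontrivial.
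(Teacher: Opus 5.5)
Your proof is correct and follows the paper's argument in all four parts. Parts (2)--(4) match the paper essentially verbatim. In (1) you use the same device: multiply $1=a_1+\cdots+a_k$ by an additive atom, then invoke reducedness of $(S,+)$ and the absence of zero divisors. The only difference is that you multiply by $a_1$ itself, whereas the paper first handles $S=\nn_0$ separately and then picks an atom $b\neq 1$; your choice makes that case split unnecessary.
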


\begin{proof}
    \textit{(1)} First, if $S=\nn_0$ then the claim is shown true. Otherwise, we can assume that there exists an atom $b\in\atoms^+(S)$ satisfying $b\neq1$. Let $1 = a_1 + a_2 + \cdots + a_n$ be a decomposition of $1$ into additive atoms. Then $b = ba_1 + ba_2 +\cdots + ba_n$. Since each of these factors are nonzero and $S$ is a reduced monoid with $0$ as its only additive unit, we must have $n=1$ to avoid a contradiction. Thus, $1$ must be an additive atom.
    \smallskip
    
    \textit{(2)} Concatenating the unique atomic factorization of $x$ and $y$ produces an atomic factorization of $x+y$ with length $|x| + |y|$, and uniqueness in $(S,+)$ forces this to be the only atomic factorization of $x+y$. Thus, $|x+y| = |x| + |y|$. The results about $|x| = 0$ and $|x| = 1$ follow from the definitions of $0$ and atoms.   
    \smallskip
    
    \textit{(3)} This is the standard fact that atoms are prime in every UFM; see~\cite[Theorem~1.1.10]{GH06}.
    \smallskip
    
    \textit{(4)} If $u \in \mathcal U(S^\ast)$ and $u = xy$ in $S^\ast$ then $1 = u u^{-1} = xy u^{-1} = x(yu^{-1})$, meaning that $yu^{-1}$ is the multiplicative inverse of $x$. Hence $x \in \mathcal U(S^\ast)$, and similarly $y \in \mathcal U(S^\ast)$.
\end{proof}

Next, we prove that, for a bi-UFS $S$, the inequality $|xy| \ge |x||y|$ holds for elements $x,y \in S$.

\begin{lemma}\label{lem:length_inequality}
  Let $S$ be a bi-UFS. For $x,y\in S$, the inequality $|xy| \ge |x||y|$ holds.
\end{lemma}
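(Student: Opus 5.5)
We prove $|xy| \ge |x||y|$ by expanding the product of the two additive factorizations and using additivity of the length from \Cref{lem:one_atom}(2).

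If $x=0$ or $y=0$, both sides are $0$, so assume $x,y\in S^\ast$. Write the unique additive factorizations
\[
x = a_1 + \cdots + a_m,\qquad y = b_1 + \cdots + b_n,
\]
with $a_i,b_j\in\atoms^+(S)$. Here $m=|x|$ and $n=|y|$. Distributivity gives
\[
xy = \sum_{i=1}^m\sum_{j=1}^n a_ib_j .
\]
Each product $a_ib_j$ is a nonzero element of $S$, since $S$ is a semidomain (for positive semidomains this is immediate, as $a_i,b_j>0$). Applying \Cref{lem:one_atom}(2) repeatedly to this finite sum yields
\[
|xy| = \sum_{i,j} |a_ib_j| .
\]
Because each $a_ib_j\neq 0$, the same lemma gives $|a_ib_j|\ge 1$. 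Summing over the $mn$ pairs $(i,j)$ then gives $|xy|\ge mn=|x||y|$.

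No step here is a real obstacle. The only points to check are that products of nonzero elements are nonzero, which is what the semidomain hypothesis guarantees, and that additive lengths are additive over arbitrary finite sums, which follows from \Cref{lem:one_atom}(2) by induction on the number of summands.
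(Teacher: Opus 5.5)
Your proposal is correct and follows essentially the same route as the paper: you expand $xy=\sum_{i,j}a_ib_j$ into $mn$ nonzero summands and observe that each summand contributes at least one atom. Your explicit appeal to the additivity of length in \Cref{lem:one_atom}(2), and to the absence of zero divisors rather than positivity, just makes the paper's one-line justification precise.
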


\begin{proof}
We may assume that $x, y \in S^*$. Now let \[x = a_1 + a_2 + \dots + a_m \;\hspace{.4 cm} \text{ and }\;\hspace{.4 cm} y = b_1 + b_2 + \dots + b_n\] be factorizations, which exist since $(S, +)$ is a UFM. Then $xy = \sum_{i,j} a_i b_j$ with each $a_i b_j > 0$, so each term's additive atomic decomposition contributes at least one atom into the factorization of $xy$. Hence $|xy| \ge mn = |x||y|$.
\end{proof}

\begin{corollary}\label{cor:divisor_of_atom}
    Let $S$ be a bi-UFS. If $x\in S$ is a multiplicative divisor of an atom $a \in \atoms^+(S)$ then $x\in\atoms^+(S)$. Moreover, every unit of $S^*$ is an element of $\atoms^+(S)$.
\end{corollary}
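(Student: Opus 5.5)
The plan is to combine the length inequality of \Cref{lem:length_inequality} with the characterization of additive lengths in \Cref{lem:one_atom}(2).

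Suppose $x \in S$ multiplicatively divides an additive atom $a \in \atoms^+(S)$, so that $a = xy$ for some $y \in S$. Since $a \neq 0$, both $x$ and $y$ are nonzero. Then $|x| \ge 1$ and $|y| \ge 1$ by \Cref{lem:one_atom}(2). \Cref{lem:length_inequality} gives
\[
  1 = |a| = |xy| \ge |x|\,|y| \ge 1,
\]
so equality holds throughout and $|x| = |y| = 1$. By \Cref{lem:one_atom}(2) once more, $x \in \atoms^+(S)$, as is $y$.

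For the second assertion, let $u$ be a unit of $S^\ast$. Then $u \cdot u^{-1} = 1$, so $u$ is a multiplicative divisor of $1$. By \Cref{lem:one_atom}(1), $1 \in \atoms^+(S)$. The first part, applied with $a = 1$, then gives $u \in \atoms^+(S)$.

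There is no real obstacle here. The only care needed is to check nonvanishing, so that lengths are at least $1$; this holds because $S$ has no zero divisors as a subsemiring of $\rr_{\ge 0}$ (or of $\cc$).
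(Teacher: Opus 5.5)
Your proof is correct and follows the paper's own argument: \Cref{lem:length_inequality} forces $|x|=|y|=1$, and a unit divides the additive atom $1$, so the first part applies to it. Your explicit check that $x,y\neq 0$ fills in a step the paper leaves implicit, though it follows directly from $xy=a\neq 0$, not from the absence of zero divisors.
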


\begin{proof}
Since $a \in \atoms^+(S)$ and $a = xy$ for some $y\in S$, then $1 = |a| = |xy| \ge |x| |y|$ by~\Cref{lem:length_inequality}. Hence $|x| = |y| = 1$. Every unit in $S^*$ is in $\atoms^+(S)$ since it multiplicatively divides $1$, and $1 \in \atoms^+(S)$ by~\Cref{lem:one_atom}.
\end{proof}

\subsection[Absence of elements between 0 and 1]{Absence of elements in \texorpdfstring{$(0,1)$}{(0,1)}} The main result of this subsection is that a bi-UFS $S$ contains no element strictly between $0$ and $1$ (\Cref{lem:no_elements_between_zero_and_one}). We first rule out small additive atoms and then show that every element of $S$ is at least $1$. Throughout, let $d$ denote the unique positive solution of $d^3 + d = 1$, noting that $d \in (0,1)$. We first show that no multiplicative atom of $1 + u$ can divide $1 + u^3$.

\begin{lemma}\label{lem:atomic_divisor_1+u}
  Let $S$ be a bi-UFS. If $u \in \atoms^+(S)$ and $0 < u < 1$ then no atomic divisor of $1 + u$ divides $1 + u^3$.
\end{lemma}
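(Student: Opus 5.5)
The plan is to turn the assumed common divisibility into a real-number identity and then contradict additive unique factorization. Suppose, for contradiction, that $a\in\atoms^\times(S)$ divides both $1+u$ and $1+u^3$ in $S^\ast$. Write $1+u=ab$ and $1+u^3=ac$ with $b,c\in S^\ast$. Over $\rr$ we have $1+u^3=(1+u)(1-u+u^2)$. Cancelling $a$ gives $c=b(1-u+u^2)$, which rearranges to the identity
\[
  c+ub \;=\; b+u^2b \qquad\text{in } S,
\]
whose two sides involve only elements of $S$. I would then compare the unique additive factorizations of the two sides.

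The first step is to pin down $a$ and $b$ using lengths. By \Cref{lem:one_atom}(2) and since $u\in\atoms^+(S)$, we have $|1+u|=2$. By \Cref{lem:length_inequality}, $|a||b|\le 2$. So either $|b|=1$ (and then $|a|\le 2$), or $|a|=1$ and $|b|\le 2$.

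In the case $|a|=1$, \Cref{cor:divisor_of_atom} says that $a$ divides no additive atom unless $a$ is itself a unit. Since $a$ is a multiplicative atom and not a unit, I would use the real sizes $u<1<1+u$ together with $ab=1+u$ to force $b$ to be one of $1$, $u$, $1+u$, or a product such as $u\cdot$(unit). Each of these sub-cases can be checked directly against $c=b(1-u+u^2)$. For instance, $b=1$ gives $c=1-u+u^2$, so $c+u=1+u^2$. Unique additive factorization then forces the atom $u$ on the left to equal $1$ or $u^2$ on the right, or to appear inside the factorization of $u^2$. Both alternatives are impossible: $u\ne 1$, and $u^2<u$ while every summand in a factorization of $u^2$ is at most $u^2<u$.

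The general mechanism behind that last check is an extremal-atom comparison in $c+ub=b+u^2b$. Write $b=\sum_j b_j$ with $b_j\in\atoms^+(S)$. Every additive atom appearing in the factorization of $ub_j$ (respectively $u^2b_j$) is at most $ub_j<b_j$ (respectively $u^2 b_j$). Let $M=\max_j b_j$. Then $M$ occurs on the right-hand side, but on the left it cannot come from $ub$, so it must be an atom of $c$. I would peel off such matched atoms inductively, using the lengths $|c|$, $|ub|\ge|b|$ and $|u^2b|\ge |b|$ from \Cref{lem:length_inequality}. The goal is to show that every atom of $b$ is absorbed into $c$. That would make $c-b=u^2b-ub=-u(1-u)b$ negative, while the absorption would leave it as a sum of atoms, which is a contradiction.

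The main obstacle is this peeling argument. It must handle the possibility that atoms produced from $u^2 b$ on the right coincide with atoms from $ub$ or $c$ on the left, so that the greedy matching is not clean. If the maximum-atom induction does not close, my fallback is to exploit the multiplicative primality of $a$ (\Cref{lem:one_atom}(3)) a second time. I would apply the same construction to $1+u^3$ and the factorization $(1+u)^3=(1+u^3)+3u(1+u)$, obtaining additional identities that restrict $b$ and $c$ further.
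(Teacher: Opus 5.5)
Your identity $c+ub=b+u^2b$ is correct and would suffice, but the proposal does not close the argument. In the case $|a|=1$ you misquote \Cref{cor:divisor_of_atom}. It only says that a multiplicative divisor of an additive atom is itself an additive atom; it does not say that a non-unit cannot divide an additive atom. Nothing in the hypotheses forces $b$ into a list like $\{1,u,1+u,\dots\}$, so that step is unsupported. The peeling argument is then left open, and it does stall when $|b|=2$. If $b=b_1+b_2$ with $b_1\ge b_2$, then after matching $b_1$ with an atom of $c$, the remaining atom $b_2$ on the right may be matched with an atom of $ub_1$ on the left, and the greedy comparison yields nothing. The fallback via $(1+u)^3$ is not developed.

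The missing step is to prove that the cofactor $b$ is an additive atom, which removes the bad case outright. Since $|a||b|\le |1+u|=2$, the only alternative is $|b|=2$. Writing $b=b_1+b_2$ with $b_1,b_2\in\atoms^+(S)$ gives $1+u=ab_1+ab_2$ with both summands nonzero. By \Cref{lem:one_atom}(2) each $ab_i$ is then an additive atom, and unique factorization forces $\{ab_1,ab_2\}=\{1,u\}$. Hence $ab_i=1$ for some $i$, so $a$ is a unit, a contradiction.

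With $|b|=1$, your own maximal-atom mechanism finishes at once, provided you notice that $c=b(1-u+u^2)<b$ because $0<u<1$. The atom $b$ occurs on the right of $c+ub=b+u^2b$. Every atom in the factorization of the left side is at most $\max(c,ub)<b$, so $b$ cannot occur there. This is exactly the paper's proof. In your notation, the paper uses the equivalent identity $b(1+u^3)=c(1+u)$, that is, $b+bu^3=c+cu$, together with $c<b$ coming from $1+u^3<1+u$.
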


\begin{proof}
  Let $c_1 = 1 + u$ and $c_2 = 1 + u^3$. Let $p$ be an atom in $S^*$ dividing $c_1$, so $c_1 = pa$. We first show that $a \in \atoms^+(S)$. \Cref{lem:length_inequality} gives the inequality $|c_1| = 2 = |pa| \ge |p||a|$. If $a$ is a multiplicative unit then it is already an atom in the additive monoid by~\Cref{cor:divisor_of_atom}. If $a$ is not a multiplicative unit then $|a| = 2$. In this case, we may let $a = b + c$ where $b$, $c \in \atoms^+(S)$. Then $1 + u = p(b + c) = pb + pc$. Since both $1$ and $u$ are additive atoms of $S$, one of $pb$ or $pc$ must equal $1$, which means that $p$ is a unit in $S^*$, contradicting the fact that $p$ is a multiplicative atom. Therefore $|a| = 1$, and $a \in \atoms^+(S)$.

  We now prove the main part of the lemma. Suppose that $p$ divides $c_2$. Let $c_2 = pb$. Since $c_2 < c_1$, it follows that $b < a$. Multiplying both sides of $c_2 = pb$ by $a$ yields $a c_2 = b c_1$. Expanding gives $a(1 + u^3) = b(1 + u)$, so we obtain $a + a u^3 = b + b u$. The left-hand side has the atom $a$ in the sum. However, $b < a$ and $bu < b < a$, so $a$ does not appear anywhere on the right-hand side. This contradicts unique factorization, so $p \nmid_{S^\ast} 1 + u^3$.
\end{proof}

From~\Cref{lem:atomic_divisor_1+u}, we prove a divisibility relation with a useful algebraic identity that will continue to be used throughout.

\begin{lemma}\label{lem:1+u_divides_1+u+u^2}
  Let $S$ be a bi-UFS. For every $u \in (0,1)$ with $u \in \atoms^+(S)$, the divisibility relation $1 + u \mid 1 + u + u^2$ holds.
\end{lemma}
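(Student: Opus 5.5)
The plan is to use the polynomial identity
\[
  (1+u+u^2)(1+u^3) = (1+u)(1+u^2+u^4),
\]
which holds because $1+u^3=(1+u)(1-u+u^2)$ and $(1+u+u^2)(1-u+u^2)=1+u^2+u^4$. All four factors lie in $S^\ast$, since they are nonzero $\nn_0$-polynomial expressions in $u\in S$. The identity therefore says that $1+u \mid_{S^\ast} (1+u+u^2)(1+u^3)$, with cofactor $1+u^2+u^4\in S^\ast$.

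The goal is to remove the factor $1+u^3$ from this divisibility, and \Cref{lem:atomic_divisor_1+u} is exactly what allows this.

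\begin{enumerate}
  \item If $1+u$ is a unit of $S^\ast$, it trivially divides $1+u+u^2$, and we are done.
  \item Otherwise, $(S^\ast,\cdot)$ is a UFM, so we write $1+u$ as a product of multiplicative atoms $p_1\cdots p_k$, up to a unit.
  \item By \Cref{lem:atomic_divisor_1+u}, no $p_i$ divides $1+u^3$. Hence $1+u$ and $1+u^3$ share no atom. In a UFM this means $\gcd_{S^\ast}(1+u,1+u^3)=\mathcal U(S^\ast)$, because any common nonunit divisor would have an atomic factor dividing both.
  \item We then invoke the standard fact recalled in \Cref{sec:prelim}: if $a \mid bc$ and $\gcd(a,b)$ is trivial, then $a\mid c$. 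Taking $a=1+u$, $b=1+u^3$, $c=1+u+u^2$ gives $1+u \mid_{S^\ast} 1+u+u^2$.
\end{enumerate}

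If a more hands-on argument is preferred, step 4 can be replaced by an induction on $k$. Since $p_1$ is prime by \Cref{lem:one_atom}(3), it divides $(1+u+u^2)(1+u^3)$ but not $1+u^3$, so $p_1 \mid 1+u+u^2$. Writing $1+u+u^2=p_1w$ and cancelling $p_1$ (the multiplicative monoid of a positive semidomain is cancellative) yields
\[
  p_2\cdots p_k\,(1+u^2+u^4)=w(1+u^3).
\]
Repeating with $p_2,\dots,p_k$, each time using that $p_i\nmid 1+u^3$, shows that $p_1\cdots p_k$ divides $1+u+u^2$.

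The only substantive input is \Cref{lem:atomic_divisor_1+u}; the remaining steps are routine, so I expect no real obstacle. The one point to check carefully is the coprimality deduction in step 3, namely that absence of common atoms gives trivial gcd. This follows directly from unique factorization in $(S^\ast,\cdot)$.
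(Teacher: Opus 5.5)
Your proposal is correct and follows the paper's proof essentially verbatim. It uses the same identity $(1+u^3)(1+u+u^2)=(1+u)(1+u^2+u^4)$, the same appeal to \Cref{lem:atomic_divisor_1+u} together with atomicity of $S^\ast$ to get $\gcd_{S^\ast}(1+u,1+u^3)=\mathcal U(S^\ast)$, and the same coprime-cancellation fact recalled in \Cref{sec:prelim}. Your separate unit case and your prime-by-prime induction are harmless unpackings of that fact; the unit case cannot actually occur, since units are additive atoms by \Cref{cor:divisor_of_atom} while $1+u$ is not.
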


\begin{proof}
  We use the identity
  \[
    (1 + u^3)(1 + u + u^2) = (1 + u)(1 + u^2 + u^4).
  \]
  Let $c_1 = 1 + u$,\hspace{.2 cm} $c_3 = 1 + u + u^2$, \hspace{.1 cm} and \hspace{.2 cm} $c_2 = 1 + u^3$. By~\Cref{lem:atomic_divisor_1+u}, no multiplicative atom dividing $c_1$ divides $c_2$, so because $S^*$ is atomic, we know $\gcd_{S^\ast}(c_1, c_2) = \mathcal U(S^*)$. Since the identity yields $c_1 \mid_{S^\ast} c_2 c_3$, it follows that $c_1$ divides $c_3$ as all UFMs have the D-property. Therefore $1 + u \mid_{S^\ast} 1 + u + u^2$. 
\end{proof}

When $u$ is small enough, \Cref{lem:1+u_divides_1+u+u^2} yields the stronger conclusion that $1 + u$ divides $u^2$. The threshold $d$ enters through the inequality $u + u^3 < 1$.

\begin{lemma}\label{lem:1+u_divides_u^2}
  Let $S$ be a bi-UFS positive semidomain. If $u \in \atoms^+(S)$ and $0 < u < d$ then $1 + u \mid_{S^\ast} u^2$.
\end{lemma}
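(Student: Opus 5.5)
The plan is to use the divisibility $1+u \mid_{S^\ast} 1+u+u^2$ from \Cref{lem:1+u_divides_1+u+u^2}, and then show that the cofactor has the additive atom $1$ as a summand. Concretely, write
\[
1+u+u^2 = (1+u)m \qquad\text{with } m\in S^\ast.
\]
The goal is to prove that $m = 1+t$ for some $t\in S$. Once this holds, expanding gives $(1+u)+(1+u)t = 1+u+u^2$. Additive cancellation then yields $(1+u)t = u^2$, and $t\neq 0$ because $u^2\neq 0$. So $t\in S^\ast$ and $1+u\mid_{S^\ast} u^2$.

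To locate the summand $1$ in $m$, take the additive atomic factorization $m=b_1+\cdots+b_k$ with $b_i\in\atoms^+(S)$. Then
\[
1+u+u^2=\sum_i b_i+\sum_i u b_i .
\]
Replacing each $ub_i$ by its own atomic factorization gives an atomic factorization of $1+u+u^2$. On the other hand, $1$ and $u$ are additive atoms by \Cref{lem:one_atom}(1) and the hypothesis. Hence $1+u+(\text{factorization of }u^2)$ is an atomic factorization of $1+u+u^2$ that contains the atom $1$. By uniqueness in $(S,+)$, the atom $1$ must occur on the right side. So either $b_i=1$ for some $i$, which is what we want, or $1$ occurs among the atoms of $ub_j$ for some $j$.

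The main step is ruling out the second alternative using the size condition $u<d$. If $1$ is an atomic summand of $ub_j$, then $ub_j\ge 1$, so $b_j\ge 1/u$ and $m\ge b_j\ge 1/u$. This gives
\[
1+u+u^2=(1+u)m\ge \frac{1+u}{u}=\frac1u+1 .
\]
That inequality is equivalent to $u^2+u^3\ge 1$. But $0<u<d<1$ gives $u^2+u^3<u+u^3<d+d^3=1$, since $x\mapsto x+x^3$ is increasing. This is a contradiction, so some $b_i$ equals $1$ and we may take $t=m-1=\sum_{j\ne i}b_j\in S$.
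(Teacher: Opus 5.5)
Your proof is correct and takes essentially the same approach as the paper. You start from $1+u \mid_{S^\ast} 1+u+u^2$, use unique additive factorization and a size bound from $u<d$ to force the atom $1$ into the cofactor rather than into $u$ times the cofactor, and then cancel. The one cosmetic difference is that the paper bounds $uq<u(1+u^2)=u+u^3<1$ directly, whereas you argue by contradiction from $ub_j\ge 1$, which reduces to the slightly sharper condition $u^2+u^3<1$.
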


\begin{proof}
Note that $d \in (0, 1)$. By~\Cref{lem:1+u_divides_1+u+u^2}, there is some $q \in S$ such that $1 + u + u^2 = (1 + u) q= q + u q$. Since $u < d$, it follows that $u + u^3 < 1$. Additionally,
  \[
    q = \frac{1 + u + u^2}{u + 1} = 1 + \frac{u^2}{1 + u} < 1 + u^2.
  \]
  Therefore $u q < u (1 + u^2) = u + u^3 < 1$.
  
  We examine the unique additive atomic decompositions of the left-hand and right-hand sides of the equality $1 + u + u^2 = q + u q$. The additive factorization of $u q$ will not include $1$ since $u q < 1$. Since the atom $u$ satisfies $u < 1$ and $u^2 < 1$, the left-hand side contains exactly one copy of the atom $1$, which must remain true for the atomic decomposition of the right-hand side, due to uniqueness. Therefore the unique copy of $1$ on the right-hand side must come from $q$, so $q = 1 + s$ for some nonzero $s \in S$. Substituting into $1 + u + u^2 = (1 + u) q$ yields
  \[
    1 + u + u^2 = (1 + u)(1 + s) = 1 + u + (1 + u) s,
  \]
  so $u^2 = s(1 + u)$. Therefore $1 + u \mid_{S^\ast} u^2$.
\end{proof}

We now rule out nontrivial multiplicative units. The
argument applies~\Cref{lem:1+u_divides_u^2} to a large power of a
hypothetical unit smaller than $1$.

\begin{lemma}\label{lem:multiplicative_monoid_reduced}
  Let $S$ be a bi-UFS. Then $S^\ast$ is reduced.
\end{lemma}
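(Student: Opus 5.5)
We argue by contradiction. Suppose $S^\ast$ has a unit $v \neq 1$. Then $v^{-1} \in S^\ast$ is also a unit, and since $v \neq 1$ exactly one of $v, v^{-1}$ lies in $(0,1)$. We rename that one $u$, so $u$ is a multiplicative unit with $0<u<1$. For every $n \in \nn$, the power $u^n$ is again a unit of $S^\ast$. By \Cref{cor:divisor_of_atom}, every unit of $S^\ast$ lies in $\atoms^+(S)$, so each $u^n$ is an additive atom. Here we use that $S$ is a positive semidomain, as throughout this section, so the order arguments apply.

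Since $0<u<1$, we have $u^n \to 0$. We fix $n$ large enough that $0 < u^n < d$, where $d$ is the positive root of $d^3+d=1$. The element $w := u^n$ then satisfies the hypotheses of \Cref{lem:1+u_divides_u^2}: it is an additive atom and $0<w<d$. That lemma gives $1 + w \mid_{S^\ast} w^2$.

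Now $w^2 = u^{2n}$ is a unit of $S^\ast$. By \Cref{lem:one_atom}(4), every multiplicative divisor of a unit is a unit, so $1+w$ is a unit of $S^\ast$. Applying \Cref{cor:divisor_of_atom} again, $1+w \in \atoms^+(S)$, so $|1+w| = 1$. On the other hand, $1$ and $w$ are both additive atoms (\Cref{lem:one_atom}(1)), so \Cref{lem:one_atom}(2) gives $|1+w| = |1|+|w| = 2$. This contradiction shows $\mathcal U(S^\ast)=\{1\}$.

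The only delicate point is obtaining a unit below the threshold $d$. This is handled by passing to the inverse if necessary and then taking a large power, using that powers of units remain units and hence remain additive atoms. The remaining steps are direct applications of the earlier lemmas.
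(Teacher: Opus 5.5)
Your proof is correct and follows the paper's argument essentially step for step: you pass to a unit in $(0,1)$, take a power below $d$, apply \Cref{lem:1+u_divides_u^2}, and conclude that $1+w$ is a unit and hence an additive atom. Your length count $|1+w|=2$ is the same contradiction the paper reaches by noting that $1$ is a proper additive divisor of $1+u$.
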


\begin{proof}
  Suppose for the sake of contradiction that $v \in S^\ast$ is a unit with $v \ne 1$. If $v > 1$, we may replace $v$ with $v^{-1}$, which would also be a unit, so we may assume without loss of generality that $0 < v < 1$. Since $v$ is a unit, so $v^n$ is also a unit for every positive integer $n$. By~\Cref{cor:divisor_of_atom}, every multiplicative unit is an atom in the additive monoid, so $v^n \in \atoms^+(S)$ for every $n \ge 1$. Choose $n$ large enough that $0 < v^n < d$, and set $u = v^n$. By~\Cref{lem:1+u_divides_u^2}, the element $1 + u$ divides $u^2$. However, $u^2=v^{2n}$ is a unit, and every divisor of a unit is a unit, so $1 + u$ is a unit and hence an additive atom, again by~\Cref{cor:divisor_of_atom}. However, $1$ is a proper additive divisor of $1+u$, a contradiction. Therefore the unit of $S^\ast$ is $1$.
\end{proof}

We now establish a lower bound on additive atoms by iterating~\Cref{lem:1+u_divides_u^2} along the multiplicative atoms of $1 + u$.

\begin{lemma}\label{lem:atoms_are_at_least_d}
  Let $S$ be a bi-UFS. Then $\atoms^+(S)\subseteq[d,\infty)$ for some $d \in \rr_{>0}$.
\end{lemma}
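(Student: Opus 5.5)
The plan is to argue by contradiction with $d$ the fixed root of $d^3+d=1$ from the start of this subsection: assume some $u \in \atoms^+(S)$ satisfies $0<u<d$. By \Cref{lem:1+u_divides_u^2}, $1+u \mid_{S^\ast} u^2$. By \Cref{lem:multiplicative_monoid_reduced}, $S^\ast$ is reduced. Also $1+u \neq 1$, and $1+u$ is not an additive atom, while every unit is one by \Cref{cor:divisor_of_atom}; so $1+u$ has at least one multiplicative atom $p$ as a factor.

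The first step reuses the proof of \Cref{lem:atomic_divisor_1+u}: writing $1+u=pa$, the cofactor $a$ lies in $\atoms^+(S)$. Since $p$ is prime in $S^\ast$ (\Cref{lem:one_atom}(3)) and $p \mid_{S^\ast} u^2$, we get $p\mid_{S^\ast} u$, say $u=pt$ with $t \in S^\ast$. Because $u$ is an additive atom, \Cref{lem:length_inequality} gives $1=|u|\ge |p||t|$, so $p,t\in\atoms^+(S)$ as well. Comparing $pa=1+pt$ shows $a>t$, and in real numbers $a=t+p^{-1}$.

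The second step turns this into a descent. If $p>1$, then $t=u/p<u<d$ is again an additive atom below $d$, and it satisfies the same identities. If $p\le 1$, then $a=(1+u)/p\ge 1+u$, which I will try to rule out directly. The idea is to compare the additive normal forms of $pa$ and $p(a-t)$ and use uniqueness to force $p\mid_{S^\ast}1$, contradicting that $S^\ast$ is reduced. Granting $p>1$, iterating the first step produces a strictly decreasing sequence $u=u_0>u_1>u_2>\cdots$ of additive atoms in $(0,d)$, with $u_{k}=p_k u_{k+1}$ and each $p_k>1$ a multiplicative atom dividing $1+u_k$. In particular $u$ is divisible in $S^\ast$ by $p_0p_1\cdots p_k$ for every $k$. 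This contradicts ACCP, equivalently the finiteness of the unique multiplicative factorization of $u$, since $S^\ast$ is a UFM: the chain of principal ideals generated by $u_0, u_1, u_2, \dots$ is strictly ascending because no $p_k$ is a unit. This contradiction shows every additive atom is at least $d$.

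The main obstacle is the case distinction on the size of $p$, i.e.\ guaranteeing that the descent genuinely produces a new small additive atom rather than stalling when $p\le1$. If the direct argument for $p\le 1$ fails, I would fall back on the full multiplicative factorization $1+u=p_1\cdots p_k$. Each $p_i$ divides $u$, and by \Cref{lem:length_inequality} each $p_i$ is an additive atom. Since their product $1+u$ exceeds $1$, at least one $p_i$ exceeds $1$, and I would run the descent along that atom instead.
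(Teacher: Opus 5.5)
Your argument is correct, and once you use your fallback (factor $1+u=p_1\cdots p_k$ and descend along a factor $p_i>1$) it is exactly the paper's proof: $p_i\mid_{S^\ast}u$ by primality, $u=p_iu_1$ with $u_1\in\atoms^+(S)\cap(0,d)$ by \Cref{cor:divisor_of_atom}, and iterating gives a strictly ascending chain of principal ideals that contradicts ACCP. Start with that choice of $p_i$ directly and drop the attempted treatment of the case $p\le 1$; it is unnecessary, and as sketched it does not go through, because $a-t=1/p$ is not known to lie in $S$.
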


\begin{proof}
  Assume for the sake of contradiction that $u \in \atoms^+(S)$ and $0 < u < d$. By~\Cref{lem:1+u_divides_u^2}, $1 + u \mid_{S^\ast} u^2$. There are no nontrivial multiplicative units by~\Cref{lem:multiplicative_monoid_reduced}, so since $1+u\neq1$, we can factor $1 + u$ uniquely into multiplicative atoms in $1 + u = p_1 p_2 \dots p_k$. Since $1 + u > 1$, then there exists at least one $1\leq i\leq k$ such that $p_i>1$. Since $p_i$ divides $1 + u$ and $1 + u$ divides $u^2$, it follows that $p_i$ divides $u^2$. Since $S^*$ is a UFM, the atom $p_i$ must also be prime, so it follows that $p_i \mid_{S^\ast} u$. Thus $u = p_i u_1$ for some $u_1\in S$. Since $u \in \atoms^+(S)$, we know by~\Cref{cor:divisor_of_atom} that $u_1 \in \atoms^+(S)$. Since $p_i > 1$, it follows that $0 < u_1 < u < d$. Additionally, since $p_i$ is not a multiplicative unit, we know that $u_1$ is a proper divisor of $u$, or equivalently, $uS^*\subsetneq u_1S^*$.
  
  Since $u_1<d$, we know by~\Cref{lem:1+u_divides_u^2} that $1+u_1$ divides $u_1^2$. Because $1+u_1>1$, we may repeat the same argument with $u_1$ to obtain a $u_2 \in \atoms^+(S)$ such that $u_1S^*\subsetneq u_2S^*$ and $0 < u_2 < u_1 < d$. Continuing this yields a strictly ascending chain of principal ideals
  \[
    u S^* \subsetneq u_1 S^* \subsetneq u_2 S^* \subsetneq \cdots,
  \]
  which contradicts the ACCP in $S^*$.
\end{proof}

The main result of the subsection is now immediate.

\begin{lemma}\label{lem:no_elements_between_zero_and_one}
  Let $S$ be a bi-UFS. Then there are no elements between $0$ and $1$ in $S$.
\end{lemma}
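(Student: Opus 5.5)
Suppose, for a contradiction, that some $x\in S$ satisfies $0<x<1$. Since $(S,+)$ is a UFM, write $x=a_1+\cdots+a_m$ with $a_i\in\atoms^+(S)$. Every $a_i$ satisfies $0<a_i\le x<1$, so there is an additive atom $u$ with $0<u<1$. It therefore suffices to show that no additive atom lies in $(0,1)$; the statement then follows at once.

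Fix such an atom $u\in(0,1)$ and consider its powers $u^n$ for $n\ge1$. Each $u^n$ is a nonzero element of $S$ with $u^n\to0$. Choose $n$ large enough that $u^n<d$, where $d$ is the constant from \Cref{lem:atoms_are_at_least_d}, below which $S$ has no additive atoms. Decompose $u^n$ additively into atoms. Each atom $b$ appearing in the decomposition satisfies $0<b\le u^n<d$, which contradicts \Cref{lem:atoms_are_at_least_d}. So no element of $S$ lies in $(0,d)$, and in particular no additive atom does.

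This alone does not yet exclude atoms in $[d,1)$, and that is the main obstacle. The powers trick closes this gap: if any $y\in S$ satisfies $0<y<1$, then $y^n\in S$ for all $n$, and $y^n\in(0,d)$ once $n$ is large. By the previous paragraph this is impossible. Hence $S\cap(0,1)=\emptyset$. The only care needed is that \Cref{lem:atoms_are_at_least_d} is applied with its specific constant $d$ (the root of $d^3+d=1$), or with any positive lower bound it provides. The argument uses only closure of $S$ under multiplication and the existence of additive factorizations, so it is essentially immediate from \Cref{lem:atoms_are_at_least_d}.
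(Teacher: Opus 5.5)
Your argument is correct and is essentially the paper's proof: raise an element of $S\cap(0,1)$ to a power landing in $(0,d)$, then note that its additive atomic decomposition contains an atom below $d$, contradicting \Cref{lem:atoms_are_at_least_d}. The reduction to atoms in your first paragraph and the ``obstacle'' in your third paragraph are unnecessary, since the contradiction in your second paragraph already applies verbatim to any $x\in S\cap(0,1)$, which is exactly how the paper argues.
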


\begin{proof}
  Assume for the sake of contradiction that $0 < \alpha < 1$ and $\alpha \in S$. We may choose $n$ large enough that $0 < \alpha^n < d$. Since $\alpha^n \in S$ and $(S, +)$ is atomic, there must exist an additive divisor of $\alpha^n$ that is an additive atom. Such an atom would be at most $\alpha^n$ and hence less than $d$, contradicting~\Cref{lem:atoms_are_at_least_d}. Therefore there is no $\alpha \in (0, 1)$ such that $\alpha \in S$.
\end{proof}

We now prove some useful results of the above statement.

\begin{corollary}\label{cor:more_facts}
  Let $S\subseteq \rr_{\ge 0}$ be a bi-UFS. If $x,y\in S^\ast$ and $x\mid_{S^\ast} y$ then $x\le y$. Additionally, for every $s\in S$, the inequality $|s|\le s$ holds.
\end{corollary}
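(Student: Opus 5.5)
Both claims follow quickly from \Cref{lem:no_elements_between_zero_and_one}, which says that every nonzero element of $S$ is at least $1$.

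For the first claim, suppose $x \mid_{S^\ast} y$. Then $y = xm$ for some $m \in S^\ast$. Since $m \neq 0$ and $m \in S$, \Cref{lem:no_elements_between_zero_and_one} gives $m \ge 1$. Because $x > 0$, we obtain $y = xm \ge x$.

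For the second claim, the case $s = 0$ is immediate, since $|0| = 0$. For $s \neq 0$, write the unique additive factorization $s = a_1 + \cdots + a_n$ with $n = |s|$ and each $a_i \in \atoms^+(S)$. Each atom $a_i$ is a nonzero element of $S$, so \Cref{lem:no_elements_between_zero_and_one} gives $a_i \ge 1$. Summing over $i$ yields $s \ge n = |s|$.

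I do not expect a genuine obstacle here. The only point to watch is that the argument uses the lemma for arbitrary nonzero elements of $S$ (the cofactor $m$ and the atoms $a_i$), not just for additive atoms, and the lemma as stated covers this.
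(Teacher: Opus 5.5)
Your proposal is correct and follows essentially the same argument as the paper: it invokes \Cref{lem:no_elements_between_zero_and_one} to get that every element of $S^\ast$ is at least $1$. From that it deduces $y = xm \ge x$ and $s = a_1 + \cdots + a_{|s|} \ge |s|$. You simply spell out the cofactor and atom-sum steps that the paper leaves implicit.
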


\begin{proof}
By~\Cref{lem:no_elements_between_zero_and_one}, every element of
$S^\ast$ is at least $1$. Hence $x \mid_{S^\ast} y$ implies $x \le
y$, and since each additive atom of $S$ is at least $1$, the inequality $|s| \le s$ holds for every $s \in S$.
\end{proof}

\subsection{The Least Nontrivial Atom} Throughout this subsection we assume that $\ell := \min\bigl(\atoms^+(S) \setminus \{1\}\bigr)$ exists, and we work toward the bound $1 < \ell \le \phi$, where $\phi = (1 + \sqrt{5})/2$ denotes the golden ratio. We first record several structural facts about $\ell$ and the elements of $S$ below it, and then to rule out the intervals $(\phi, 2)$, $(2, 1 + \sqrt{2})$, and $[1 + \sqrt{2}, \infty)$ in turn.

We begin with an observation about divisibility by ordinary integers
in the presence of a nontrivial additive atom.

\begin{lemma}\label{lem:integers_do_not_divide_atom_plus_integer}
  Let $S$ be a bi-UFS. Let $a \in \atoms^+(S)$ with $a > 1$. For every integer $d \ge 2$ and every $n \in \nn_0$, the nondivisibility relation $d \nmid n + a$ holds.
\end{lemma}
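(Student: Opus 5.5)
Suppose, for a contradiction, that $n + a = d\,m$ in $S^\ast$ for some $m \in S^\ast$, an integer $d \ge 2$ and some $n \in \nn_0$. The argument compares additive factorizations on both sides and then uses divisibility of the atom $a$. By \Cref{lem:one_atom}, $1 \in \atoms^+(S)$ and $a \in \atoms^+(S)$. Hence the unique additive factorization of $n + a$ is $n$ copies of $1$ plus one copy of $a$, and $|n+a| = n+1$.

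\emph{Step 1.} Factor $m$ additively as $m = b_1 + \cdots + b_k$ with $b_i \in \atoms^+(S)$. Then
\[
  n + a = d\,m = \underbrace{m + m + \cdots + m}_{d \text{ times}},
\]
so the additive factorization of $n+a$ is the concatenation of $d$ copies of the factorization of $m$. Uniqueness in $(S,+)$ means the multiset $\{1^{n}, a\}$ equals $d$ copies of the multiset $\{b_1,\dots,b_k\}$. Every atom therefore appears with multiplicity divisible by $d$. But $a \neq 1$ appears exactly once, and $d \ge 2$ does not divide $1$. This is the contradiction.

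The only point needing care is that $d m$ really is the $d$-fold additive sum $m+\cdots+m$, which is clear since $d \in \nn$ and multiplication distributes over addition. The factorization of $d m$ is then literally $d$ copies of that of $m$; this uses \Cref{lem:one_atom}(2)-style concatenation and uniqueness. I also need $m \in S$, which holds because $d \mid_{S^\ast} n+a$ means $n+a = dm$ with $m \in S^\ast$.

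I do not expect a genuine obstacle. The main thing to check is that $a$ cannot coincide with $1$ in the multiset count, which is guaranteed by $a > 1$. Consequently no multiplicity argument can absorb the single copy of $a$.
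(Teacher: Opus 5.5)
Your proposal is correct and follows the paper's proof essentially step for step. Like the paper, you write $n+a$ as the $d$-fold sum $m+\cdots+m$, use additive unique factorization to conclude that every atom occurs with multiplicity divisible by $d$, and reach a contradiction from the single copy of $a \neq 1$ (with $1 \in \atoms^+(S)$ ensuring that $n$ contributes only copies of $1$).
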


\begin{proof}
  Suppose for the sake of contradiction that $n + a = d q$ for some $q\in S$. Since $d$ is an integer, we know \[n + a = \underbrace{q + q + \dots + q}_{d\text{ copies}}.\] Because of the uniqueness of additive atomic decompositions, every additive atom in the additive factorization of $d q$ has a multiplicity divisible by $d$. However, since every integer decomposes strictly into sums of the atom $1$, the atom $a$ in the additive factorization of $n + a$ appears with multiplicity $1$. Since $d \ge 2$, the multiplicity of $a$ is not divisible by $d$, so $d \nmid n + a$.
\end{proof}

Below $\ell$, every element of $S$ is an integer.

\begin{lemma}\label{lem:only_positive_integers_less_than_ell}
  Let $S$ be a bi-UFS for which $\ell = \min(\atoms^+(S) \setminus \{1\})$ exists. Then every $s\in S$ satisfying $0 < s < \ell$ is a positive integer.
\end{lemma}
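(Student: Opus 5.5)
Take $s\in S$ with $0<s<\ell$ and write its unique additive factorization $s=a_1+\cdots+a_k$ into atoms of $(S,+)$, where $k=|s|\ge 1$ because $s\ne 0$ (\Cref{lem:one_atom}(2)). The plan is to show that every $a_i$ equals $1$, so that $s=k$ is a positive integer.

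To do this, I will bound each $a_i$ from above and below. Every element of $S^\ast$ is at least $1$ by \Cref{lem:no_elements_between_zero_and_one}. Since the $a_i$ are nonnegative, each atom satisfies $a_i\le s<\ell$. Any atom $a_i\ne 1$ would belong to $\atoms^+(S)\setminus\{1\}$, and so $a_i\ge \ell$ by the definition of $\ell$ as the minimum of that set, which is a contradiction. Hence $a_i=1$ for every $i$, and therefore $s=k\in\nn$.

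I expect no real obstacle here. The only points to check are that $\ell$ exists, which is assumed, and that an atom different from $1$ cannot lie in $(0,1)$. The latter is already covered, because \Cref{lem:no_elements_between_zero_and_one} excludes all elements of $(0,1)$ from $S$, and in any case the minimality argument applies to every atom other than $1$ regardless of its size. The statement is thus a direct consequence of the definition of $\ell$ together with the additive atomicity of $S$.
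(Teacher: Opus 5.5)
Your proof is correct and is essentially the paper's argument: every atom in the additive factorization of $s$ is at most $s<\ell$, so minimality of $\ell$ forces it to equal $1$, and hence $s$ is a sum of copies of $1$. The appeal to \Cref{lem:no_elements_between_zero_and_one} is unnecessary, as you note yourself, since minimality of $\ell$ already rules out every atom other than $1$ lying below $\ell$.
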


\begin{proof}
  Since $s<\ell$, every divisor of $s$ that is an atom must be $1$. Hence every additive factorization of $s$ only contains $1$, so $s \in \nn$.
\end{proof}

A similar constraint holds just above $\ell$ for non-integer elements.

\begin{lemma}\label{lem:non_integers_less_than_ell_plus_one_are_atoms}
    Let $S$ be a bi-UFS for which $\ell = \min(\atoms^+(S) \setminus \{1\})$ exists. Suppose $r\in S$ is not an integer and $r<\ell+1$. Then $r$ is an additive atom of $S$.
\end{lemma}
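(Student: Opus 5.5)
Write $r$ via its unique additive factorization $r = a_1 + \cdots + a_k$ with $k = |r|$. The plan is to show $k = 1$ by contradiction, using only the ordering facts already established.

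First, by \Cref{lem:no_elements_between_zero_and_one}, every additive atom is at least $1$. The atom $1$ is the only one in $[1,\ell)$, and every other atom is at least $\ell$ by the definition of $\ell$.

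Since $r$ is not an integer, at least one $a_i$ differs from $1$, so that $a_i \ge \ell$. Suppose $k \ge 2$. Every remaining atom $a_j$ with $j \neq i$ is at least $1$, so $r \ge \ell + (k-1) \ge \ell + 1$. This contradicts the hypothesis $r < \ell + 1$. Hence $k = 1$ and $r \in \atoms^+(S)$.

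The only point needing care is the lower bound $a \ge 1$ for every additive atom. This is exactly \Cref{lem:no_elements_between_zero_and_one}, and it is also recorded in \Cref{cor:more_facts} as $|s| \le s$. Everything else is direct bookkeeping with the minimality of $\ell$, so I expect no real obstacle.
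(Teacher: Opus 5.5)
Your proof is correct and follows essentially the same route as the paper: decompose $r$ into additive atoms, use non-integrality to find an atom $b\neq 1$ with $b\ge \ell$, and use $r<\ell+1$ to rule out any further atom. The paper does this in two steps: first at most one non-$1$ atom because $r<2\ell$, then no copies of $1$ because otherwise $b<\ell$. Your single bound $r\ge \ell+(k-1)$ combines both steps and is slightly cleaner.
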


\begin{proof}
    Since $r < \ell+1 < 2\ell$, every additive factorization of $r$ contains at most one additive atom different from $1$. Moreover, the additive factorization of $r$ must contain at least one additive atom different from $1$ because $r$ is not a positive integer. Therefore the additive factorization of $r$ contains exactly one additive atom $b \neq 1$, so we can write $r = b + k$ for some nonnegative integer $k<r$.
    If $k\neq0$ then $k\ge 1$, so \[0< b = r - k <(\ell+1)-1=\ell,\] contradicting~\Cref{lem:only_positive_integers_less_than_ell} since $r$ not being an integer implies $b$ is not an integer. Therefore $k = 0$, so $r \in \atoms^+(S)$.
\end{proof}

Combining the previous two lemmas, we show that certain elements $n + \ell$ must be multiplicative atoms.

\begin{lemma}\label{lem:n_plus_ell_is_a_multiplicative_atom}
  Let $S$ be a bi-UFS for which $\ell = \min(\atoms^+(S) \setminus \{1\})$ exists. Let $n$ be a nonnegative integer. If $n + \ell < \ell^2$ then $n + \ell$ is an atom in the multiplicative monoid $S^*$. In particular, $\ell$ is a multiplicative atom.
\end{lemma}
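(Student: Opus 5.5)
Suppose $n+\ell = xy$ in $S^\ast$ with neither factor a unit. By \Cref{lem:multiplicative_monoid_reduced}, $S^\ast$ is reduced, so $x,y\neq 1$. By \Cref{lem:no_elements_between_zero_and_one}, every element of $S^\ast$ is at least $1$, hence $x,y>1$. I will split into cases according to whether each factor is an integer, and reach a contradiction in each case.

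\emph{Case 1: $x$ and $y$ both non-integers.} Since $x \le y$ may be assumed, \Cref{lem:only_positive_integers_less_than_ell} shows that any element of $S$ strictly between $1$ and $\ell$ is an integer, so a non-integer factor satisfies $x,y\ge \ell$. Then $n+\ell = xy \ge \ell^2$, which contradicts the hypothesis $n+\ell<\ell^2$.

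\emph{Case 2: at least one factor is an integer.} Say $x = m\in \nn$ with $m\ge 2$. Then $m \mid_{S^\ast} n+\ell$. This contradicts \Cref{lem:integers_do_not_divide_atom_plus_integer}, applied with $a=\ell$. That lemma needs $\ell\in\atoms^+(S)$ with $\ell>1$, which holds by the definition of $\ell$ together with \Cref{lem:no_elements_between_zero_and_one}. If both factors are integers, then $n+\ell$ is an integer, which is impossible because $\ell$ appears once in the unique additive factorization of $n+\ell$; in any case this situation is already covered by Case 2.

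Since neither case can occur, $n+\ell$ is a multiplicative atom. The only point requiring care is Case 1, where we need that non-integer elements of $S^\ast$ are at least $\ell$; this is exactly what \Cref{lem:only_positive_integers_less_than_ell} provides.

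For the final claim, take $n=0$. Because $\ell>1$, we have $\ell<\ell^2$, so $\ell$ is a multiplicative atom. Note also that $\ell$ is not a unit, since $S^\ast$ is reduced and $\ell\neq 1$.
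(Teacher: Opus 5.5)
Your proof is correct and follows the paper's argument: a nontrivial factorization $n+\ell=xy$ either has an integer factor $m\ge 2$, which \Cref{lem:integers_do_not_divide_atom_plus_integer} excludes, or has two non-integer factors, each at least $\ell$ by \Cref{lem:only_positive_integers_less_than_ell}, which forces $xy\ge \ell^2>n+\ell$. Your extra checks, that $\ell>1$ and that $n+\ell\neq 1$ is not a unit because $S^\ast$ is reduced, make explicit details the paper leaves implicit.
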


\begin{proof}
  Suppose there exists nonunits $x, y\in S^*$ such that $n + \ell = x y$. If one of $x$ and $y$ is an integer then it would be at least $2$, a contradiction by~\Cref{lem:integers_do_not_divide_atom_plus_integer}. If neither factor is an integer then both must be at least $\ell$ by~\Cref{lem:only_positive_integers_less_than_ell}. Then $x y \ge \ell^2 > n + \ell$, a contradiction. Therefore $n + \ell$ has no nontrivial multiplicative factorization, so $n + \ell$ is an atom. Taking $n = 0$ yields that $\ell$ is a multiplicative atom.
\end{proof}

A similar argument applies to ordinary primes.

\begin{lemma}\label{lem:primes_are_multiplicative_atoms}
    Let $S$ be a bi-UFS for which $\ell = \min(\atoms^+(S) \setminus \{1\})$ exists. Then all positive rational primes $p$ that are less than $\ell$ are multiplicative atoms in $S^\ast$. Additionally, when $\ell>\sqrt2$, the element $2$ is a multiplicative atom in $S^\ast$.
\end{lemma}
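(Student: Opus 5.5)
The plan is to argue directly from the structure of small elements recorded in \Cref{lem:only_positive_integers_less_than_ell} together with \Cref{lem:no_elements_between_zero_and_one} and \Cref{lem:multiplicative_monoid_reduced}. Let $p$ be a rational prime, and suppose $p = xy$ with $x, y \in S^\ast$. By \Cref{cor:more_facts}, both $x \le p$ and $y \le p$. Since $S^\ast$ is reduced, it suffices to show that one of $x, y$ equals $1$.

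First consider the case $p < \ell$. Both factors satisfy $0 < x, y \le p < \ell$, so \Cref{lem:only_positive_integers_less_than_ell} forces $x, y \in \nn$. Then $xy = p$ in $\nn$, and primality of $p$ gives $x = 1$ or $y = 1$. Moreover, $p \neq 1$ and $S^\ast$ is reduced, so $p$ is not a unit. Hence $p$ is a multiplicative atom.

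Next consider the case $p = 2$ with $\ell > \sqrt{2}$; by the first case we may also assume $2 \ge \ell$. Suppose $2 = xy$ with $x, y$ nonunits. By \Cref{lem:no_elements_between_zero_and_one}, both $x > 1$ and $y > 1$; a factor equal to $1$ is excluded because $S^\ast$ is reduced. If either factor is an integer, it is at least $2$, which forces the other factor to be at most $1$, a contradiction. So both factors are non-integers. By \Cref{lem:only_positive_integers_less_than_ell}, this gives $x, y \ge \ell$, and therefore $2 = xy \ge \ell^2 > 2$, a contradiction. Thus $2$ is a multiplicative atom.

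There is no real obstacle here. The only care needed is in the second case, where we must exclude factors lying in $(1, \ell)$ using \Cref{lem:only_positive_integers_less_than_ell}, and exclude factors equal to $1$ using reducedness of $S^\ast$.
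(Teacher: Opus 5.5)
Your proposal is correct and follows the paper's argument. In both parts you use \Cref{lem:only_positive_integers_less_than_ell}. For $2$, the paper says one factor is at most $\sqrt2<\ell$, hence an integer, hence $1$; you instead say both factors are non-integers, hence at least $\ell$, so $xy\ge\ell^2>2$, which is the same reasoning. Your explicit use of reducedness of $S^\ast$ to check that $p$ is a nonunit makes precise a point the paper leaves implicit.
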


\begin{proof}
    Suppose $p$ factors into $xy$ for $x,y \in S^*$. By~\Cref{lem:no_elements_between_zero_and_one}, there are no elements of $S$ in $(0,1)$, so $1 \le x,y \le p<\ell$. Furthermore, by~\Cref{lem:only_positive_integers_less_than_ell}, all elements less than $\ell$ are positive integers, so $x,y \in \nn$. But $p$ is prime in $\nn$, so one of $x,y$ must be $1$. Therefore $p$ has no nontrivial factorization in $S^*$, so it is a multiplicative atom. 
    Suppose $xy=2$ and $\ell>\sqrt2$. At least one of $x$ and $y$ has to be at most $\sqrt 2$, which means that it must be a positive integer less than $\sqrt2$ by~\Cref{lem:only_positive_integers_less_than_ell}. Hence $2$ only has trivial factorizations in $S^*$ in this case.
\end{proof}

We now exclude successive intervals for $\ell$. The algebraic identity \[(1 + x^3)(1 + x + x^2) = (1 + x)(1 + x^2 + x^4),\] already used in the proof of~\Cref{lem:no_elements_between_zero_and_one}, again drives the argument.

\begin{lemma}\label{lem:ell_is_not_between_phi_and_2}
  Let $S$ be a bi-UFS for which $\ell = \min(\atoms^+(S) \setminus \{1\})$ exists. Then $\ell$ cannot satisfy $(1+\sqrt{5})/2=\phi < \ell < 2$.
\end{lemma}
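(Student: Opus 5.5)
The plan is to specialize the identity
\[
(1+\ell^3)(1+\ell+\ell^2)=(1+\ell)(1+\ell^2+\ell^4)
\]
to the atom $\ell$ and use that $1+\ell$ is a multiplicative prime. Suppose $\phi<\ell<2$. Since $\ell>\phi$, we have $\ell^2>\ell+1$. \Cref{lem:n_plus_ell_is_a_multiplicative_atom} with $n=1$ then shows that $1+\ell$ is an atom of $S^\ast$. By \Cref{lem:one_atom}(3) it is prime. The identity gives $1+\ell\mid_{S^\ast}(1+\ell^3)(1+\ell+\ell^2)$, so $1+\ell$ divides one of the two factors. I will rule out each case by computing the real quotient and contradicting additive unique factorization.

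\emph{Case $1+\ell\mid_{S^\ast}1+\ell+\ell^2$.} The quotient is
\[
s=1+\frac{\ell^2}{1+\ell}.
\]
Because $\phi<\ell<2$, we get $1<\ell^2/(1+\ell)<4/3$, hence $2<s<7/3$. In particular $s$ is not an integer. Also $s<\ell+1$, since this is equivalent to $\ell^2<\ell+\ell^2$. By \Cref{lem:non_integers_less_than_ell_plus_one_are_atoms}, $s\in\atoms^+(S)$, and $s\ne\ell$ because $s>2>\ell$.

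Now compare the unique additive factorizations of the two sides of $1+\ell+\ell^2=s+s\ell$. The right side contains the atom $s$, so the left side must as well. The left side is $1+\ell$ plus the factorization of $\ell^2$, so $s$ must occur in the factorization of $\ell^2$. Write $\ell^2=s+t$ with $t=\ell^2-s=\ell^3/(1+\ell)-1\in S$. Bounding $t$ numerically places it in $(0,\ell)$, so by \Cref{lem:only_positive_integers_less_than_ell} it is a nonnegative integer. Hence $t\in\{0,1\}$ (as $t<4/3$). Either value gives the equation $\ell^3=(t+1)(1+\ell)$, and I expect to check that its root lies outside $(\phi,2)$, or at least contradicts $s\ne\ell^2$ together with the atom count.

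\emph{Case $1+\ell\mid_{S^\ast}1+\ell^3$.} The quotient is $q=\ell^2-\ell+1$, which lies in $(2,3)$ for $\ell\in(\phi,2)$ and is not an integer. Here I would argue as in \Cref{lem:atomic_divisor_1+u}. Write $1+\ell^3=q+q\ell$ and track the atom $1$, which appears once on the left unless the factorization of $\ell^3$ contributes further copies. Alternatively, reduce to the previous case by showing that $q-1=\ell(\ell-1)\in S$ would be a non-integer below $\ell$, contradicting \Cref{lem:only_positive_integers_less_than_ell}. To get $q-1\in S$, I would extract a copy of $1$ from the factorization of $q$ via \Cref{lem:non_integers_less_than_ell_plus_one_are_atoms}-type counting: if $q<\ell+1$ then $q$ is an atom, and otherwise $q=1+b$ with $b$ an atom.

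The main obstacle is the additive bookkeeping. We do not know the additive normal form of $\ell^2$ or $\ell^3$, since $S$ need not be monogenic here. So I expect to need \Cref{lem:only_positive_integers_less_than_ell}, \Cref{lem:non_integers_less_than_ell_plus_one_are_atoms}, and the fact that $2$ is a multiplicative prime (\Cref{lem:primes_are_multiplicative_atoms}, since $\ell>\sqrt2$) to pin these down. My first attempt will be to bound each real quotient into a window $(k,\ell+k)$ with $k\in\nn_0$, which forces its additive form to be one integer plus at most one atom, and then match atoms on both sides.
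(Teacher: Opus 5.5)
Your skeleton matches the paper's: the same identity, primality of $1+\ell$, the same case split and the same quotients. However, neither case is closed, and in one of them the contradiction you expect does not exist.

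\textbf{Case $1+\ell\mid_{S^\ast}1+\ell+\ell^2$.} Your bookkeeping is correct up to $t\in S\cap(0,\ell)$, which forces $t=1$. Two small corrections: $t>0$, so $t=0$ is not an option; and $t\to 5/3$ as $\ell\to 2$, so the bound $t<4/3$ is wrong, though $t<\ell<2$ is all you need. This yields $\ell^3=2(1+\ell)$. The real root of $x^3-2x-2$ is about $1.769$, which lies inside $(\phi,2)$, so no numerical contradiction is available. The missing step is multiplicative:
\begin{itemize}
\item $\ell$ and $1+\ell$ are atoms of $S^\ast$ by \Cref{lem:n_plus_ell_is_a_multiplicative_atom};
\item $2$ is an atom by \Cref{lem:primes_are_multiplicative_atoms}, since $\ell>\sqrt2$.
\end{itemize}
Hence $\ell\cdot\ell\cdot\ell=2\cdot(1+\ell)$ gives two distinct atomic factorizations in $S^\ast$, contradicting that $S^\ast$ is a UFM. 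You list the primality of $2$ among your tools but never use it to finish.

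\textbf{Case $1+\ell\mid_{S^\ast}1+\ell^3$.} Your target $q-1=\ell^2-\ell\in S$ is the right one, since $1<\ell^2-\ell<\ell<2$. But the route you propose to reach it cannot work. Because $\ell<2$, we have $q=\ell^2-\ell+1<\ell+1$, so $q$ is always an additive atom by \Cref{lem:non_integers_less_than_ell_plus_one_are_atoms}. No copy of $1$ can be split off from it, and you give no contradiction in that branch. Tracking the atom $1$ in $1+\ell^3=q+q\ell$ also leads nowhere, because you have no control over the additive factorizations of $\ell^3$ and $q\ell$.

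The idea you need is to add $\ell$ instead: $q+\ell=\ell^2+1$.
\begin{itemize}
\item The left side visibly contains the atom $\ell$.
\item On the right, $1$ is an atom different from $\ell$, so $\ell$ must occur in the additive factorization of $\ell^2$.
\item Hence $\ell^2-\ell\in S$ is a non-integer in $(1,\ell)$, contradicting \Cref{lem:only_positive_integers_less_than_ell}.
\end{itemize}
Equivalently, once you know $q$ is an atom, $q+\ell$ would be the unique factorization of $1+\ell^2$, yet that factorization must contain the atom $1$.
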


\begin{proof}
  Assume for the sake of contradiction that $\phi < \ell < 2$. Since $\ell > \phi$ and $\phi$ is the positive root of $x^2 = x+1$, the inequality $1 + \ell < \ell^2$ holds. By~\Cref{lem:n_plus_ell_is_a_multiplicative_atom},  $1 + \ell$ is a multiplicative atom of $S^\ast$, and hence prime. We now apply the identity
  \[(1 + \ell^3)(1 + \ell + \ell^2) = (1 + \ell)(1 + \ell^2 + \ell^4).\]
  Because $1 + \ell$ is prime, it divides one of the two factors on the left. This splits the argument into two cases.
  
 \medskip\noindent\textsc{Case A}: Suppose $1 + \ell$ multiplicatively divides $ 1+\ell^3$. Then the quotient $(1 + \ell^3)/(1 + \ell) = \ell^2 - \ell + 1$ is in $S$. Adding $\ell$ to both sides yields $(\ell^2 - \ell + 1) + \ell = \ell^2 + 1$ is in $S$. The left-hand side contains an explicit copy of the additive atom $\ell$, and thus by unique factorization, the additive atomic decomposition of $\ell^2 + 1$ must contain it as well. The summand $1$ on the right is already an atom on its own, so the $\ell$ must come from the decomposition of $\ell^2$. Therefore $\ell^2 - \ell \in S$. We can rearrange $\ell^2 > \ell + 1$ to be $1 < \ell^2 - \ell$, and $\ell < 2$ implies $\ell^2 - \ell < \ell$. Therefore $1 < \ell^2 - \ell < \ell<2$, which contradicts~\Cref{lem:only_positive_integers_less_than_ell} since no integers are in the interval $(1, 2)$. Thus Case A is impossible.

\medskip\noindent\textsc{Case B}: Suppose $1 + \ell$ multiplicatively divides $1 + \ell + \ell^2$. Then the quotient $r = (1 + \ell + \ell^2)/(1 + \ell)$ must lie in $S$. Since $(1+x+x^2)/(1+x)$ is increasing for positive $x$, we know that since $\phi < \ell < 2$, it follows that $$\frac{1+\phi+\phi^2}{1+\phi}=\frac{2\phi^2}{\phi^2} = 2 < r < \frac{7}{3}.$$ Additionally, since $r = \ell + 1/(1+\ell)$ and $1/(1+\ell)<1$, we know $r<\ell+1$. Therefore by~\Cref{lem:non_integers_less_than_ell_plus_one_are_atoms}, we know that $r$ is an additive atom.

    Rearranging and expanding $r = (1 + \ell + \ell^2)/(1 + \ell)$ yields $r + r \ell = 1 + \ell + \ell^2$. Since $r$ is larger than both $1$ and $\ell$, the uniqueness of additive factorizations forces $r$ to appear in the additive factorization of $\ell^2$. Hence we may write $\ell^2 = r + s$ for some $s \in S$. As a real number, $s = \ell^2 - \ell - 1/(1 + \ell)$. 
    Since $\ell < 2$, the inequality $\ell(\ell-2)< 0 < 1/(1+\ell)$ holds, so $s < \ell$. Similarly $\ell > \phi$ implies that $\ell(\ell^2 -1) > \phi(\phi^2-1)=\phi \cdot \phi > 1$ so $s>0$. Since $s \in (0, \ell)$, we know by~\Cref{lem:only_positive_integers_less_than_ell} that $s$ must be a positive integer, and the only integer in $(0, \ell) \subseteq (0,2)$ is $s = 1$.
    
    Substituting this value of $s$ yields the equation $\ell^2 = r + 1 = 1 + \ell + 1/(\ell + 1)$, and after multiplying by $\ell + 1$ on both sides, the equation can be rearranged into $\ell^3 = 2(\ell + 1)$. Since $\ell>\phi>\sqrt2$, we know by~\Cref{lem:primes_are_multiplicative_atoms} that $2$ is a multiplicative atom. Since $\ell$ and $\ell + 1$ are also multiplicative atoms by~\Cref{lem:n_plus_ell_is_a_multiplicative_atom}, we see that this case contradicts that $S^*$ is a UFM.

    None of the cases above are valid, and therefore, $\ell$ cannot satisfy $\phi < \ell < 2$. 
\end{proof}

Rescaling the same identity rules out the next interval.

\begin{lemma}\label{lem:ell_is_not_between_2_and_1+sqrt2}
  Let $S$ be a bi-UFS for which $\ell = \min(\atoms^+(S) \setminus \{1\})$ exists. Then $\ell$ cannot satisfy $2 < \ell < 1 + \sqrt{2}$.
\end{lemma}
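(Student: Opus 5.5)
The plan is to rescale the identity used in \Cref{lem:ell_is_not_between_phi_and_2} by substituting $x=\ell/2$ and clearing denominators, which gives
\[
(8+\ell^3)(4+2\ell+\ell^2)=(2+\ell)(16+4\ell^2+\ell^4).
\]
This identity holds because $a^3+b^3=(a+b)(a^2-ab+b^2)$ and $(a^2-ab+b^2)(a^2+ab+b^2)=a^4+a^2b^2+b^4$, here with $a=2$, $b=\ell$. Assume $2<\ell<1+\sqrt2$. Since $\ell>2$ we have $\ell^2-\ell-2=(\ell-2)(\ell+1)>0$, so $2+\ell<\ell^2$. By \Cref{lem:n_plus_ell_is_a_multiplicative_atom} (with $n=2$), $2+\ell$ is a multiplicative atom, and by \Cref{lem:one_atom} it is prime in $S^\ast$. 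Hence $2+\ell$ divides $8+\ell^3$ or $4+2\ell+\ell^2$, and I split into these two cases.

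\emph{Case A: $2+\ell$ divides $8+\ell^3$.} Then the quotient $q=\ell^2-2\ell+4$ lies in $S$, and so $q+2\ell=\ell^2+4\in S$. The left-hand side $q+2\ell$ contains the atom $\ell$ with multiplicity at least $2$. On the right-hand side, the four copies of $1$ contribute no $\ell$, so by uniqueness of the additive factorization the unique factorization of $\ell^2$ contains $\ell$ at least twice. Therefore $\ell^2-2\ell\in S$. But $\ell>2$ gives $\ell^2-2\ell>0$, and $\ell<1+\sqrt2$ gives $(\ell-1)^2<2$, that is, $\ell^2-2\ell<1$. So $\ell^2-2\ell\in(0,1)$, contradicting \Cref{lem:no_elements_between_zero_and_one}.

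\emph{Case B: $2+\ell$ divides $4+2\ell+\ell^2$.} Set $r=(4+2\ell+\ell^2)/(2+\ell)=\ell+4/(2+\ell)\in S$. For $\ell\in(2,1+\sqrt2)$ we have $4/(2+\ell)\in(0,1)$, so $\ell<r<\ell+1$.

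\begin{enumerate}
\item First I check that $r$ is not an integer. Since $r\in(2,4)$, the only possibility is $r=3$. But $r=3$ forces $\ell^2-\ell-2=0$, that is, $\ell=2$, which is excluded.
\item Hence $r$ is an additive atom by \Cref{lem:non_integers_less_than_ell_plus_one_are_atoms}.
\item Next I compare additive factorizations in $2r+r\ell=4+2\ell+\ell^2$. The atom $r$ appears at least twice on the left, and $r$ differs from both $1$ and $\ell$. So $r$ must occur at least twice in the factorization of $\ell^2$, and $\ell^2=2r+s$ with $s\in S$.
\item Finally, $s=\ell^2-2\ell-8/(2+\ell)$. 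Here $\ell^2-2\ell<1$, while $8/(2+\ell)>8/(3+\sqrt2)>1$, so $s<0$, a contradiction.
\end{enumerate}

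The only delicate points are the multiplicity bookkeeping, that is, extracting the right number of copies of $\ell$ (Case A) or of $r$ (Case B) from $\ell^2$ via uniqueness, and checking the numerical bounds on the whole interval. Unlike \Cref{lem:ell_is_not_between_phi_and_2}, Case B should close by a sign contradiction alone, with no appeal to \Cref{lem:primes_are_multiplicative_atoms}.
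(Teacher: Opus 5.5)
Your proposal is correct and follows the paper's proof step for step: the same rescaled identity, the same primality of $2+\ell$, the same Case A extraction of $\ell^2-2\ell\in S\cap(0,1)$, and the same Case B argument forcing two copies of the atom $r$ into the factorization of $\ell^2$. The only cosmetic differences are that you certify $r\notin\zz$ by noting that $r=3$ would force $\ell=2$, whereas the paper uses the bound $3<r<4$, and that you phrase the final contradiction as $s=\ell^2-2r<0$ rather than $\ell^2<6<2r$.
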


\begin{proof}
  Assume for the sake of contradiction that $2 < \ell < 1 + \sqrt{2}$. In particular, because $\ell > 2$, it follows that $\ell^2 > 2 \ell > \ell + 2$. By~\Cref{lem:n_plus_ell_is_a_multiplicative_atom}, $\ell + 2$ is a multiplicative atom of $S^\ast$, and hence prime. We now apply the following identity: 
  \[
    (8 + \ell^3)(4 + 2\ell + \ell^2) = (2 + \ell)(16 + 4 \ell^2 + \ell^4).
  \]
  Since $\ell + 2$ is prime, it divides one of the two factors on the left. We now split this into cases.

\textsc{\medskip\noindent\textsc{Case A}}: Suppose $\ell + 2$ multiplicatively divides $8 + \ell^3$. In this case, the quotient is $$\frac{8 + \ell^3}{2+\ell} = 4 - 2\ell + \ell^2,$$ which therefore lies in $S$. Adding $2\ell$ yields the equation $(4 - 2\ell + \ell^2) + 2\ell = 4 + \ell^2$. The left-hand side exhibits at least two copies of the additive atom $\ell$, so by the uniqueness of additive factorization, the decomposition of the right-hand side must also contain two copies of $\ell$. Both copies must come from $\ell^2$, as $4$ is a positive integer and thus has only the additive atom $1$ in its unique factorization. Thus $\ell^2 - 2 \ell \in S$. Since $2 < \ell < 1 + \sqrt 2$, note that $$0 < \ell^2 - 2\ell = \ell(\ell-2) < (1 + \sqrt 2)(\sqrt 2 - 1) = 1,$$ which contradicts~\Cref{lem:no_elements_between_zero_and_one}.

\textsc{\medskip\noindent\textsc{Case B}}: Suppose $\ell + 2$ multiplicatively divides $4 + 2\ell + \ell^2$. The quotient $$r:=\frac{4 + 2\ell + \ell^2}{2 + \ell}=\ell + \frac{4}{2 + \ell}$$ must lie in $S$. Since $x+4/(2+x)$ is increasing for positive $x$, we know that since $2 < \ell < 1 + \sqrt 2$, it follows that $$3 < r < 1+\sqrt2+\frac{4}{3+\sqrt2}=\frac{19}{7}+\frac{3\sqrt2}{7}<4,$$ so $r$ is not an integer. Additionally, since $4/(2+\ell)<1$ for $\ell > 2$, we know $r<\ell+1$. Therefore by~\Cref{lem:non_integers_less_than_ell_plus_one_are_atoms}, we know $r \in \atoms^+(S)$. Rearranging and expanding $r = (4 + 2 \ell + \ell^2)/(2 + \ell)$ yields $2r + \ell r= 4 + 2 \ell + \ell^2$. The additive atom $r$ appears at least twice in the atomic decomposition of the left-hand side. By uniqueness of additive factorizations, the same two copies must also appear on the right. Since $r$ is larger than both $1$ and $\ell$, and the additive atomic decomposition of $4+2\ell$ contains only $1$'s and $\ell$'s, the additive atomic decomposition of $\ell^2$ must contain both copies of $r$. Hence $\ell^2 \ge 2 r$. However, $\ell^2 < (1+\sqrt 2)^2 < 6 < 2r$, a contradiction. 

The above cases both yield contradictions, so therefore $\ell$ may not satisfy $2 < \ell < 1 + \sqrt 2$. 
\end{proof}

For the final interval we use a parametric identity, choosing an integer $n \in (\ell, \ell^2 - \ell)$.

\begin{lemma}\label{lem:ell_is_not_at_least_1+sqrt2}
  Let $S$ be a bi-UFS for which $\ell = \min(\atoms^+(S) \setminus \{1\})$ exists. Then $\ell$ cannot satisfy $\ell \ge 1 + \sqrt{2}$.
\end{lemma}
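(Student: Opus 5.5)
The plan is to follow the template of \Cref{lem:ell_is_not_between_phi_and_2,lem:ell_is_not_between_2_and_1+sqrt2}, using the homogenized identity
\[
  (n^3+\ell^3)(n^2+n\ell+\ell^2)=(n+\ell)(n^4+n^2\ell^2+\ell^4)
\]
with a suitable positive integer $n$ chosen in place of $1$ or $2$.

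\emph{Choice of $n$.} Since $\ell\ge 1+\sqrt2$, we have $\ell^2-2\ell=\ell(\ell-2)\ge 1$. Hence the interval $(\ell,\ell^2-\ell)$ has length at least $1$ and contains an integer $n$. In the boundary case $\ell=1+\sqrt2$ the length is exactly $1$, but $\ell$ is irrational, so $\lceil \ell\rceil$ is still interior. Then $n+\ell<\ell^2$, so \Cref{lem:n_plus_ell_is_a_multiplicative_atom} makes $n+\ell$ a multiplicative atom. By \Cref{lem:one_atom} it is therefore prime in $S^\ast$, so it divides one of the two factors on the left of the identity.

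\textsc{Case A}: $n+\ell \mid_{S^\ast} n^3+\ell^3$. The quotient $n^2-n\ell+\ell^2$ then lies in $S$. Adding $n\ell$, which is $n$ copies of the atom $\ell$, gives $n^2+\ell^2$. The integer $n^2$ contributes only copies of $1$, so by uniqueness of additive factorization all $n$ copies of $\ell$ must occur in the decomposition of $\ell^2$. This gives $\ell^2-n\ell\in S$. But $n>\ell$ forces $\ell^2-n\ell<0$, a contradiction. This case is routine; it is exactly why $n$ is taken larger than $\ell$.

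\textsc{Case B}: $n+\ell \mid_{S^\ast} n^2+n\ell+\ell^2$. Here $r=\ell+n^2/(n+\ell)\in S$ and
\[
  nr+\ell r=n^2+n\ell+\ell^2.
\]
The left side contains the additive factorization of $r$ repeated $n$ times. I would write $r=k+j\ell+t$, where $k,j\in\nn_0$ and $t$ collects the atoms of $r$ lying outside $\{1,\ell\}$.

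Suppose first that $t\neq 0$. Any atom in $t$ appears at least $n$ times on the left, and on the right it can only come from $\ell^2$. This gives $\ell^2\ge n\cdot t$. One then has to compare this with the bound $t\ge \ell$ (no atoms lie strictly between $1$ and $\ell$) and with the upper bound $n<\ell^2-\ell$.

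Suppose instead that $t=0$. Then compare the numbers of $\ell$'s and the total additive lengths on both sides, using \Cref{lem:one_atom}(2) and $|\ell^2|\ge 1$. The identity $r(n+\ell)=n^2+n\ell+\ell^2$ with $r=k+j\ell$ then becomes a polynomial relation in $\ell$ of degree at most $2$. Combined with the irrationality of $\ell$, or a size estimate of $r$ between consecutive values of $k+j\ell$, this should force a contradiction. If $\ell$ turns out to be quadratic here, one can also invoke \Cref{lem:primes_are_multiplicative_atoms} as in \Cref{lem:ell_is_not_between_phi_and_2}.

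I expect Case B to be the main obstacle. The condition $n<\ell^2-\ell$ must be what controls the number of atoms available in $\ell^2$. The first thing I would try is the length count $n|r|+|\ell r|=n^2+n+|\ell^2|$ together with $|\ell r|\ge |r|$ (by \Cref{lem:length_inequality}) and $|\ell^2|\le \ell^2$ (by \Cref{cor:more_facts}). This should yield $|r|\le (n^2+n+\ell^2)/(n+1)$, which I would then play off against the $n$-fold multiplicity constraint. If needed, I would refine the choice of $n$ within $(\ell,\ell^2-\ell)$, for instance taking $n=\lceil\ell\rceil$ so that $n^2/(n+\ell)$ is just above $n/2$ and $r$ lands in a narrow window.
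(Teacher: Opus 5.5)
Your choice of $n\in(\ell,\ell^2-\ell)$, the homogenized identity, and your Case A match the paper's argument. Case B, however, is left as a list of strategies, and the ones you propose for the subcase $t=0$ cannot work.

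The subcase $t\neq 0$ does close at once, but it uses $n>\ell$, not $n<\ell^2-\ell$. An atom $b\notin\{1,\ell\}$ of $r$ satisfies $b>\ell$ and must occur at least $n$ times in the factorization of $\ell^2$. Hence $\ell^2\ge nb>n\ell>\ell^2$.

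For $t=0$, you still need to carry out the count of copies of $\ell$ in $nr+\ell r=n^2+n\ell+\ell^2$. It gives $nj+k+jm=n+m$, where $m$ is the multiplicity of $\ell$ in $\ell^2$. Since $r>\ell$ excludes $(j,k)=(1,0)$, this forces $j=0$. So $r=k\in\nn$, and $h:=r-n=\ell^2/(n+\ell)$ is an integer with $1<h<\ell/2$ (using $n+\ell<\ell^2$ and $n>\ell$).

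At this point no additive or size argument can finish, because the configuration is additively consistent. Take $\ell=1+\sqrt{11}$, so $\ell^2=2\ell+10$ and $(\nn_0[\ell],+)$ is a UFM with atoms $1,\ell$. With $n=5\in(\ell,\ell^2-\ell)$ one gets $25+5\ell+\ell^2=7(5+\ell)$, i.e.\ $r=7$. In this example $\ell$ is irrational, and the multiplicity counts, your length identity $n|r|+|\ell r|=n^2+n+|\ell^2|$, and the resulting bound on $|r|$ all hold. So neither irrationality nor a size estimate nor the length count can yield a contradiction.

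The missing idea is the multiplicative clash $\ell\cdot\ell=h\cdot(n+\ell)$ in $S^\ast$. Here $\ell$ and $n+\ell$ are multiplicative atoms by \Cref{lem:n_plus_ell_is_a_multiplicative_atom}. The rational prime factors of the integer $h\in[2,\ell)$ are multiplicative atoms by \Cref{lem:primes_are_multiplicative_atoms}. The two sides are therefore distinct atomic factorizations of the same element, contradicting that $S^\ast$ is a UFM. Your remark about invoking \Cref{lem:primes_are_multiplicative_atoms} points this way, but Case B is not established until you derive $j=0$ and $h\in\nn\cap[2,\ell)$ and write down this factorization.
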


\begin{proof}
  Assume for the sake of contradiction that $\ell \ge 1 + \sqrt{2}$. Consider the open interval $(\ell, \ell^2 - \ell)$. When $\ell = 1 + \sqrt{2}$, this interval is $(1 + \sqrt 2, 2 + \sqrt 2)$, which contains the integer $3$. For $\ell > 1 + \sqrt 2$, we know that the length of the interval $(\ell, \ell^2 - \ell)$, which is $\ell^2-2\ell$, will be larger than $(1+\sqrt2)^2-2(1+\sqrt2)=1$ because $x^2-2x$ is increasing for all $x\geq1+\sqrt{2}$. Since all open intervals that are larger than $1$ in length contain an integer, we know that the interval $(\ell, \ell^2 - \ell)$ will always contain an integer. Fix that integer as $n$. Then $n + \ell < \ell^2$, so by~\Cref{lem:n_plus_ell_is_a_multiplicative_atom}, we know $n + \ell$ is an atom in the multiplicative monoid and hence prime. We now apply the following identity, similar to the above theorems:
  \[
    (n^3 + \ell^3)(n^2 + n \ell + \ell^2) = (n + \ell)(n^4 + n^2 \ell^2 + \ell^4).
  \]
  We now divide into cases based on which factor the prime $n + \ell$ divides on the left-hand side.
  
  \textsc{\medskip\noindent\textsc{Case A}}: Suppose $n + \ell$ multiplicatively divides $n^3 + \ell^3$. Then the quotient $q := n^2 - n \ell + \ell^2$ is an element of $S$. Adding $n \ell$ to both sides of the equation yields $q + n \ell = n^2 + \ell^2$. The left-hand side contains $n$ copies of the additive atom $\ell$, which must also appear in the additive atomic decomposition of the right-hand side by unique factorization. Since $n^2$ is an integer and hence only has $1$ in its additive atomic decomposition, we know that the additive atomic decomposition of $\ell^2$ must contain $n$ copies of $\ell$. Thus $\ell^2 \ge n\ell$. However, $n > \ell$ by construction, so $n \ell > \ell^2$, a contradiction. 

  \textsc{\medskip\noindent\textsc{Case B}}: Suppose $n + \ell$ multiplicatively divides $n^2 + n \ell + \ell^2$. Then $$r := \frac{n^2 + n \ell + \ell^2}{n + \ell} = n + \frac{\ell^2}{n + \ell}$$ lies in $S$. Let the unique additive atomic factorization of $\ell^2$ be of the form
  \[
    \ell^2 = m_0 \cdot 1 + m_\ell \ell + \sum_{b \in \mathcal{B}} m_b b,
  \]
  where $\mathcal{B}$ consists of all additive atoms other than $1$ and $\ell$ in the factorization of $\ell^2$. Since every additive atom not equal to $1$ is at least $\ell$, each coefficient $m_b$ and $m_\ell$ is at most $\ell$, and hence less than $n$. Write the unique additive atomic factorization of $r$ as
  \[
    r = c_0 \cdot 1 + c_\ell \ell + \sum_{b \in \mathcal{C}} c_b b,
  \]
  where $\mathcal{C}$ is the set of additive atoms other than $1$ and $\ell$ in the factorization of $r$. By the definition of $r$,
    \begin{equation} \label{eq:equation_with_nr}
		nr + \ell r = n^2 + n \ell + \ell^2.
	\end{equation}
  For each atom $b\notin\{1,\ell\}$, the term $n r$ contributes $n c_b$ copies of $b$ to the left-hand side of~\eqref{eq:equation_with_nr}. Since the unique additive factorization of $n^2 + n \ell$ is $n^2$ copies of the atom $1$ and $n$ copies of the atom $\ell$, we know that the $n c_b$ copies of the atom $b$ must be in the additive factorization of $\ell^2$. Since $b > \ell$, we know $\ell^2 < b\ell < bn$. Hence the right-hand side of~\eqref{eq:equation_with_nr} contains fewer than $n$ copies of $b$. This implies that $c_b = 0$ for all $b \not \in \{ 1, \ell \}$, and that $r = c_0 + c_\ell \ell$. Hence $nr = nc_0 + nc_\ell \ell$ and $\ell r = c_0\ell + c_\ell \ell^2$. Therefore
  \begin{equation} \label{eq:equation_expanded}
		nc_0 + nc_\ell \ell + c_0\ell + c_\ell \ell^2 = n^2 + n \ell + \ell^2.
  \end{equation}

  We now count multiplicities of $\ell$ in~\eqref{eq:equation_expanded}. On the left-hand side, there are $nc_\ell + c_0 + c_\ell m_\ell$ copies of $\ell$, while the right-hand side contains $n + m_\ell$ copies of $\ell$. For $c_\ell \ge 1$, the inequality $nc_\ell + c_0 + c_\ell m_\ell > n + m_\ell$ holds, so necessarily $c_\ell = 0$ and $r = c_0$, an integer. Thus, we know from $r = n + \ell^2/(n + \ell)$ that $h := \ell^2/(n + \ell)$ is a positive integer and hence in $S$. Because $n + \ell < \ell^2$ and $n > \ell$, we know $1 < h < \ell / 2 < \ell$. Thus $h \ge 2$ is an integer strictly less than $\ell$. We can factor $h$ into rational primes, each of which are less than $\ell$ and hence are multiplicative atoms by~\Cref{lem:primes_are_multiplicative_atoms}. Therefore the equation $\ell \cdot \ell = h(n + \ell)$ describes two distinct factorizations, since $\ell$ and $n + \ell$ are multiplicative atoms by~\Cref{lem:n_plus_ell_is_a_multiplicative_atom}. This is a contradiction, so the desired result follows.
\end{proof}

It follows from the above lemmas that there is only one possible interval left for $\ell$.

\begin{lemma} \label{lem:final_containment}
Let $S$ be a bi-UFS. Suppose $\ell:=\min(\atoms^+(S)\setminus\{1\})$ exists. Then we must have $1 < \ell \le \phi=(1+\sqrt{5})/2$, the golden ratio.
\end{lemma}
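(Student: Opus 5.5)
The plan is to assemble the lemma from the exclusions already proved, plus one small extra case: $\ell=2$.

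First, the lower bound. Since $\ell\in\atoms^+(S)$ and $S^\ast\subseteq[1,\infty)$ by \Cref{lem:no_elements_between_zero_and_one}, we have $\ell\ge 1$. Also $\ell\ne 1$ by the definition of $\ell$, so $\ell>1$.

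For the upper bound, suppose $\ell>\phi$. Then $\ell$ lies in one of four ranges:
\begin{itemize}
\item $(\phi,2)$, excluded by \Cref{lem:ell_is_not_between_phi_and_2};
\item $\{2\}$;
\item $(2,1+\sqrt2)$, excluded by \Cref{lem:ell_is_not_between_2_and_1+sqrt2};
\item $[1+\sqrt2,\infty)$, excluded by \Cref{lem:ell_is_not_at_least_1+sqrt2}.
\end{itemize}
Only $\ell=2$ remains.

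The case $\ell=2$ is immediate and is the only point needing care. Since $2=1+1$ in $(S,+)$, the element $2$ is not an additive atom. This contradicts $\ell\in\atoms^+(S)$. The same reasoning shows that $\ell$ can never be an integer, because every positive integer $n\ge2$ decomposes as a sum of copies of the atom $1$.

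Therefore $1<\ell\le\phi$.
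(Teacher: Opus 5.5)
Your proposal is correct and matches the paper's proof: the three interval-exclusion lemmas rule out $(\phi,2)$, $(2,1+\sqrt2)$, and $[1+\sqrt2,\infty)$, and $\ell\neq 2$ because $2=1+1$ is not an additive atom. Your explicit derivation of $\ell>1$ from \Cref{lem:no_elements_between_zero_and_one} fills in a point the paper leaves implicit.
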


\begin{proof}
From~\Cref{lem:ell_is_not_between_phi_and_2},~\Cref{lem:ell_is_not_between_2_and_1+sqrt2},~\Cref{lem:ell_is_not_at_least_1+sqrt2}, and the fact that $2$ is not an additive atom implies $\ell\neq2$, we see that the only remaining interval for $\ell$ is $1 < \ell \le \phi$.
\end{proof}

\subsection{Proof of the Degree-Two Case} To prove the main theorem of this section, we use the fact that the following two positive semidomains are not bi-UFS.

\begin{example}\label{ex:golden_ratio}
    Let $\phi=(1+\sqrt{5})/2$ be the golden ratio, which satisfies $1+\phi=\phi^2$. We will show the positive semidomain $\nn_0[\phi]$ is not bi-UFS. We first prove that $5$ is a multiplicative atom. If $$(a+b\phi)(c+d\phi)=ac+\phi(ad+bc)+(1+\phi)(bd)=5$$ then $ad+bc+bd=0$, so $ad=bc=bd=0$. Suppose for the sake of contradiction that at least one of $b$ and $d$ is nonzero; without loss of generality assume $b\neq0$. Then $d=0$ and $c=0$, a contradiction. Hence $b=d=0$, so $ac=5$, but $5$ has no nontrivial factorization into integers. Hence $5(1+\phi)=(2+\phi)(2+\phi)$ implies $\nn_0[\phi]^{\ast}$ is not UF, as the atomic decomposition of $5(1+\phi)$ in $\nn_0[\phi]^*$ will include $5$, while $5>2+\phi$ implies the decomposition of $(2+\phi)(2+\phi)$ will not.
\end{example}

\begin{example}\label{ex:sqrt2}
    We will show that the positive semidomain $\nn_0[\sqrt2]$ is not bi-UFS. We first prove that $3+\sqrt2\in\atoms^\times(\nn_0[\sqrt2])$. Suppose $(a+b\sqrt{2})(c+d\sqrt{2})=3+\sqrt2$ for nonnegative integers $a$, $b$, $c$, and $d$. Then we must have $ac+2bd=3$ and $ad+bc=1$. We must have $a,c\neq0$ because $2bd=3$ has no solution. Therefore at least one of $b$ and $d$ is equal to $0$ because exactly one of $ad$ and $bc$ is $0$, so $bd=0$. Then the equation $ac=3$ implies either $a$ or $c$ equals $1$. Suppose without loss of generality that $a=1$, so $c=3$. If $b>0$ then $ad+bc>1$, contradiction. Thus we must have $b=0$. However, this implies $a+b\sqrt{2}=1$, showing that every factorization of $3+\sqrt2$ must be trivial. Consider the equation $7(1+\sqrt2)=(1+2\sqrt2)(3+\sqrt2)$. Since $(3+\sqrt{2})(3-\sqrt{2})=7$ and $3-\sqrt{2}\notin\nn_0[\sqrt2]$, we know $3+\sqrt2$ does not multiplicatively divide $7$. We also know $3+\sqrt2$ does not multiplicatively divide $1+\sqrt{2}$ since $3+\sqrt2>1+\sqrt{2}$ and $\nn_0[\sqrt2]\cap(0,1)=\emptyset$. Since the decomposition of $7(1+\sqrt2)$ into atoms will not include the atom $(3+\sqrt2)$, but that of $(1+2\sqrt2)(3+\sqrt2)$ does, we know that $\nn_0[\sqrt{2}]^\ast$ is not a UFM.
\end{example}

Finally, we prove the main theorem of this section.

\begin{theorem} \label{thm:deg2}
Let $\alpha>0$ be algebraic of degree 2. Then $\nn_0[\alpha]$ is not bi-UFS.
\end{theorem}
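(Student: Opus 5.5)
Suppose for contradiction that $S=\nn_0[\alpha]$ is a bi-UFS with $\alpha$ of degree $2$. The plan is to pin down the minimal polynomial so tightly that only $\alpha=\sqrt2$ and $\alpha=\phi$ survive, and then invoke \Cref{ex:sqrt2} and \Cref{ex:golden_ratio}.

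First, since $(S,+)$ is a UFM, \Cref{thm:equivalent_conditions_UFM} gives $m_\alpha(x)=x^2-bx-c$ with $b,c\in\nn_0$ and $\atoms^+(S)=\{1,\alpha\}$. In particular $\ell=\min(\atoms^+(S)\setminus\{1\})=\alpha$ exists. By \Cref{lem:no_elements_between_zero_and_one}, $\alpha\ge 1$, and $\alpha\neq 1$ since its degree is $2$, so $\alpha>1$. \Cref{lem:final_containment} then yields $1<\alpha\le\phi$.

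Next I restrict $(b,c)$. The root is $\alpha=\bigl(b+\sqrt{b^2+4c}\bigr)/2$, and $c\ge1$ because otherwise $\alpha=b$ would be rational. The condition $\alpha\le\phi$ means $\alpha^2=b\alpha+c\le\phi^2=\phi+1$. If $b\ge1$, then $b\alpha+c>1+1=2$ would have to be at most $\phi+1<3$, which forces $b=1$ and $c=1$, so $\alpha=\phi$. If $b=0$, then $\alpha=\sqrt c\le\phi$ forces $c\le 2$. The value $c=1$ gives $\alpha=1$, which is excluded, so $c=2$ and $\alpha=\sqrt2$.

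Finally, \Cref{ex:golden_ratio} and \Cref{ex:sqrt2} show that $\nn_0[\phi]$ and $\nn_0[\sqrt2]$ are not bi-UFS, a contradiction. The main obstacle has already been handled by the earlier interval lemmas; the only care needed here is to check that the hypothesis ``$\ell$ exists'' holds and that the short case analysis on $(b,c)$ is exhaustive.
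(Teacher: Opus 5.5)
Your proposal is correct and follows the paper's proof essentially step for step. Both arguments use \Cref{thm:equivalent_conditions_UFM} to reduce to $m_\alpha(x)=x^2-bx-c$, then \Cref{lem:no_elements_between_zero_and_one} and \Cref{lem:final_containment} to get $1<\alpha\le\phi$, and finally a short case analysis on the coefficients that leaves only $\sqrt2$ and $\phi$, which \Cref{ex:sqrt2} and \Cref{ex:golden_ratio} exclude. Your case split (first forcing $c\ge1$, then $b\ge1$ versus $b=0$) differs from the paper's only cosmetically.
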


\begin{proof}
Suppose for the sake of contradiction $\nn_0[\alpha]$ is bi-UFS for some positive algebraic number $\alpha$ of degree $2$. By~\Cref{thm:equivalent_conditions_UFM}, since $\nn_0[\alpha]$ is a UFM under addition, $m_\alpha(x)=x^2-ax-b$ for nonnegative integers $a$ and $b$. By~\Cref{lem:no_elements_between_zero_and_one}, we know that $\alpha>1$. Hence the least additive atom of $\nn_0[\alpha]$ larger than 1 is $\alpha$, so it must be true that $1<\alpha\le \phi$ from~\Cref{lem:final_containment}. If $a=0$, the inequality $1<\alpha\le \phi$ forces $b=2$, but Example~\ref{ex:sqrt2} shows that $\nn_0[\sqrt2]$ is not bi-UFS. If $b=0$ then $m_\alpha(x)$ is reducible and hence not a minimal polynomial. If $a,b\geq1$ then because $\alpha^2-a\alpha-b=0$, it follows that $\alpha^2-\alpha-1\geq0$, so $\alpha\ge\phi$, which only allows $\alpha=\phi$. However, Example~\ref{ex:golden_ratio} shows that $\nn_0[\phi]$ is not bi-UFS. Therefore $\nn_0[\alpha]$ cannot be bi-UFS when $\alpha$ is a positive algebraic number of degree 2.
\end{proof}

\bigskip

\section{Algebraic Monogenic Semidomains of Degree at Least Three}\label{sec:deg3up}

In this section we extend the degree-two result to all algebraic
degrees at least $3$. We proceed in two stages. First, we
establish two lemmas about $\nn_0[\alpha]$ that require only
the additive monoid to be a UFM. These let us extract structural
information about $q_\alpha(x)$ from the failure of certain elements
to be multiplicative atoms. Second, we use these tools to restrict
$m_\alpha(x)$ down to a short list of candidate forms, and then
eliminate each candidate in turn.

\subsection{Preliminary Lemmas} We begin with two lemmas that place no assumption on the multiplicative monoid $\nn_0[\alpha]^\ast$, not even atomicity. The first identifies a sufficient condition for an element
of $\nn_0[\alpha]$ of small additive length to fail to be a
multiplicative atom.

\begin{lemma}\label{lem:alpha_divides_F}
    Let $\alpha$ be a positive algebraic number such that $(\nn_0[\alpha], +)$ is a UFM. 
    Suppose $F\in \nn_0[\alpha]$ has an additive length of at most $3$, is not an atom of $\nn_0[\alpha]^*$, and is not a unit of $\nn_0[\alpha]^*$. Then $\alpha$ multiplicatively divides $F$.
\end{lemma}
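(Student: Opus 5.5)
The plan is to bound additive lengths under multiplication using the additive normal form from \Cref{thm:equivalent_conditions_UFM}. Since $F$ is neither an atom nor a unit of $\nn_0[\alpha]^\ast$, we can write $F = xy$ with $x, y \in \nn_0[\alpha]^\ast$ both nonunits. Let $g(x), h(x) \in \nn_0[x]$ be the additive normal forms of $x$ and $y$. Their additive lengths are $g(1)$ and $h(1)$. The goal is to show that $g(1)h(1) \le |F| \le 3$, so that one factor, say $x$, has additive length $1$. Then $x = \alpha^k$ for some $k \in \llbracket 0, d-1 \rrbracket$, because the additive atoms are exactly $1, \alpha, \ldots, \alpha^{d-1}$. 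Since $x$ is a nonunit, $x \ne 1$, so $k \ge 1$. Hence $\alpha \mid_{\nn_0[\alpha]^\ast} \alpha^k = x$, and $x \mid_{\nn_0[\alpha]^\ast} F$, which gives $\alpha \mid_{\nn_0[\alpha]^\ast} F$.

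The key step, and the only real obstacle, is the length inequality. It replaces \Cref{lem:length_inequality}, which we cannot use here because multiplicative unique factorization is not assumed. By \Cref{thm:equivalent_conditions_UFM}, $m_\alpha(x) = x^d - q_\alpha(x)$ with $q_\alpha(x) \in \nn_0[x]$. Moreover $q_\alpha \neq 0$, since otherwise $\alpha = 0$; hence $q_\alpha(1) \ge 1$.

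Take any $f(x) \in \nn_0[x]$ with $f(\alpha) = F$, for example $f = gh$, which has coefficient sum $g(1)h(1)$. I will reduce $f$ to the additive normal form of $F$ in two stages:
\begin{enumerate}
  \item Repeatedly replace a monomial $x^{j+d}$ by $x^j q_\alpha(x)$. Each such replacement keeps the coefficients in $\nn_0$, preserves the value at $\alpha$, and changes the coefficient sum by $q_\alpha(1) - 1 \ge 0$.
  \item Since $\deg q_\alpha < d$, each replacement strictly decreases the multiset of exponents that are at least $d$ (for instance, under the ordering by largest exponent and multiplicity). So the process terminates in a polynomial of degree at most $d-1$.
\end{enumerate}
The terminal polynomial represents $F$ and has degree at most $d-1$. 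By the uniqueness part of \Cref{thm:equivalent_conditions_UFM}, it is the additive normal form of $F$, so its coefficient sum is $|F|$. Since the coefficient sum never decreased along the way, $|F| \ge f(1) = g(1)h(1)$.

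It then remains to observe that $g(1), h(1) \ge 1$ because $x, y \ne 0$. The product $g(1)h(1) \le 3$ therefore forces $\min\{g(1), h(1)\} = 1$, and we conclude as in the first paragraph. I expect the termination and monotonicity bookkeeping in the reduction to be the only point needing care; everything else is immediate from \Cref{thm:equivalent_conditions_UFM}.
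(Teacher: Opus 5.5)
Your proposal is correct and follows the paper's proof: you factor $F=XY$ into nonunits, use $|XY|\ge |X||Y|$ to force one factor to have additive length $1$, and conclude that this factor equals $\alpha^r$ with $1\le r\le d-1$. The only difference is that you re-derive the length inequality by reducing $gh$ via $x^{j+d}\mapsto x^j q_\alpha(x)$ rather than citing \Cref{lem:length_inequality}; your caution about that lemma's stated bi-UFS hypothesis is fair, though its proof only uses that $(\nn_0[\alpha],+)$ is a UFM, which is why the paper can apply it here.
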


\begin{proof}
    Since $F$ is not a multiplicative atom nor a multiplicative unit, we can write $F=XY$ for $X,Y\in\nn_0[\alpha]$ that are not multiplicative units. Since $|F|\leq 3$ and $|F|=|XY|\geq |X||Y|$ by~\Cref{lem:length_inequality}, one of $X,Y$ has additive length $1$. Without loss of generality, suppose $X$ has additive length $1$, so $X=\alpha^r$ for some $0\leq r\leq d-1$ because $\atoms^+(\nn_0[\alpha])=\{1,\alpha,\ldots,\alpha^{d-1}\}$ by~\Cref{thm:equivalent_conditions_UFM}. If $r=0$ then $X=1$, a multiplicative unit. Thus, we know $r\geq 1$. Hence $\alpha$ multiplicatively divides $F$.
\end{proof}

Applying~\Cref{lem:alpha_divides_F}, the next lemma uses the information about a single element $F$ to produce a coefficient bound on $q_\alpha(x)$.

\begin{lemma}\label{lem:conditions_on_a_non_atom_to_classification_of_q_alpha}
    Let the additive monoid of $\nn_0[\alpha]$ be a UFM for some positive algebraic $\alpha$. Suppose $F\in \nn_0[\alpha]$ is additively divisible by $1$, has an additive length of at most $3$, is not an atom of $\nn_0[\alpha]^*$, and is not a unit of $\nn_0[\alpha]^*$. Then the following statements hold.
    \begin{enumerate}
        \item The sum of the coefficients in $q_\alpha(x)$ is less than $4$.
        \smallskip
        \item If every coefficient in $F$ is less than $s$ for some positive integer $s\geq2$ then every coefficient in $q_\alpha(x)$ is less than $s$.
    \end{enumerate}
\end{lemma}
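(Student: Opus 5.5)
By \Cref{lem:alpha_divides_F}, $\alpha$ divides $F$ in $\nn_0[\alpha]^\ast$. So I will write $F=\alpha G$ with $G\in\nn_0[\alpha]$ and compare additive normal forms on both sides. Let $d=\deg m_\alpha(x)$. By \Cref{thm:equivalent_conditions_UFM}, every element has a unique normal form of degree at most $d-1$ with coefficients in $\nn_0$, and $\alpha^d=q_\alpha(\alpha)$ with $q_\alpha(x)\in\nn_0[x]$ of degree at most $d-1$. Let $f(x)$ and $g(x)$ be the normal forms of $F$ and $G$. The hypotheses say that $f(0)\ge 1$, since $1$ additively divides $F$, and that the coefficients of $f$ sum to at most $3$.

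First I rule out $\deg g<d-1$. In that case $xg(x)$ has degree at most $d-1$ and nonnegative coefficients, so it is the normal form of $\alpha G=F$. By uniqueness $f(x)=xg(x)$, which has zero constant term. This contradicts $f(0)\ge 1$.

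Hence $g(x)=c\,x^{d-1}+g_0(x)$ with $c\ge 1$ and $\deg g_0\le d-2$. Then
\[
F=\alpha G=c\,\alpha^d+\alpha g_0(\alpha)=c\,q_\alpha(\alpha)+\alpha g_0(\alpha).
\]
The polynomial $c\,q_\alpha(x)+xg_0(x)$ lies in $\nn_0[x]$ and has degree at most $d-1$, so it is the normal form of $F$. Uniqueness gives
\[
f(x)=c\,q_\alpha(x)+xg_0(x).
\]
Since all coefficients involved are nonnegative, each coefficient of $c\,q_\alpha(x)$ is at most the corresponding coefficient of $f$.

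Both statements now follow from this coefficientwise inequality and $c\ge1$. For (1), the coefficient sum of $q_\alpha$ is at most $\tfrac1c$ times the coefficient sum of $f$, hence at most $3<4$. For (2), each coefficient of $q_\alpha$ is at most the corresponding coefficient of $f$, which is less than $s$.

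The only delicate point is the first step, showing that $g$ must reach degree $d-1$. It uses both the uniqueness of normal forms and the constant-term hypothesis on $F$; after that the argument is a direct comparison of coefficients.
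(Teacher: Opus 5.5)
Your proof is correct, and it takes a cleaner route than the paper's. The paper keeps the factorization $F=\alpha^rY$ from \Cref{lem:alpha_divides_F} and uses the constant-term hypothesis to locate a term $c_i\alpha^{n}$ with $n=r+i\ge d$. Because $n$ can be as large as $2d-2$, it then has to follow that term through possibly several reductions. For (1) it argues that each reduction multiplies that term's contribution to the length by the coefficient sum of $q_\alpha$. For (2) it tracks a chain of terms with coefficients $a_j^m$ until the exponent drops below $d$. You instead peel off a single factor of $\alpha$ and write $G$ in normal form, so at most one application of $\alpha^d=q_\alpha(\alpha)$ is needed. This yields the exact identity $f(x)=c\,q_\alpha(x)+x\,g_0(x)$ with $c\ge 1$. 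The identity gives a sharper conclusion with no iteration: $q_\alpha$ is dominated coefficientwise by the normal form of $F$. In particular its coefficient sum is at most $|F|/c\le 3$ and $\supp(q_\alpha)\subseteq\supp(f)$. The same one-step computation is exactly what the paper redoes later in \Cref{lem:condition_for_alpha_not_multiplicative_divisor} (where $f_0=c_{d-1}$ and $f_s=c_{s-1}+c_{d-1}$), so your version would let those two lemmas be merged. The paper's iterated-reduction bookkeeping gains no extra generality here; it handles the same wraparound less efficiently.
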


\begin{proof}
    Let $d$ be the degree of the minimal polynomial of $\alpha$. We know by~\Cref{thm:equivalent_conditions_UFM} that since $\nn_0[\alpha]$ is a UFM under addition, we know $\atoms^+(\nn_0[\alpha])=\{1,\alpha,\ldots,\alpha^{d-1}\}$, the minimal polynomial $m_\alpha(x)=x^d-q_\alpha(x)$, where
    \[
        q_\alpha(x)=a_{d-1}x^{d-1}+\cdots+a_1x+a_0\in\nn_0[x],
    \]
    and the additive length of an element $a\in \nn_0[\alpha]$ is the sum of the coefficients of the unique polynomial in $\nn_0[x]$ with degree at most $d-1$ that evaluates to $a$ when $x=\alpha$. By~\Cref{lem:alpha_divides_F}, we know that there exists some $Y\in\nn_0[\alpha]$ and integer $r$ satisfying $1\leq r\leq d-1$ such that $F=\alpha^rY$. We may write $Y=\sum_{i=0}^{d-1}c_i\alpha^i$ for $c_i\in\nn_0$. If $r+i<d$ for every $i$ with $c_i>0$ then $F=\sum_{i=0}^{d-1}c_i\alpha^{r+i}$ is already written with exponents below $d$ and every exponent is positive, so the constant coefficient of $F$ is $0$. However $1$ additively divides $F$, and because all atoms are primes in a UFM, then $1$ additively divides one of the atoms from $\{\alpha,\alpha^2,\ldots,\alpha^{d-1}\}$, a contradiction. Therefore there is some $i$ with $c_i>0$ such that $r+i\geq d$. Fix such an $i$ and let $n=r+i$.
    
\smallskip
    \noindent\emph{(1).} Suppose for the sake of contradiction that $a_0+a_1+\cdots+a_{d-1}\geq 4$. We may reduce the exponents in the term $c_i\alpha^n$ as $c_i\alpha^n=c_i\alpha^{n-d}q(\alpha)$.
    The sum of the coefficients on the right hand side is
        \[
            c_i(a_0+a_1+\cdots+a_{d-1})\geq4.
        \]
        Now consider a later reduction step. A term $M\alpha^e$ with $e\geq d$ is replaced by
        \[ M\alpha^{e-d}q(\alpha)=\sum_{h=0}^{d-1}Ma_h\alpha^{e-d+h}.\]
        This changes the sum of coefficients contributed by that term from $M$ to
        \[ M(a_0+a_1+\cdots+a_{d-1}),\]
        which is at least $M$. Therefore after enough reductions to bring all exponents on $\alpha$ to be at most $d-1$, the term $c_i\alpha^n$ of $Y$ contributes an additive length of at least $4$. Therefore $|F| \ge 4$, a contradiction. Hence we must have $a_0+a_1+\cdots+a_{d-1}<4$.
        
        \smallskip
        \noindent\emph{(2).} Suppose for the sake of contradiction that there exists a positive integer $s\geq2$ such that $q_\alpha(x)$ has a coefficient that is at least $s$ and every coefficient in $F$ is less than $s$. We may choose some $j$ with $0\leq j\leq d-1$ and $a_j\geq s$. We will show that for every $n\geq d$, the expression of $\alpha^n$ as a sum of $1,\alpha,\ldots,\alpha^{d-1}$ has some coefficient at least $s$. Indeed, reducing once gives
        \[
            \alpha^n=\alpha^{n-d}q_\alpha(\alpha)=\sum_{h=0}^{d-1}a_h\alpha^{n-d+h}.
        \]
        In particular, this contains the term $a_j\alpha^{n-d+j}$. If $n-d+j<d$ then we already have a coefficient at least $a_j\geq s$ on a term of degree less than $d$. Otherwise, we may reduce this term again. Among the terms produced is $a_j^2\alpha^{n-2d+2j}$. Continuing like this after $m$ such steps, a term $a_j^m\alpha^{n-md+mj}$ occurs. The exponent drops by $d-j\geq1$ at each step, so for some $m\ge 1$, the exponent $n-md+mj$ is between $0$ and $d-1$, with a coefficient $a_j^m\geq s$. Since every coefficient in every reduction is nonnegative, no cancellation can remove this coefficient.
    
        Applying this to $c_i\alpha^n$, we know writing $c_i\alpha^n$ as a sum of $1,\alpha,\ldots,\alpha^{d-1}$ will have some coefficient at least $s$ because $n \geq d$. Adding the other terms of $XY$ only adds nonnegative coefficients so writing $XY$ as a sum of $1,\alpha,\ldots,\alpha^{d-1}$ will have some coefficient at least $s$, a contradiction. Hence no such $s$ may exist.
\end{proof}

\subsection[Proof of the degree at least 3 case]{Proof of the degree-\texorpdfstring{$\ge 3$}{at least 3} case}
We now turn to the bi-UFS hypothesis and use it together with the
above lemmas to constrain $m_\alpha(x)$. The main tool is a non-atom in $\nn_0[\alpha]^\ast$ supplied by an algebraic identity, generalizing the identity used throughout the $\ell$-analysis of the previous section.

\begin{lemma}\label{lem:1+alpha^r_not_multiplicative_atom}
    Suppose $\nn_0[\alpha]$ is bi-UFS for some positive algebraic $\alpha$ of degree $d$. For every positive integer $r$ satisfying $2r<d$, at least one of $1+\alpha^r+\alpha^{2r}$ and $1+\alpha^r$ is not a multiplicative atom.
\end{lemma}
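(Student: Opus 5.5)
Suppose, for contradiction, that both $A:=1+\alpha^r+\alpha^{2r}$ and $B:=1+\alpha^r$ are multiplicative atoms of $\nn_0[\alpha]^\ast$. Atoms are prime in the UFM $\nn_0[\alpha]^\ast$ (\Cref{lem:one_atom}), and the parametric identity
\[
  (1+\alpha^r+\alpha^{2r})(1+\alpha^{3r}) = (1+\alpha^r)(1+\alpha^{2r}+\alpha^{4r})
\]
gives $B \mid_{S^\ast} A(1+\alpha^{3r})$. So either $B \mid_{S^\ast} A$, or $B \mid_{S^\ast} 1+\alpha^{3r}$. The plan is to rule out both. Throughout, $S^\ast$ is reduced by \Cref{lem:multiplicative_monoid_reduced}, $\alpha>1$ by \Cref{lem:no_elements_between_zero_and_one}, and elements have additive normal forms in the basis $1,\alpha,\dots,\alpha^{d-1}$ by \Cref{thm:equivalent_conditions_UFM}.

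\emph{Case $B\mid A$.} Since $A$ is an atom and $S^\ast$ is reduced, $B\mid_{S^\ast} A$ forces $B=A$ or $B=1$. Both are false: $A-B=\alpha^{2r}>0$ and $B>1$. This case is immediate.

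\emph{Case $B\mid 1+\alpha^{3r}$.} Then $1-\alpha^r+\alpha^{2r}=(1+\alpha^{3r})/B$ lies in $S$, and so $T:=1+\alpha^{2r}-\alpha^r\in S$. Adding $\alpha^r$ gives $T+\alpha^r = 1+\alpha^{2r}$. Because $2r<d$, the right side is already in additive normal form, with atoms $1$ and $\alpha^{2r}$ each appearing once. The left side contains the atom $\alpha^r$, and $\alpha^r\notin\{1,\alpha^{2r}\}$ since $r\ge1$. This contradicts additive unique factorization, exactly as in Case A of \Cref{lem:ell_is_not_between_phi_and_2}.

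Both cases fail, so at least one of $A$ and $B$ is not a multiplicative atom. The only step that needs care is the reduction in the second case, specifically checking that $1+\alpha^{2r}$ has additive normal form consisting of the distinct atoms $1$ and $\alpha^{2r}$. This is exactly where the hypothesis $2r<d$ is used. I expect no real obstacle: the argument is essentially the Case A argument from \Cref{sec:deg2}, with $\ell$ replaced by $\alpha^r$.
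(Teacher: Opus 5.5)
Your proposal is correct and follows the paper's proof essentially step for step. Like the paper, you use the identity and the primality of $1+\alpha^r$, exclude $1+\alpha^r \mid 1+\alpha^r+\alpha^{2r}$ via reducedness and atomicity, and then derive the additive contradiction from $\alpha^r$ dividing $1+\alpha^{2r}$ using $2r<d$. Your treatment of the first case is just a slightly more explicit version of the paper's one-line remark.
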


\begin{proof}
    Consider the algebraic identity
    \[
        (1+\alpha^r+\alpha^{2r})(1+\alpha^{3r})=(1+\alpha^r)(1+\alpha^{2r}+\alpha^{4r}).
    \]
    Suppose for the sake of contradiction that both $1+\alpha^r+\alpha^{2r}$ and $1+\alpha^r$ are multiplicative atoms. Since $1+\alpha^r\neq1+\alpha^r+\alpha^{2r}$ and $\nn_0[\alpha]^*$ is reduced by~\Cref{lem:multiplicative_monoid_reduced}, we know $1+\alpha^r$ does not multiplicatively divide $1+\alpha^r+\alpha^{2r}$. Since $\nn_0[\alpha]^\ast$ is a UFM, we know $1+\alpha^r$ is prime in $\nn_0[\alpha]^\ast$. Therefore $1+\alpha^r$ must multiplicatively divide $1+\alpha^{3r}$, so $\alpha^{2r}-\alpha^r+1\in \nn_0[\alpha]$. Since $(\alpha^{2r}-\alpha^r+1)+\alpha^r=\alpha^{2r}+1$, we know that $\alpha^r$ additively divides $\alpha^{2r}+1$.
    
    Since the additive monoid $\nn_0[\alpha]$ is a UFM and $2r<d$, we know by~\Cref{thm:equivalent_conditions_UFM} that $\{1,\alpha^r,\alpha^{2r}\}\subseteq \atoms(\nn_0[\alpha])$. However, since $\alpha^r$ additively divides $\alpha^{2r}+1$, this contradicts that the additive monoid is a UFM. Therefore at least one of $1+\alpha^r+\alpha^{2r}$ and $1+\alpha^r$ is not a multiplicative atom.
\end{proof}

By~\Cref{lem:1+alpha^r_not_multiplicative_atom} and the coefficient bound from~\Cref{lem:conditions_on_a_non_atom_to_classification_of_q_alpha}, we restrict $q_\alpha(x)$ to a short list of polynomials.

\begin{lemma}\label{lem:deg_3_plus_min}
    Let positive $\alpha$ be algebraic of degree $d\ge 3$, and suppose $\nn_0[\alpha]$ is bi-UFS. Then
    \[ m_\alpha(x)=x^d-x^i-1 \hspace{.5 cm} \text{or} \hspace{.5 cm}  m_\alpha(x)=x^d-x^j-x^i-1\]
    for some $1\leq i<j\leq d-1$.
\end{lemma}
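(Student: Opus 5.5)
The plan is to apply \Cref{lem:conditions_on_a_non_atom_to_classification_of_q_alpha} to a suitable non-atom supplied by \Cref{lem:1+alpha^r_not_multiplicative_atom}, and then clean up with irreducibility of $m_\alpha(x)$.

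Since $d\ge 3$, the choice $r=1$ satisfies $2r<d$. So \Cref{lem:1+alpha^r_not_multiplicative_atom} says that at least one of $F=1+\alpha$ or $F=1+\alpha+\alpha^2$ is not a multiplicative atom. We check the hypotheses of \Cref{lem:conditions_on_a_non_atom_to_classification_of_q_alpha} for this $F$.
\begin{enumerate}
  \item Because $d\ge 3$, both expressions are already additive normal forms: their exponents are at most $2\le d-1$. Hence $F$ has additive length $2$ or $3$.
  \item Every coefficient of $F$ equals $1$.
  \item $F$ is visibly additively divisible by $1$.
  \item $F$ is not a multiplicative unit, since $S^\ast$ is reduced by \Cref{lem:multiplicative_monoid_reduced} and $F>1$.
\end{enumerate}
Applying the lemma with $s=2$, part (1) gives that the coefficient sum of $q_\alpha(x)$ is at most $3$. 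Part (2) gives that every coefficient of $q_\alpha(x)$ is at most $1$. So $q_\alpha(x)$ is a sum of at most three distinct monomials, each with coefficient $1$.

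Next we use irreducibility of $m_\alpha(x)=x^d-q_\alpha(x)$. If the constant coefficient $a_0$ were $0$, then $x$ would divide $m_\alpha(x)$. Since $d\ge 3$, the polynomial $m_\alpha(x)$ is not $x$ itself, so this contradicts irreducibility. The same argument excludes $q_\alpha=0$. Hence $a_0=1$, and $q_\alpha(x)$ is one of $1$, $1+x^i$, or $1+x^i+x^j$ with $1\le i<j\le d-1$. The case $q_\alpha=1$ gives $m_\alpha(x)=x^d-1$, which is divisible by $x-1$ and hence reducible for $d\ge 3$. The two remaining shapes are exactly those in the statement.

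The only step needing care is the verification of the hypotheses of \Cref{lem:conditions_on_a_non_atom_to_classification_of_q_alpha}. In particular, one must use $d\ge 3$ to know that $1+\alpha+\alpha^2$ is already in normal form with all coefficients below $2$. The rest is routine.
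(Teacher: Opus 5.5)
Your proposal is correct and is essentially the paper's own proof: you apply \Cref{lem:conditions_on_a_non_atom_to_classification_of_q_alpha} with $s=2$ to whichever of $1+\alpha$ and $1+\alpha+\alpha^2$ fails to be a multiplicative atom (using \Cref{lem:1+alpha^r_not_multiplicative_atom} with $r=1$), and then use irreducibility to force $a_0=1$. The only cosmetic difference is how $q_\alpha=1$ is excluded: you note that $x-1$ divides $x^d-1$, whereas the paper observes that $\alpha^d=1$ would force $\alpha=1$.
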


\begin{proof}
Suppose $S=\nn_0[\alpha]$ is bi-UFS. We know by~\Cref{thm:equivalent_conditions_UFM} that since $\nn_0[\alpha]$ is a UFM under addition, we know $\atoms^+(\nn_0[\alpha])=\{1,\alpha,\ldots,\alpha^{d-1}\}$ and the minimal polynomial $m_\alpha(x)=x^d-q_\alpha(x)$, where
    \[
        q_\alpha(x)=a_{d-1}x^{d-1}+\cdots+a_1x+a_0\in\nn_0[x].
    \]

By~\Cref{lem:conditions_on_a_non_atom_to_classification_of_q_alpha}, we know that since $1+\alpha+\alpha^2$ and $1+\alpha$ are both additively divisible by $1$, both have an additive length of at most $3$ since $d\ge 3$, both have every coefficient less than $2$, and both are not multiplicative units because of~\Cref{lem:multiplicative_monoid_reduced}, and at least one of them is not a multiplicative atom by~\Cref{lem:1+alpha^r_not_multiplicative_atom} since $2<d$, then the sum of the coefficients of $q_\alpha(x)$ is less than $4$ and every coefficient in $q_\alpha(x)$ is less than $2$. Since $m_\alpha$ is irreducible, we know $a_0\geq1$, so $a_0=1$. If the sum of the coefficients of $q_\alpha(x)$ is $1$ then $q_\alpha(x)=1$. Then $\alpha^d=1$, forcing $\alpha=1$, a contradiction. Hence the sum of the coefficients of $q_\alpha(x)$ is $2$ or $3$. Therefore $q_\alpha(x)=1+x^i$ or $q_\alpha(x)=1+x^i+x^j$ for some $1\leq i<j\leq d-1$.
\end{proof}

The next two lemmas show when $\alpha$ fails to divide an element of small additive length, which we then use to derive a contradiction.

\begin{lemma}\label{lem:condition_for_alpha_not_multiplicative_divisor}
    Let $\nn_0[\alpha]$ be bi-UFS for some positive algebraic $\alpha$ of degree $d$. Suppose $q_\alpha(x)$ only contains coefficients less than $2$. Let $F\in\nn_0[\alpha]$ be written as $F=f_0+f_1\alpha+\cdots+f_{d-1}\alpha^{d-1}$ for nonnegative integer coefficients $f_i$. If some $s\in \supp(q_\alpha(x)) \setminus \{0\}$ satisfies $f_s<f_0$ then $\alpha$ is not a multiplicative divisor of $F$.
\end{lemma}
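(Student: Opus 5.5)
The plan is to argue by contraposition. Assume $\alpha \mid_{\nn_0[\alpha]^\ast} F$, and show that $f_s \ge f_0$ for every $s \in \supp(q_\alpha(x))\setminus\{0\}$. The whole argument rests on the uniqueness of the additive normal form from~\Cref{thm:equivalent_conditions_UFM}. The representation $F=\sum_{i=0}^{d-1} f_i\alpha^i$ has degree at most $d-1$, so it is the additive normal form of $F$. Its coefficients can therefore be read off from any other expression of $F$ as a nonnegative combination of $1,\alpha,\ldots,\alpha^{d-1}$.

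\emph{Step 1: pin down the constant coefficient of $q_\alpha$.} Write $q_\alpha(x)=\sum_{h=0}^{d-1}a_hx^h$.
\begin{itemize}
\item We may assume $d\ge 2$. Otherwise $\supp(q_\alpha(x))\setminus\{0\}=\emptyset$ and the statement is vacuous.
\item Irreducibility of $m_\alpha(x)=x^d-q_\alpha(x)$ forces $a_0\ge 1$; otherwise $x$ divides $m_\alpha(x)$.
\item The hypothesis that all coefficients of $q_\alpha$ are less than $2$ then gives $a_0=1$.
\end{itemize}

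\emph{Step 2: compute the normal form of $\alpha Y$.} Write $F=\alpha Y$ with $Y=\sum_{i=0}^{d-1}c_i\alpha^i$ in additive normal form, where $c_i\in\nn_0$. Multiplying by $\alpha$ gives
\[
F=\sum_{i=0}^{d-2}c_i\alpha^{i+1}+c_{d-1}\alpha^d=\sum_{i=0}^{d-2}c_i\alpha^{i+1}+c_{d-1}\sum_{h=0}^{d-1}a_h\alpha^h .
\]
All exponents here are at most $d-1$ and all coefficients are nonnegative integers, so this is the additive normal form of $F$. Comparing coefficients with $F=\sum f_i\alpha^i$ yields:
\begin{itemize}
\item for the constant term, $f_0=c_{d-1}a_0=c_{d-1}$;
\item for each $s\in\supp(q_\alpha(x))$ with $s\ge 1$, $f_s=c_{s-1}+c_{d-1}a_s\ge c_{d-1}=f_0$, using $a_s\ge 1$.
\end{itemize}
This contradicts the hypothesis $f_s<f_0$, so $\alpha$ does not divide $F$ multiplicatively.

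There is no real obstacle. The only point needing care is that the equality $f_0=c_{d-1}$ uses $a_0=1$ exactly: if $a_0\ge 2$, then $f_0$ could exceed every $f_s$. This is precisely where the coefficient bound on $q_\alpha$ enters. The multiplicative part of the bi-UFS hypothesis is not needed beyond the fact that $(\nn_0[\alpha],+)$ is a UFM.
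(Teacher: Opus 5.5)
Your proof is correct and follows the paper's argument. Like the paper, you write $F=\alpha Y$, reduce $c_{d-1}\alpha^d$ using $q_\alpha$, and compare additive normal forms to obtain $f_0=c_{d-1}$ and $f_s=c_{s-1}+c_{d-1}\ge f_0$. You also justify explicitly, via irreducibility and the coefficient bound, that the constant coefficient of $q_\alpha(x)$ equals $1$, and you dispose of $d=1$ as vacuous; the paper uses the former tacitly when it writes $\alpha^d=1+\sum_{s}\alpha^s$.
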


\begin{proof}
    Suppose for the sake of contradiction that $F$ is multiplicatively divisible by $\alpha$. Then
    \[
        F=\alpha(c_0+c_1\alpha+\cdots+c_{d-1}\alpha^{d-1})
    \]
    for nonnegative integer coefficients $c_i$. Let $S:=\supp(q_\alpha(x))\setminus \{0\}$. Since $\alpha^d=1+\sum_{s\in S}\alpha^s$, we know
    \[
        F=c_{d-1}\left(1+\sum_{s\in S}\alpha^s\right)+c_0\alpha+c_1\alpha^2+\cdots+c_{d-2}\alpha^{d-1}.
    \]
    Hence $c_{d-1}=f_0$ since the coefficients $f_i$ are unique because the additive monoid $\nn_0[\alpha]$ is a UFM. Therefore $f_s=c_{s-1}+c_{d-1}\geq f_0$, a contradiction. Consequently, $\alpha$ does not multiplicatively divide $F$.
\end{proof}

\begin{lemma}\label{lem:S_is_subset_of_r_and_2r}
    Let $\nn_0[\alpha]$ be bi-UFS for some positive algebraic $\alpha$ of degree $d$. Suppose $q_\alpha(x)$ only contains coefficients less than $2$. Then for every positive integer $r$ satisfying $2r<d$, the set $\supp(q_\alpha(x)) \setminus \{0 \}$ is a subset of $\{r,2r\}$.
\end{lemma}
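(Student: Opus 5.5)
Fix a positive integer $r$ with $2r<d$ and set $F_1:=1+\alpha^r$ and $F_2:=1+\alpha^r+\alpha^{2r}$. Since $2r\le d-1$, both are already in additive normal form. Their constant coefficient is $f_0=1$, and every coefficient is at most $1$. By \Cref{lem:1+alpha^r_not_multiplicative_atom}, at least one of $F_1,F_2$ is not a multiplicative atom. Neither is a multiplicative unit, because $S^\ast$ is reduced by \Cref{lem:multiplicative_monoid_reduced} and $F_1,F_2\ne 1$. Each has additive length at most $3$. So whichever one fails to be an atom, call it $F$, satisfies the hypotheses of \Cref{lem:alpha_divides_F}, and therefore $\alpha\mid_{S^\ast}F$.

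Now apply \Cref{lem:condition_for_alpha_not_multiplicative_divisor} in contrapositive form; its hypothesis that $q_\alpha$ has coefficients less than $2$ is exactly our assumption. Since $\alpha$ divides $F$, every $s\in\supp(q_\alpha(x))\setminus\{0\}$ must satisfy $f_s\ge f_0=1$, so $s$ lies in the support of the normal form of $F$ with $s\ge1$. If $F=F_1$, this gives $s\in\{r\}$. If $F=F_2$, this gives $s\in\{r,2r\}$. In either case $\supp(q_\alpha(x))\setminus\{0\}\subseteq\{r,2r\}$, which is the claim.

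The only point needing care is that \Cref{lem:condition_for_alpha_not_multiplicative_divisor} is stated for the representation of $F$ with exponents at most $d-1$. This holds here precisely because $2r<d$, so the coefficient $f_s$ is read directly off $1+\alpha^r+\alpha^{2r}$, and it is zero for $s\notin\{0,r,2r\}$. I expect no real obstacle. The argument is a direct chaining of the three earlier lemmas, and the main thing to check is that \Cref{lem:alpha_divides_F} only needs additive UF, nonatomicity and nonunit status, all of which have been verified above.
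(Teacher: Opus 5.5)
Your proof is correct and essentially the paper's argument. It chains the same three lemmas: \Cref{lem:1+alpha^r_not_multiplicative_atom} gives a non-atom, \Cref{lem:alpha_divides_F} gives divisibility by $\alpha$, and \Cref{lem:condition_for_alpha_not_multiplicative_divisor} gives the coefficient criterion. The only difference is that the paper assumes an offending $s\notin\{r,2r\}$ and derives that both $1+\alpha^r$ and $1+\alpha^r+\alpha^{2r}$ would be atoms, whereas you run the same implications forward via the contrapositive.
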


\begin{proof}
    Suppose for the sake of contradiction that there exists a positive integer $r$ satisfying $2r<d$ for which $\supp(q_\alpha(x)) \setminus \{0\}$ is not a subset of $\{r,2r\}$. Then there is some $s\in \supp(q_\alpha(x)) \setminus \{0\}$ with $s\notin \{r,2r\}$. Let $F$ be one of the elements of $\{1+\alpha^r,1+\alpha^r+\alpha^{2r}\}$. Since the additive monoid $\nn_0[\alpha]$ is a UFM, we may write $F$ uniquely as \[ F=f_0+f_1\alpha+\cdots+f_{d-1}\alpha^{d-1}\] for nonnegative integer coefficients $f_i$. Note that $f_s=0$ and $f_0=1$ since $2r<d$ implies there is no algebraic manipulation needed to turn $F$ into that form. Since $f_s<f_0$, we know by~\Cref{lem:condition_for_alpha_not_multiplicative_divisor} that $\alpha$ does not multiplicatively divide $F$. The inequality $2r<d$ implies $F$ has an additive length of at most $3$,~\Cref{lem:multiplicative_monoid_reduced} implies $F$ is not a multiplicative unit, and $\alpha$ does not multiplicatively divide $F$, so therefore by~\Cref{lem:alpha_divides_F}, we know $F$ must be a multiplicative atom. However, by~\Cref{lem:1+alpha^r_not_multiplicative_atom}, we know that since $2r<d$, at least one of the elements in $\{1+\alpha^r,1+\alpha^r+\alpha^{2r}\}$ is not a multiplicative atom, a contradiction. Therefore no such $r$ can exist.
\end{proof}

We can now eliminate each candidate form for $m_\alpha(x)$ from~\Cref{lem:deg_3_plus_min}, completing the degree-$\ge 3$ case.

\begin{theorem} \label{thm:deg_3_and_up}
Let $\alpha>0$ be algebraic of degree $d\geq 3$. Then $\nn_0[\alpha]$ is not bi-UFS.
\end{theorem}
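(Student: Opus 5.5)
The plan is to argue by contradiction: assume $\nn_0[\alpha]$ is bi-UFS with $d\ge 3$, use the lemmas of this section to cut $m_\alpha(x)$ down to a finite list, and then rule out each member of the list.

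\emph{Step 1: reduce to a finite list.} By \Cref{lem:deg_3_plus_min}, $q_\alpha(x)=1+x^i$ or $q_\alpha(x)=1+x^i+x^j$. In particular every coefficient of $q_\alpha$ is less than $2$, so \Cref{lem:S_is_subset_of_r_and_2r} applies for every $r$ with $2r<d$.
\begin{itemize}
\item Taking $r=1$, which is allowed since $d\ge3$, gives $\supp(q_\alpha)\setminus\{0\}\subseteq\{1,2\}$.
\item If $d\ge5$, taking $r=2$ also gives $\supp(q_\alpha)\setminus\{0\}\subseteq\{2,4\}$. Hence the support away from $0$ is exactly $\{2\}$, so $q_\alpha=1+x^2$.
\item If $d\ge7$, taking $r=3$ gives $\supp(q_\alpha)\setminus\{0\}\subseteq\{3,6\}$, which is incompatible with $\{2\}$. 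So $d\ge7$ is impossible.
\end{itemize}
The surviving candidates are therefore:
\begin{itemize}
\item for $d=3$: $x^3-x-1$, $x^3-x^2-1$, $x^3-x^2-x-1$;
\item for $d=4$: $x^4-x-1$, $x^4-x^2-1$, $x^4-x^2-x-1$;
\item for $d=5,6$: $x^5-x^2-1$, $x^6-x^2-1$.
\end{itemize}
The polynomial $x^4-x^2-x-1$ has $-1$ as a root, so it is reducible and is discarded.

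\emph{Step 2: use the size bound on $\alpha$.} By \Cref{thm:equivalent_conditions_UFM}, $\atoms^+(S)=\{1,\alpha,\dots,\alpha^{d-1}\}$, and $\alpha>1$ by \Cref{lem:no_elements_between_zero_and_one}. So $\ell=\alpha$, and \Cref{lem:final_containment} gives $1<\alpha\le\phi$. This excludes the tribonacci root of $x^3-x^2-x-1$, which is about $1.839$. The remaining roots all lie in $(1,\phi)$ and must be handled directly.

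\emph{Step 3: structural facts for the remaining cases.} By \Cref{lem:n_plus_ell_is_a_multiplicative_atom}, $\alpha$ is a multiplicative atom, and $n+\alpha$ is a multiplicative atom whenever $n+\alpha<\alpha^2$. By \Cref{lem:primes_are_multiplicative_atoms}, $2$ is a multiplicative atom, since $\alpha>\sqrt2$ fails in only some cases and otherwise the primes below $\ell$ are trivial. By \Cref{lem:1+alpha^r_not_multiplicative_atom} with $r=1$, one of $1+\alpha$ and $1+\alpha+\alpha^2$ is not an atom. By \Cref{lem:alpha_divides_F} together with \Cref{lem:condition_for_alpha_not_multiplicative_divisor}, it must be one that $\alpha$ divides.

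\emph{Step 4: kill each case with a non-unique factorization.} The idea is to turn the defining relation $\alpha^d=q_\alpha(\alpha)$ into a multiplicative identity in which $\alpha$ appears on one side only. For example, with $q_\alpha=1+x^2$ we have $1+\alpha^2=\alpha^d=\alpha\cdot\alpha^{d-1}$. Alternatively, one can compute the quotient $(1+\alpha+\alpha^2)/\alpha$ or $(1+\alpha)/\alpha$ explicitly in normal form. Plugging it into
\[(1+\alpha+\alpha^2)(1+\alpha^3)=(1+\alpha)(1+\alpha^2+\alpha^4)\]
should exhibit two factorizations that differ in the multiplicity of the prime $\alpha$, or in an atom such as $1+\alpha$. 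Where that is not enough, I would imitate \Cref{ex:golden_ratio} and \Cref{ex:sqrt2}: find a small element $A$ that is provably a multiplicative atom by an additive-length argument via \Cref{lem:length_inequality}, and an identity $A\cdot B=C\cdot D$ in which $A$ cannot divide $C$ or $D$ because of size or additive normal form.

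The main obstacle is Step 4. The six leftover minimal polynomials each seem to need a hand-picked identity. I would first try to find one uniform argument for $q_\alpha=1+x^i$ based on the prime $\alpha$ and the divisibility forced by \Cref{lem:1+alpha^r_not_multiplicative_atom}, and fall back on case-by-case examples only if that fails.
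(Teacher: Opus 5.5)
Your Steps 1--3 are essentially correct. The list of candidate minimal polynomials is right, and \Cref{lem:final_containment} legitimately removes $x^3-x^2-x-1$. But Step 4 is where the theorem is actually proved, and you give only a search plan there. The concrete moves you suggest do not produce a contradiction:
\begin{itemize}
\item For $q_\alpha=1+x^2$, the identity $1+\alpha^2=\alpha\cdot\alpha^{d-1}$ is a perfectly consistent factorization. In $1+\alpha^2$ the coefficient at $s=2$ equals the constant term, so \Cref{lem:condition_for_alpha_not_multiplicative_divisor} does not forbid it.
\item The facts you gather in Step 3 are consistent in every case: $1+\alpha=\alpha^d$ when $q_\alpha=1+x$; $1+\alpha+\alpha^2=\alpha(1+\alpha^{d-1})$ when $q_\alpha=1+x^2$; and $1+\alpha+\alpha^2=\alpha^d$ when $q_\alpha=1+x+x^2$.
\item The recycled identity $(1+\alpha+\alpha^2)(1+\alpha^3)=(1+\alpha)(1+\alpha^2+\alpha^4)$ gives nothing for $m_\alpha=x^3-x-1$. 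There $1+\alpha+\alpha^2=\alpha^5$, $1+\alpha=\alpha^3$ and $1+\alpha^2+\alpha^4=\alpha^2(2+\alpha)$, so both sides equal $\alpha^5(2+\alpha)$.
\end{itemize}
Separately, \Cref{lem:primes_are_multiplicative_atoms} says nothing about $2$ when $\alpha<\sqrt2$, which covers most of your remaining cases. You never use that claim, so it does no harm.

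The missing idea is to use two facts together. First, $\alpha$ is \emph{prime} in $\nn_0[\alpha]^\ast$; you already note it is an atom. Second, \Cref{lem:condition_for_alpha_not_multiplicative_divisor} certifies $\alpha\nmid F$ just by reading normal-form coefficients. So it suffices to exhibit $F,G$ with $\alpha\nmid F$ and $\alpha\nmid G$ but $FG\in\alpha\,\nn_0[\alpha]$. Such identities exist and are largely uniform in $d$:
\begin{itemize}
\item If $q_\alpha=1+x+x^2$, then $(1+\alpha)^2=\alpha(1+\alpha^{d-1})$.
\item If $q_\alpha=1+x^2$, then $(1+\alpha)^2=\alpha(2+\alpha^{d-1})$.
\end{itemize}
In both cases $\alpha\nmid 1+\alpha$, because $2\in\supp(q_\alpha)$ and the $\alpha^2$-coefficient of $1+\alpha$ is $0$. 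This kills both families for every $d\ge3$ at once, so your $r=3$ step and the bound $\alpha\le\phi$ become unnecessary. What remains is $q_\alpha=1+x$ with $d\in\{3,4\}$, since $r=2$ excludes $d\ge5$:
\begin{itemize}
\item For $d=3$, use $(1+\alpha^2)^2=\alpha(\alpha^2+3\alpha)$.
\item For $d=4$, use $(1+\alpha^2)(1+\alpha^3)=\alpha(2\alpha+\alpha^2+\alpha^3)$.
\end{itemize}
Here $\alpha$ divides neither $1+\alpha^2$ nor $1+\alpha^3$, since their $\alpha$-coefficient is $0<1$. This is how the paper finishes. Without such an argument, your proposal stops short of a proof.
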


\begin{proof}
Assume for the sake of contradiction that $S=\nn_0[\alpha]$ is bi-UFS. By~\Cref{lem:deg_3_plus_min}, the minimal polynomial of $\alpha$ is in the form
\[ m_\alpha(x)=x^d-x^i-1 \hspace{.5 cm} \text{or} \hspace{.5 cm}m_\alpha(x)=x^d-x^j-x^i-1,\]
for integers $1\leq i<j\leq d-1$. Also, by~\Cref{lem:n_plus_ell_is_a_multiplicative_atom}, $\alpha$ is an atom, so it must be a prime because $\nn_0[\alpha]^*$ is a UFM. Suppose for the sake of contradiction that $m_\alpha(x)=x^d-x^j-x^i-1$ for some $1\leq i<j\leq d-1$. By~\Cref{lem:S_is_subset_of_r_and_2r}, since $2<d$, we know that $\{i,j\}$ is a subset of $\{1,2\}$ by taking $r = 1$. Therefore $\alpha^d=\alpha^2+\alpha+1$, so $\alpha(1+\alpha^{d-1})=(1+\alpha)^2$. Since $2\in\supp(q_\alpha(x))$, and the coefficient of $\alpha^2$ in $1+\alpha$ is less than the constant term of $1+\alpha$, we know by~\Cref{lem:condition_for_alpha_not_multiplicative_divisor} that $\alpha$ is not a multiplicative divisor of $1+\alpha$. 
This is a contradiction as $\alpha$ is prime. Therefore we must have $m_\alpha(x)=x^d-x^i-1$ for some $1\leq i\leq d-1$. By~\Cref{lem:S_is_subset_of_r_and_2r}, since $2<d$, we know that $i$ must be an element of $\{1,2\}$. Suppose for the sake of contradiction that $i=2$. Then $\alpha^d=\alpha^2+1$, so $\alpha(2+\alpha^{d-1})=(1+\alpha)^2$. By the same reasoning as above, since $2\in\supp(q_\alpha(x))$ and the coefficient of $\alpha^2$ in $1+\alpha$ is less than the constant term of $1+\alpha$, we know by~\Cref{lem:condition_for_alpha_not_multiplicative_divisor} that $\alpha$ is not a multiplicative divisor of $1+\alpha$. In the same way, we arrive at a contradiction since $\alpha$ cannot be prime. Therefore $i=1$. If $d \geq 5$, taking $r = 2$ in~\Cref{lem:S_is_subset_of_r_and_2r} implies $i \in \{2, 4\}$, contradiction. Thus $d \in \{3, 4 \}$.
Suppose towards a contradiction that $d=3$. Then since $\alpha^3=\alpha+1$, we know $(1+\alpha^2)^2=\alpha(\alpha^2+3\alpha)$. Since $1\in\supp(q_\alpha(x))$ and the coefficient of $\alpha$ in $1+\alpha^2$ is less than the constant term of $1+\alpha^2$, we know by~\Cref{lem:condition_for_alpha_not_multiplicative_divisor} that $\alpha$ is not a multiplicative divisor of $1+\alpha^2$. This is a contradiction as $\alpha$ is a prime. 
Therefore $d=4$. Then since $\alpha^4=\alpha+1$, we know $$(1+\alpha^2)(1+\alpha^3)=1+\alpha^2+\alpha^3+(\alpha^2+\alpha)=\alpha(2\alpha+\alpha^2+\alpha^3).$$ Since $1\in\supp(q_\alpha(x))$, the coefficient of $\alpha$ in $1+\alpha^2$ is less than the constant term of $1+\alpha^2$, and the coefficient of $\alpha$ in $1+\alpha^3$ is less than the constant term of $1+\alpha^3$, we know by~\Cref{lem:condition_for_alpha_not_multiplicative_divisor} that $\alpha$ is not a multiplicative divisor of $1+\alpha^2$ nor $1+\alpha^3$. This contradicts $\alpha$ being a prime. All cases lead to a contradiction, so $S=\nn_0[\alpha]$ must not be bi-UFS.
\end{proof}

The degree-two and degree-$\ge 3$ cases together yield the characterization of bi-UFS cyclic algebraic semidomains.

\begin{theorem}\label{thm:final_theorem}
Let $\alpha>0$ be algebraic. The semidomain $\nn_0[\alpha]$ is bi-UFS if and only if $\alpha\in\nn$.
\end{theorem}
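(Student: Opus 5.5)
The proof splits by the algebraic degree of $\alpha$ and assembles results already established.

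For the forward direction, suppose $\nn_0[\alpha]$ is bi-UFS and let $d:=\deg m_\alpha(x)$.
\begin{itemize}
\item If $d=2$, then \Cref{thm:deg2} gives a contradiction.
\item If $d\ge 3$, then \Cref{thm:deg_3_and_up} gives a contradiction.
\item Hence $d=1$, so $\alpha\in\qq_{>0}$ and $\nn_0[\alpha]$ contains $\alpha$ itself.
\end{itemize}
It remains to show that $\alpha$ is an integer. Here I would use \Cref{thm:equivalent_conditions_UFM} with $d=1$. The minimal pair must satisfy $p(x)=x$, so $\ell m_\alpha(x)=x-q(x)$ with $q\in\nn_0[x]$ a constant. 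This means $\ell=1$ and $m_\alpha(x)=x-q_0$ with $q_0\in\nn_0$, and therefore $\alpha=q_0\in\nn$ because $\alpha>0$.

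As a sanity check that avoids relying on how the cited theorem handles degree one, there is an alternative route. If $\alpha=a/b$ in lowest terms with $b\ge 2$, then either $\alpha<1$, or $\alpha^n$ has a growing denominator. In the first case \Cref{lem:no_elements_between_zero_and_one} gives a contradiction. In the second case some element $\alpha^n-\lfloor\alpha^n\rfloor\in(0,1)$ arises, but showing this lies in $S$ requires care. So I prefer the route through \Cref{thm:equivalent_conditions_UFM}.

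For the converse, if $\alpha\in\nn$ then $\nn_0[\alpha]=\nn_0$. Its additive monoid is free on $\{1\}$, and its multiplicative monoid is free on the rational primes by the fundamental theorem of arithmetic. Hence $\nn_0[\alpha]$ is bi-UFS.

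The only mildly delicate step is the degree-one case, namely confirming that additive factoriality forces $\alpha\in\nn$ rather than merely $\alpha\in\qq$. Everything else is a direct citation of \Cref{thm:deg2} and \Cref{thm:deg_3_and_up}.
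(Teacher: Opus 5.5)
Your proof is correct and is essentially the paper's argument: rule out degrees $2$ and $\ge 3$ via \Cref{thm:deg2} and \Cref{thm:deg_3_and_up}, then use \Cref{thm:equivalent_conditions_UFM} in degree one to force $m_\alpha(x)=x-n$ with $n\in\nn$. Your remark that $p(x)=x$ forces the clearing denominator to be $1$ makes explicit what the paper leaves implicit, and the converse is the standard fact that $\nn_0$ is bi-UFS. You were right not to rely on the aside about $\alpha^n-\lfloor\alpha^n\rfloor$: for $\alpha>1$ every nonzero element of $\nn_0[\alpha]$ is at least $1$, so that element never lies in the semidomain.
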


\begin{proof}
    Suppose $\nn_0[\alpha]$ is bi-UFS. Then from~\Cref{thm:deg2} and~\Cref{thm:deg_3_and_up}, we know $\alpha$ has to be of degree 1. Since the additive monoid $\nn_0[\alpha]$ is a UFM, we know from~\Cref{thm:equivalent_conditions_UFM} that $m_\alpha(x)=x-n$ for some positive integer $n$. In particular, $\alpha\in\nn$,
    and $\nn_0[\alpha]=\nn_0$, the prototypical bi-UFS.
\end{proof}
\bigskip

\section{Finitely Generated Algebraic Positive Semidomains}\label{sec:multigenerator}

Throughout this section, we consider the semidomain $S = \nn_0[\alpha_1, \ldots, \alpha_n]$, where $\alpha_1, \ldots, \alpha_n$ are positive algebraic numbers. Our goal is to show that $S$ is bi-UFS if and only if $S = \nn_0$, which reduces to showing that if $S$ is a bi-UFS then $S$ is the semidomain of nonnegative integers. The strategy is to embed $S$ inside the rational cone generated by its additive atoms, and then to use a Perron--Frobenius argument to force the cone to be one-dimensional. 

We begin by establishing that if $S$ is a bi-UFS then it contains finitely many additive atoms, which makes the rational cone generated by its additive atoms finite-dimensional.

\begin{lemma}\label{lemma:finitely_atoms}
If the additive monoid of the semidomain $S\coloneqq\nn_0[\alpha_1,\ldots,\alpha_n]$ is a UFM then the set of additive atoms $\atoms^+(S)$ is finite. In particular, the inequality $|\atoms^+(S)|\le [\qq(\alpha_1,\ldots,\alpha_n):\qq]$ holds.
\end{lemma}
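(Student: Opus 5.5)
We will show that the additive atoms of $S$ are linearly independent over $\qq$ inside the field $K:=\qq(\alpha_1,\ldots,\alpha_n)$. This immediately gives both the finiteness and the bound $|\atoms^+(S)|\le [K:\qq]$. Since each $\alpha_i$ is algebraic, $K$ is a finite extension of $\qq$, and clearly $S\subseteq K$. The whole argument therefore reduces to a linear independence statement, which we will deduce from unique additive factorization.

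First, because $(S,+)$ is a UFM it is atomic. Every element of $S$ is therefore a finite $\nn_0$-combination of additive atoms. (The atoms are in fact contained in the finite-looking set of monomials $\alpha_1^{e_1}\cdots\alpha_n^{e_n}$, since those generate $(S,+)$, but we will not need this.)

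Now suppose, for contradiction, that some finitely many distinct atoms $a_1,\ldots,a_k\in\atoms^+(S)$ satisfy a nontrivial relation $\sum_i r_i a_i=0$ with $r_i\in\qq$. We do the following:
\begin{enumerate}
  \item Clear denominators so that every $r_i\in\zz$.
  \item Move the negative terms to the other side, obtaining
  \[
    \sum_{i\in I} m_i a_i=\sum_{j\in J} m_j a_j
  \]
  with $I\cap J=\emptyset$, all $m_i,m_j\in\nn$, and not all coefficients zero.
  \item Observe that both sides are then nonempty. A nonempty side is a sum of positive reals and hence positive, so it cannot equal an empty sum, which is $0$.
  \item Conclude that we have two distinct factorizations of the same element of $(S,+)$: their supports $I$ and $J$ are disjoint and nonempty. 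This contradicts the UFM hypothesis.
\end{enumerate}
Hence any finite set of atoms is $\qq$-linearly independent in $K$. A $\qq$-linearly independent subset of the $\qq$-vector space $K$ has at most $\dim_\qq K=[K:\qq]$ elements. Therefore $\atoms^+(S)$ is finite and satisfies the stated bound.

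The only step requiring care is verifying that the two sides really are distinct factorizations, i.e., elements of the free monoid on $\atoms^+(S)$. This holds because the atoms $a_i$ were chosen distinct and $I,J$ are disjoint and nonempty. Positivity of the reals, as used in step 3, guarantees both sides are nonempty. I do not expect any genuine obstacle; this is essentially the standard fact that a positive monoid is a UFM exactly when its atoms are linearly independent, combined with a dimension count in $K$.
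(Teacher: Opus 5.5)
Your proof is correct and follows the paper's argument essentially verbatim: from a nontrivial $\qq$-relation among atoms, you clear denominators and separate signs to obtain two factorizations of the same element with disjoint supports, which contradicts the UFM hypothesis, and $\qq$-linear independence inside $K$ then gives $|\atoms^+(S)|\le [K:\qq]$. Your extra check that both sides are nonempty is a detail the paper leaves implicit, and it is not strictly needed, since a nonempty factorization of $0$ would already contradict uniqueness.
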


\begin{proof}
Assume that $(S,+)$ is a UFM. Let $K = \Q(\alpha_1, \ldots, \alpha_n)$ which contains $S$. Since $\alpha_1, \ldots, \alpha_n$ are algebraic then the extension $K/\Q$ is finite. Suppose towards a contradiction that the additive atoms of $S$ are not linearly independent over $\Q$. Then there exist additive atoms $a_1, \ldots, a_m \in \atoms^+(S)$ and rational numbers $q_1, \ldots, q_m \in \qq$ (not all equal to $0$) such that $q_1 a_1 + \cdots + q_m a_m = 0$. Clearing denominators yields integers $n_1, \ldots, n_m$ (not all zero) such that the equality $n_1a_1 + \ldots + n_ma_m = 0$ holds. Separating positive and negative coefficients yields $\sum_{q_i' > 0} q_i' a_i = \sum_{q_i ' < 0}(-q_i')a_i$. Both sides are additive factorizations of the same element, and since the positive and negative sets are disjoint, these are different factorizations. This contradicts that $(S,+)$ is an additive UFM. Therefore $\atoms^+(S)$ is linearly independent over $\Q$. Since $\atoms^+(S) \subset K$ and $K$ is finite-dimensional over $\Q$, the inequality $|\atoms^+(S)| \leq [K : \Q]$ holds.
\end{proof}

The following lemma records a useful consequence of nonnegativity: if a coordinate subspace is invariant under a sum of linear maps whose matrices have nonnegative entries then it is invariant under each summand. We will use this observation later to prove that certain multiplication matrices are irreducible.

\begin{lemma}\label{lem:each_commp_stabilizes}
Let \(V\) be a vector space over an ordered field with basis
\(e_1,\dots,e_r\). For each subset \(I \subseteq \llbracket 1,r\rrbracket\), set
\[
W_I \coloneqq \operatorname{span}\{e_i \colon i \in I\}.
\]
Let \(T_1,\dots,T_m \colon V \to V\) be linear maps whose matrices with
respect to the basis \(e_1,\dots,e_r\) have nonnegative entries, and set
\[
T \coloneqq \sum_{j=1}^m T_j.
\]
If \(T(W_I) \subseteq W_I\) then \(T_j(W_I) \subseteq W_I\) for every
\(j \in \llbracket 1,m\rrbracket\).
\end{lemma}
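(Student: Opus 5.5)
The plan is to translate invariance of $W_I$ into a statement about matrix entries and then use that a sum of nonnegative reals vanishes only when every summand vanishes. Let $A=(a_{kl})$ denote the matrix of $T$ and $A^{(j)}=(a^{(j)}_{kl})$ the matrix of $T_j$ with respect to $e_1,\dots,e_r$, so that
\[
a_{kl}=\sum_{j=1}^m a^{(j)}_{kl} \quad\text{for all } k,l\in\llbracket 1,r\rrbracket,
\]
and, by hypothesis, $a^{(j)}_{kl}\ge 0$ in the ordered field.

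The first step is the coordinate criterion: for any linear map $U$ with matrix $(u_{kl})$, we have $U(W_I)\subseteq W_I$ if and only if $u_{kl}=0$ for all $l\in I$ and $k\notin I$. This holds because $U(W_I)$ is spanned by the images $U(e_l)=\sum_k u_{kl}e_k$ for $l\in I$. Such an image lies in $W_I$ exactly when its coordinates outside $I$ vanish, since $e_1,\dots,e_r$ is a basis.

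Next, apply the criterion to $T$. The hypothesis $T(W_I)\subseteq W_I$ gives
\[
\sum_{j=1}^m a^{(j)}_{kl}=0 \quad\text{whenever } l\in I,\ k\notin I.
\]
In an ordered field, a finite sum of nonnegative elements equals $0$ only if each summand is $0$: if some term were strictly positive, the sum would be strictly positive. Hence $a^{(j)}_{kl}=0$ for every $j$ and every pair $l\in I$, $k\notin I$. Applying the criterion in the reverse direction then yields $T_j(W_I)\subseteq W_I$ for each $j\in\llbracket 1,m\rrbracket$.

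No step presents a real obstacle. The only point needing care is that the argument uses only the ordering of the field, namely that nonnegative elements summing to zero must all be zero. It uses no property specific to $\rr$ or $\qq$, so it applies verbatim when the lemma is later invoked over $\qq$ for the multiplication matrices.
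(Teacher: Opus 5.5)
Your proposal is correct and follows essentially the same route as the paper: both reduce invariance of $W_I$ to the vanishing of the coefficients of $e_k$, $k\notin I$, in $T(e_l)$ for $l\in I$, and then use that a finite sum of nonnegative elements of an ordered field is zero only when every summand is zero. The paper works with the expansions of $T_j(e_i)$ and concludes by spanning, rather than stating your explicit matrix-entry criterion, but the argument is the same.
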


\begin{proof}
Suppose that \(T(W_I) \subseteq W_I\), and fix \(i \in I\). For each
\(j \in \llbracket 1,m\rrbracket\), write
\[
T_j(e_i)=\sum_{\ell=1}^r c_{j,i,\ell}e_\ell,
\]
where \(c_{j,i,\ell} \geq 0\). It follows that
\[
T(e_i)
 = \sum_{j=1}^m T_j(e_i)
 = \sum_{\ell=1}^r
   \left(\sum_{j=1}^m c_{j,i,\ell}\right)e_\ell.
\]
Since \(e_i \in W_I\) and \(T(W_I) \subseteq W_I\), the vector \(T(e_i)\)
belongs to \(W_I\). Thus, for every \(\ell \notin I\), the coefficient of
\(e_\ell\) in the preceding expansion is zero. In other words,
\[
\sum_{j=1}^m c_{j,i,\ell}=0.
\]
The coefficients \(c_{j,i,\ell}\) are nonnegative. Hence
\(c_{j,i,\ell}=0\) for every \(j \in \llbracket 1,m\rrbracket\) and every
\(\ell \notin I\). Therefore
\[
T_j(e_i)=\sum_{\ell\in I}c_{j,i,\ell}e_\ell \in W_I
\]
for every \(j \in \llbracket 1,m\rrbracket\). Since the vectors
\(\{e_i \colon i \in I\}\) span \(W_I\), each map \(T_j\) leaves \(W_I\)
invariant.
\end{proof}

We now set up the coordinate framework that we will use for the remainder of this section. Write
\[
\atoms^+(S)=\{a_1,\dots,a_r\}.
\]
Since the elements of \(\atoms^+(S)\) are \(\qq\)-linearly independent and \(1\in \atoms^+(S)\), we may relabel them so that \(a_1=1\). Define
\[
V\coloneqq \operatorname{span}_{\qq}\{a_1,\dots,a_r\}\subseteq \rr.
\]
The vector space \(V\) is closed under multiplication. Indeed, for each \(i,j\in\llbracket 1,r\rrbracket\), the product \(a_i a_j\) belongs to \(S\). Since \((S,+)\) is a UFM with additive atoms \(a_1,\dots,a_r\), there are unique coefficients \(c_{ij,k}\in\nn_0\) such that
\[
a_i a_j=\sum_{k=1}^r c_{ij,k}a_k.
\]
Bilinearity now shows that the product of two elements of \(V\) belongs to \(V\). Thus \(V\) is a finite-dimensional \(\qq\)-algebra containing \(1\). Moreover, \(V\subseteq\rr\), so \(V\) has no zero divisors. For each nonzero \(x\in V\), multiplication by \(x\) is an injective \(\qq\)-linear map from \(V\) to itself. Finite dimensionality implies that this map is surjective. Hence \(xy=1\) for some \(y\in V\), and \(V\) is a field.

Every element \(x\in V\) has a unique expression
\(
x=q_1a_1+\cdots+q_ra_r
\) with \(q_1,\dots,q_r\in\qq\). We define the coordinate column of \(x\) by
\[
[x]\coloneqq
\begin{bmatrix}
q_1\\
\vdots\\
q_r
\end{bmatrix}.
\]
The coordinate map is \(\qq\)-linear. In particular, \([x+y]=[x]+[y]\) and \([qx]=q[x]\) for all \(x,y\in V\) and \(q\in\qq\). The additive unique factorization property gives the coordinate description
\[
s\in S
\quad\Longleftrightarrow\quad
[s]\in\nn_0^r.
\]
Indeed, each element of \(S\) has a unique expression
\[
s=n_1a_1+\cdots+n_ra_r
\]
with \(n_1,\dots,n_r\in\nn_0\), and every such sum belongs to \(S\). By~\Cref{lem:multiplicative_monoid_reduced}, the element \(a_1=1\) is the only multiplicative unit of \(S\). Consequently, every multiplicative atom \(p\in\atoms(S)\) satisfies \(p>1\). We now consider the nonnegative coordinate cone
\[
C\coloneqq
\{x\in V\colon [x]\in\qq_{\geq 0}^r\}
=
\left\{
\sum_{i=1}^r q_i a_i
\colon
q_1,\dots,q_r\in\qq_{\geq 0}
\right\}.
\]
The cone \(C\) contains \(S\) and is closed under addition and multiplication by elements of \(\qq_{\geq 0}\). It is also closed under multiplication. To see this, take
\[
x=\sum_{i=1}^r q_i a_i
\qquad\text{and}\qquad
y=\sum_{j=1}^r q_j'a_j
\]
in \(C\). Using the structure constants introduced above gives
\[
xy
=
\sum_{i=1}^r\sum_{j=1}^r q_iq_j'a_i a_j
=
\sum_{k=1}^r
\left(
\sum_{i=1}^r\sum_{j=1}^r q_iq_j'c_{ij,k}
\right)a_k.
\]
Each coefficient in the final expression is a nonnegative rational number. Therefore \(xy\in C\). For each nonzero \(v\in C\), the \emph{ray generated by \(v\)} is
\[
\qq_{\geq 0}v
\coloneqq
\{qv\colon q\in\qq_{\geq 0}\}.
\]
We call \(\qq_{\geq 0}v\) an \emph{extreme ray} of \(C\) if every decomposition \(v=x+y\) with \(x,y\in C\) satisfies
\(
x,y\in\qq_{\geq 0}v.
\)
This condition depends only on the ray and not on its chosen generator. Indeed, if \(v'=cv\) for some \(c\in\qq_{>0}\), then a decomposition \(v'=x+y\) yields
\[
v=(1/c)x+(1/c)y.
\]
Thus extremality for the ray generated by \(v\) implies extremality for the ray generated by \(v'\).

Fix \(s\in S\), and let
\[
L_s\colon V\longrightarrow V,
\qquad
x\longmapsto sx,
\]
be the \(\qq\)-linear map given by multiplication by \(s\). Let \(M_s\) denote the matrix of \(L_s\) with respect to the ordered basis \(a_1,\dots,a_r\). Equivalently,
\[
[sx]=M_s[x]
\]
for every \(x\in V\). The \(j\)-th column of \(M_s\) is \([sa_j]\). Since \(sa_j\in S\), this column belongs to \(\nn_0^r\). Hence every entry of \(M_s\) is a nonnegative integer. Finally, set
\[
\sigma\coloneqq a_1+\cdots+a_r\in C.
\]
The coordinate column of \(\sigma\) is the all-ones column, so each of its coordinates is positive. This element will serve as a distinguished point of \(C\) in the arguments below.

The next lemma records the basic algebraic properties of the matrices \(M_s\), together with the irreducibility statement needed for the Perron--Frobenius argument.

\begin{lemma}\label{lem:M_s_properties}
With the notation above, the matrices $M_s$ satisfy $$M_1 = I, \text{\hspace{.3cm}} M_{s+t} = M_s + M_t,\hspace{.25 cm} and \hspace{.3cm} M_{st} = M_s M_t,$$ so that $M_{e^N} = (M_e)^N$ for every $e \in S$ and $N \in \nn$. Moreover, if $e = c_1 a_1 + c_2 a_2 + \cdots + c_r a_r \in S$ with each $c_i \in \nn$ then $M_e$ is irreducible.
\end{lemma}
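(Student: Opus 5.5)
The algebraic identities follow directly from the definition $[sx]=M_s[x]$ and the fact that the coordinate map is $\qq$-linear. First, $[1\cdot x]=[x]$ for every $x\in V$, so $M_1=I$. Second, $[(s+t)x]=[sx]+[tx]=(M_s+M_t)[x]$. Third, $[(st)x]=[s(tx)]=M_s[tx]=M_sM_t[x]$. Since these equalities hold for all $x\in V$, and the columns $[a_j]$ are the standard basis vectors, the matrices coincide. The formula $M_{e^N}=(M_e)^N$ then follows from the product rule by induction on $N$.

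For irreducibility, I would write $M_e=\sum_{i=1}^r c_iM_{a_i}$ using additivity together with $M_{ns}=nM_s$ for $n\in\nn$, which is a consequence of additivity. Each $M_{a_i}$ has nonnegative integer entries, since its columns $[a_ia_j]$ lie in $\nn_0^r$. Now suppose a nonempty proper $I\subsetneq\llbracket 1,r\rrbracket$ has $W_I$ invariant under $M_e$. Applying \Cref{lem:each_commp_stabilizes} to the summands $T_i=c_iM_{a_i}$, which are nonnegative because $c_i\ge 1>0$, shows that $W_I$ is invariant under every $M_{a_i}$ (dividing by $c_i>0$).

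The key step is to derive a contradiction from this. Pick $i\in I$ and $k\notin I$. Since $V$ is a field, there is a nonzero $x\in V$ with $x a_i=a_k$. Write $x=\sum_j q_ja_j$. Then
\[
e_k=[a_k]=[xa_i]=\sum_j q_j M_{a_j}[a_i]=\sum_j q_j M_{a_j}e_i.
\]
Each $M_{a_j}e_i$ lies in $W_I$ by invariance, so $e_k\in W_I$, contradicting $k\notin I$. Thus no such $I$ exists, and $M_e$ is irreducible.

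The main subtlety is this last step. Invariance of $W_I$ under all $M_{a_j}$ makes $W_I$ invariant under $M_x$ for every $x\in V$, because the span of the $a_j$ is all of $V$ and the assignment $x\mapsto M_x$ extends $\qq$-linearly. The only remaining point is that $V$ is a field, which was established earlier in the setup, so this route should go through without further obstacles.
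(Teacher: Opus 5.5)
Your proposal is correct and follows essentially the same route as the paper: the identities come from linearity of the coordinate map, and irreducibility comes from applying \Cref{lem:each_commp_stabilizes} to $M_e=\sum_i c_iM_{a_i}$ to get invariance of $W_I$ under every $M_{a_i}$, and then using that $V$ is a field. Your explicit choice $x=a_k/a_i$ is just a concrete unpacking of the paper's observation that $W_I$ would then be a nonzero proper ideal of the field $V$.
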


\begin{proof}
The identity $M_1 = I$ follows from $1 \cdot a_j = a_j$. Additivity follows from $(s+t)a_j = sa_j + ta_j$ and multiplicativity from $(st)a_j = s(ta_j)$ together with linearity of $[\,\cdot\,]$. For irreducibility, write $M_e = c_1 M_{a_1} + c_2 M_{a_2} + \cdots + c_r M_{a_r}$ and suppose for the sake of contradiction that there is a nonempty proper subset $I \subsetneq \{1, \ldots, r\}$ with $M_e(W_I) \subseteq W_I$, where $W_I := \operatorname{span}_\qq \{a_i : i \in I\}$. By~\Cref{lem:each_commp_stabilizes}, each $M_{a_i}$ also satisfies $M_{a_i}(W_I) \subseteq W_I$, which means that $a_i W_I \subseteq W_I$ for every $i$. Since the atoms $a_1, a_2, \ldots, a_r$ span $V$ over $\qq$, it follows that $V W_I \subseteq W_I$. Then $W_I$ is a nonzero proper ideal of the field $V$, a contradiction.
\end{proof}

Now we use Perron--Frobenius (\Cref{thm:PF}) to show that an arbitrary positive element of $V$ can be ``pushed into'' $S$ by multiplying by a high power of an interior element of $C$.

\begin{lemma}\label{lem:positivity}
    Let $e = c_1 a_1 + c_2 a_2 + \cdots + c_ra_r \in S$ in which every coefficient $c_i$ is a positive integer. If $y \in V$ and $y > 0$ then there exist positive integers $D$ and $N$ such that $De^N y \in S$.  
\end{lemma}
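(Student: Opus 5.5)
We will use the Perron--Frobenius asymptotics of \Cref{thm:PF} for the matrix \(B := M_e\). The goal is to show that the coordinate vector \([e^N y] = B^N [y]\) becomes entrywise positive for large \(N\). Once that holds, clearing denominators by a positive integer \(D\) puts \([De^N y]\) in \(\nn_0^r\), so \(De^N y \in S\) by the coordinate description of \(S\).

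\textbf{Step 1: \(B\) is primitive.} By \Cref{lem:M_s_properties}, \(B\) is irreducible and has nonnegative integer entries. Since \(a_1 = 1\), we have \(M_{a_1} = I\), so \(B = c_1 I + \sum_{i\ge 2} c_i M_{a_i}\) with \(c_1 \ge 1\). Every diagonal entry of \(B\) is therefore positive. By \cite[Lemma~8.5.4]{HJ13}, \(B\) is primitive.

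\textbf{Step 2: identify the Perron data through the real embedding.} Consider the \(\qq\)-linear functional \(\ell\colon V \to \rr\) given by \(x \mapsto x\), the inclusion \(V \subseteq \rr\). In coordinates, \(\ell(x) = \lambda^\top [x]\), where \(\lambda = (a_1, \dots, a_r)^\top \in \rr^r_{>0}\); every additive atom is at least \(1\) by \Cref{lem:no_elements_between_zero_and_one}. Since \(\ell(ex) = e\,\ell(x)\), we get \(\lambda^\top B = e\lambda^\top\). So \(\lambda\) is a strictly positive left eigenvector of \(B\) with eigenvalue \(e\).

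A nonnegative irreducible matrix has only one eigenvalue admitting a positive left eigenvector, namely its Perron root. To see this, pair \(\lambda\) with the positive right Perron vector \(w\): this gives \(e\,\lambda^\top w = \lambda^\top B w = \rho(B)\,\lambda^\top w\), and \(\lambda^\top w > 0\). Hence \(\rho(B) = e\), and \(\lambda\) is a left Perron eigenvector.

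\textbf{Step 3: apply the limit.} By \Cref{thm:PF}(3),
\[
e^{-N} B^N [y] \;\to\; (\lambda^\top w)^{-1}\, w\, \lambda^\top [y] \;=\; (\lambda^\top w)^{-1}\, y\, w .
\]
Here we used \(\lambda^\top [y] = y\). Since \(y > 0\) and \(w\) is strictly positive, the limit vector is strictly positive. So for some large \(N\), every coordinate of \(B^N[y] = [e^N y]\) is positive. These coordinates are rationals: \([y] \in \qq^r\) and \(B\) has integer entries. Choose \(D\) to be a common denominator of them. Then \([De^N y] \in \nn^r \subseteq \nn_0^r\), and so \(De^N y \in S\).

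The main obstacle is Step 2: pinning down the Perron root and recognising that the real embedding supplies the positive left eigenvector. That recognition is what turns the scalar positivity \(y > 0\) into positivity of the limiting coordinate vector. The remaining steps are direct applications of earlier results.
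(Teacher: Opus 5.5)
Your proposal is correct and follows the paper's proof essentially step for step. Irreducibility together with a positive diagonal gives primitivity, the row of atom values is a strictly positive left eigenvector of $M_e$ with eigenvalue $e$, and the Perron--Frobenius limit applied to $[y]$ gives eventual positivity of $[e^N y]$, after which a common denominator finishes the argument. Your pairing computation $e\,\lambda^\top w = \lambda^\top B w = \rho(B)\,\lambda^\top w$ makes the identification $\rho(B)=e$ explicit; the paper only asserts it.
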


\begin{proof}
    Let $M := M_e$. Since $e = \sum_i c_i a_i$ and the matrix of a sum is the sum of the matrices, $M = c_1 M_{a_1} + c_2 M_{a_2} + \cdots + c_r M_{a_r}$. Every $M_{a_i}$ has nonnegative integer entries and every $c_i$ is a positive integer so $M$ has nonnegative integer entries as well. By~\Cref{lem:M_s_properties}, $M$ is irreducible. 
    
    We now prove that every diagonal entry of $M$ is positive. The $(j,j)$ entry of $M$ is the coefficient of $a_j$ in $ea_j$. We know that $ea_j = \sum_{i=1}^r c_i a_i a_j = c_1 a_j + \sum_{i=2}^r c_i a_i a_j$, where $a_1 = 1$ is the first term. Each remaining term $c_i a_i a_j$ is in $S$ and thus contributes nonnegative integers to all coordinates. Therefore the coefficient of $a_j$ in $ea_j$ is at least $c_1$, which is positive. Therefore every diagonal entry of $M$ is positive. Since $M$ is irreducible and has positive trace, it is primitive by~\cite{HJ13}. A matrix is called \emph{primitive} when some power of it has all entries strictly positive. 

    Let $\lambda = (a_1, a_2, \ldots, a_r)$ be the row of the actual atom values (positive real numbers). Note that for each $x \in V$, $\lambda[x] = x$. Replacing $x$ with $ex$ yields $\lambda[ex] = ex$. However, note that $[ex] = M[x]$, so $\lambda[ex] = \lambda M[x]$. Thus $\lambda M[x] = ex = e(\lambda[x])$. Taking $x = a_j$ yields $[a_j] = e_j$, the $j$-th standard basis column. The identity $\lambda M[x] = e \lambda[x]$ then gives $(\lambda M)_j = \lambda M e_j = e \lambda e_j = (e \lambda)_j$ for each $j$. Therefore $\lambda M = e \lambda$ for every entry. 

    For a primitive matrix $M$ with nonnegative entries, the Perron--Frobenius theorem states that there exists a positive real number $\rho$ (greater in magnitude than all other eigenvalues) along with a strictly positive right eigenvector $w$, so $Mw = \rho w$. Our row $\lambda$ has all entries positive and satisfies $\lambda M = e \lambda$, so $e$ must be this distinguished eigenvalue, $\rho$. Furthermore, the theorem states that $e^{-N} M^N \to w \lambda/\lambda w$, a matrix with all positive entries. Let $v = [y]$ be the coordinate column of our given $y$. Since $y > 0$ and $\lambda v = y$, the number $\lambda v$ is positive. Multiplying the limit above by the fixed column $v$ yields $e^{-N} M^N v \to \frac{w(\lambda v)}{\lambda w} = \frac y{\lambda w} w$. Every entry of the right-hand side is positive, since $y > 0$, $\lambda w > 0$, and $w$ has positive entries. 

    A sequence of vectors converging to a vector with all positive entries must eventually have entries that are all positive. So, there exists a positive integer $N$ such that every entry of $e^{-N} M^N v$ is positive and hence (multiplying by the positive number $e^N$) every entry of $M^N v$ is positive. But $M^N v = (M_e)^N [y] = [e^N y]$, which is the coordinate column of $e^N y$. Note that its entries are rational, as they are obtained from the integer matrix $M$ and the rational column $v$. Therefore $e^N y$ has rational coordinates that are all positive. Finally, we choose an integer $D$ that is a common denominator of these coordinates, forcing every coordinate of $D e^N y$ to be a positive integer. Therefore $D e^N y \in S$.
\end{proof}

We now use our previous lemma to show that the reciprocal of each multiplicative atom lies in $C$, and consequently that $C$ is closed under taking multiplicative inverses. 

\begin{lemma}\label{lem:reciprocal}
    If $p$ is a multiplicative atom of $S$ then $\frac{1}{p} \in C$.
\end{lemma}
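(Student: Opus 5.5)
We will use \Cref{lem:positivity} with the distinguished element $e=\sigma=a_1+\cdots+a_r\in S$, all of whose coordinates equal $1$. Since $p\in S\subseteq V$ and $V$ is a field, we have $1/p\in V$ and $1/p>0$. \Cref{lem:positivity} then supplies positive integers $D$ and $N$ with
\[
  z\coloneqq \frac{D\sigma^N}{p}\in S .
\]
Hence $p\, z = D\sigma^N$ in $S^\ast$, so $p \mid_{S^\ast} D\sigma^N$.

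The plan is to show that $p$ divides $D\sigma^N$ only through a part that lives in $C$, and then divide. Because $p$ is prime in $S^\ast$ (\Cref{lem:one_atom}), write $D$ as a product of rational primes and $\sigma^N$ as $N$ copies of $\sigma$. Primality forces one of two cases: either $p\mid_{S^\ast}\sigma$, or $p\mid_{S^\ast}D$ and hence $p$ divides some rational prime $\ell$.

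In the first case, write $\sigma=pt$ with $t\in S$. Then
\[
  \frac1p=\frac{t}{\sigma}.
\]
So it suffices to check that $1/\sigma\in C$, and this reduces the problem to inverting the single element $\sigma$.

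In the second case, write $\ell=pt$ with $t\in S$. Then
\[
  \frac1p=\frac{t}{\ell}\in C,
\]
since $C$ is closed under multiplication by $\qq_{\ge0}$. This case is therefore immediate.

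The main obstacle is showing $1/\sigma\in C$, or more generally avoiding this reduction. What I would try first is the following induction. Every element $s\in S^\ast$ is a product of multiplicative atoms, and $1/s$ is the product of their reciprocals. So it suffices to handle the atoms dividing $\sigma$.

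To handle those, I would replace $\sigma$ by a different interior element $e$ chosen so that $p\nmid e$. Fix a second interior element $e'=\sigma+1$. If $p$ divides both $\sigma$ and $\sigma+1$, then $p$ divides their difference $1$ in $S$ additively, and multiplicatively this gives $\sigma/p+1/p=(\sigma+1)/p$ with both fractions in $S$. To conclude I would need $p\mid 1$, which does not follow directly.

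A cleaner choice is $e=k\sigma$ for a rational prime $k$ with $p\nmid k$ and $p\nmid\sigma$. If instead $p\mid\sigma$, then $p$ does not divide $\sigma+a_1$. Otherwise $\sigma+1=pu$ and $\sigma=pt$ would give $p(u-t)=1$ with $u-t\in V$; comparing additive lengths, and using $|pu|\ge|p||u|$ from \Cref{lem:length_inequality}, should rule this out because it forces $u-t$ to be a small positive element of $C$ dividing $1$. Once an interior $e\in S$ with $p\nmid_{S^\ast}e$ is found, we apply \Cref{lem:positivity} with that $e$. Primality then gives $p\mid D$, and the second case finishes the proof.
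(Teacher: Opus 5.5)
Your overall structure matches the paper's. You find an element $e\in S$ with all coordinates positive and $p\nmid_{S^\ast} e$, apply \Cref{lem:positivity} with $y=1/p$, use primality of $p$ to get $p\mid_{S^\ast} D$, and conclude $1/p=(1/D)(D/p)\in C$. Your second case is fine.

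The gap is in producing such an $e$ when $p\mid_{S^\ast}\sigma$. From $\sigma=pt$ and $\sigma+1=pu$ you correctly get $u-t=1/p$, so $[1/p]=[u]-[t]\in\zz^r$. But nothing you have forces $[u]\ge[t]$ coordinatewise. The inequality $|pu|\ge|p||u|$ only constrains total additive lengths, not individual coordinates. That $u-t$ is a positive real number says nothing about membership in $C$: whether the positive element $1/p$ lies in $C$ is exactly what the lemma asserts. So calling $u-t$ ``a small positive element of $C$'' assumes the conclusion. At this stage nothing rules out an integer vector $[1/p]$ with entries of mixed sign; that would only mean $p$ is a unit of the ring $\zz a_1+\cdots+\zz a_r$. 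Your other attempted reductions do not help either. Reducing to $1/\sigma\in C$ via the atoms dividing $\sigma$ is circular, because $p$ is itself one of those atoms. The choice $e=k\sigma$ presupposes $p\nmid_{S^\ast}\sigma$.

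The missing idea is to use the whole family $\sigma+n$, $n\in\nn_0$, rather than two members. Suppose $p$ divided every $\sigma+n$. Then $m_n:=[(\sigma+n)/p]\in\nn_0^r$, and $m_{n+1}-m_n=[1/p]=:z$ is a fixed integer vector, so $m_n=m_0+nz$. Nonnegativity of $m_0+nz$ for all $n$ forces $z\in\nn_0^r$, so $1/p\in S$. This contradicts \Cref{lem:no_elements_between_zero_and_one}, since $0<1/p<1$. Hence some $e=\sigma+n$, whose coordinates are all positive, is not divisible by $p$, and the rest of your argument then goes through unchanged. This is exactly how the paper proceeds.
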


\begin{proof}
    We know that $p > 1$. Since $S^\ast$ is a UFM, the element $p$ is prime. For each $n \in \nn_0$, the element $\sigma + n = (n+1)a_1 + a_2 + \cdots + a_r$ lies in $S$ and has coordinates $(n+1, 1, 1, \cdots, 1)$, each of which are positive integers. Suppose for the sake of contradiction that $p$ divides $\sigma + n$ for all $n \ge 0$. Then for each $n$, there is an element $q_n \in S$ such that $\sigma + n = p q_n$, or $q_n = (\sigma + n)/p$. Subtracting consecutive terms yields $q_{n+1} - q_n = ((\sigma + n + 1) - (\sigma + n))/p = 1/p$. Let $m_n = [q_n]$ which is in $\nn_0^r$ since $q_n \in S$. Additionally, write $z = [1/p]$, which has integer entries since it is the difference $m_{n+1} - m_n$ of two integer columns. Then $m_{n+1} - m_n = z$ for every $n$, so adding these up yields $m_n = m_0 + nz$ for all $n \ge 0$. If some entry of $z$ were negative then for a large enough $n$, the corresponding entry of $m_0 + nz$ would be negative, contradicting $m_n \in \nn_0^r$. Thus every entry of $z$ is a nonnegative integer, which means that $1/p = z_1 a_1 + z_2 a_2 + \cdots + z_r a_r$ has nonnegative integer coordinates, and so $1/p \in S$. However, $p > 1$ yields $0 < 1/p < 1$, contradicting that there are no elements of $S$ between 0 and 1. Therefore there exists some $n$ such that $p \nmid \sigma + n$.

    Fix such an $n$ and let $e = \sigma + n$. Each of its coordinates are positive so~\Cref{lem:positivity} applies with $y = 1/p > 0$: there exist positive integers $D, N$ with $D e^N \cdot (1/p) \in S$. Therefore $De^N = p \cdot (De^N/p)$ so $p \mid_{S^\ast} De^N$. Since $p$ is prime and does not divide $e$ (and thus does not divide $e^N$), $p$ must divide $D$. Thus, $D/p \in S$. Note that $1/p = 1/D \cdot D/p$ where $D/p \in S \subseteq C$ and $1/D$ is a nonnegative rational. Since $C$ is closed under multiplying by nonnegative rationals, we know that $1/p \in C$.
\end{proof}

Next we use closure under inversion to show that $\sigma$ acts on $C$ as a bijection that carries extreme rays to extreme rays, and we derive a contradiction from the existence of two extreme rays.

\begin{lemma}\label{lem:reciprocal_better}
    If $x \in C$ and $x \neq 0$ then $\frac{1}{x} \in C$.
\end{lemma}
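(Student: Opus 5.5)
We reduce from a general nonzero element of $C$ to elements of $S$, and then to multiplicative atoms, where \Cref{lem:reciprocal} already applies. Throughout we use that $C$ is closed under addition, under multiplication, and under multiplication by nonnegative rationals.

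\emph{Step 1: reduce from $C$ to $S$.} Take $x \in C$ with $x \neq 0$. Its coordinate column $[x]$ lies in $\qq_{\ge 0}^r$. Choose a positive integer $D$ clearing all denominators, so that $[Dx] \in \nn_0^r$. By the coordinate description of $S$, this gives $s := Dx \in S^\ast$, and $s \ne 0$ because $x \neq 0$. Since
\[
\frac1x = D \cdot \frac1s
\]
and $C$ is closed under multiplication by nonnegative rationals, it suffices to show that $1/s \in C$ for every $s \in S^\ast$.

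\emph{Step 2: factor $s$ into multiplicative atoms.} Since $S^\ast$ is a UFM, it is atomic. By \Cref{lem:multiplicative_monoid_reduced}, its only unit is $1$. So either $s = 1$, in which case $1/s = 1 = a_1 \in C$, or we can write $s = p_1 p_2 \cdots p_k$ with each $p_i \in \atoms^\times(S)$.

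\emph{Step 3: apply \Cref{lem:reciprocal} and multiply.} \Cref{lem:reciprocal} gives $1/p_i \in C$ for each $i$. Because $C$ is closed under multiplication, as verified via the nonnegative structure constants $c_{ij,k}$, we get
\[
\frac1s = \prod_{i=1}^k \frac1{p_i} \in C.
\]
This is legitimate because $V$ is a field containing each $p_i$, so all these inverses are computed inside $V$.

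No step here is a real obstacle, since the substantive Perron--Frobenius work is already done in \Cref{lem:positivity} and \Cref{lem:reciprocal}. The only points needing care are these:
\begin{itemize}
\item the scaling $Dx$ genuinely lands in $S$, which uses the equivalence $s \in S \iff [s] \in \nn_0^r$ coming from additive unique factorization;
\item the factorization in Step 2 involves no nontrivial units that would need inverting.
\end{itemize}
Both are supplied by earlier results.
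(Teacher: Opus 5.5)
Your proof is correct and follows the paper's own argument essentially step for step. You clear denominators so that $Dx\in S^\ast$, factor it into multiplicative atoms (with $Dx=1$ as the empty-product case), apply \Cref{lem:reciprocal} to each atom, and then use that $C$ is closed under products and under scaling by nonnegative rationals.
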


\begin{proof}
    By definition the coordinates of \(x\) are nonnegative rational numbers. Choose a common denominator \(m \in \nn\) for these coordinates. The element \(mx\) has nonnegative integer coordinates and is nonzero. Thus \(mx \in S^\ast\). There exist \(k \in \nn_0\) and multiplicative atoms \(p_1,\dots,p_k\) of \(S\) such that
    \[
         mx=p_1\cdots p_k.
    \]
    Here \(k=0\) precisely when \(mx=1\). In this case the product is interpreted as the empty product \(1\). Lemma~\ref{lem:reciprocal} ensures that \(1/p_i \in C\) for every \(i \in \llbracket 1,k\rrbracket\). Since \(C\) is closed under multiplication, it follows that
    \[
        \frac{1}{mx}=\prod_{i=1}^k \frac{1}{p_i}\in C.
    \]
    Finally \(m \in \nn \subseteq C\), and therefore
    \[
        \frac{1}{x}=m\frac{1}{mx}\in C.
    \]
\end{proof}

We are now in a position to prove the main result of this section.

\begin{theorem}\label{thm:multigenerator}
    If $S := \nn_0[\alpha_1, \ldots, \alpha_n] \subseteq \rr_{\ge 0}$ is a bi-UFS then $S = \nn_0$.
\end{theorem}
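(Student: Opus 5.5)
Assume $S$ is a bi-UFS. The plan is to show that the cone $C$ is a single ray, so $r=|\atoms^+(S)|=1$. Then $\atoms^+(S)=\{1\}$, and hence $S=\nn_0$ by the coordinate description $s\in S\iff[s]\in\nn_0^r$.

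By \Cref{lemma:finitely_atoms} the atoms $a_1=1,a_2,\dots,a_r$ are finitely many and $\qq$-linearly independent. Their coordinate columns are the standard basis vectors $e_1,\dots,e_r$, so $C$ is the simplicial cone $\qq_{\ge0}^r$ in coordinates. Its extreme rays are exactly the $r$ rays $\qq_{\ge0}a_i$: a decomposition $a_i=x+y$ with $x,y\in C$ forces both $x$ and $y$ to be supported on coordinate $i$. Suppose for contradiction that $r\ge 2$.

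Multiplication by $\sigma$ maps $C$ bijectively onto itself. We have $\sigma C\subseteq C$ because $C$ is closed under multiplication. By \Cref{lem:reciprocal_better}, $1/\sigma\in C$, so $(1/\sigma)C\subseteq C$, which gives $C\subseteq\sigma C$. Since $L_\sigma$ is a linear bijection of $V$ preserving $C$ in both directions, it permutes the extreme rays of $C$. For each $i$ there is therefore a $j$ with $\sigma a_i\in\qq_{>0}a_j$. The coordinates of $\sigma a_i$ are nonnegative integers, so $\sigma a_i=c\,a_j$ with $c\in\nn$.

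Now compare coordinates. As in \Cref{lem:positivity}, $\sigma a_i=a_i+\sum_{k\ge2}a_ka_i$, so the $a_i$-coordinate of $\sigma a_i$ is at least $1$. This forces $j=i$, giving $\sigma a_i=c_i a_i$ and hence $\sigma=c_i\in\nn$. But $\sigma=a_1+\cdots+a_r$ has coordinate vector $(1,\dots,1)$, while the integer $c_i$ has coordinate vector $(c_i,0,\dots,0)$. When $r\ge2$ these differ, contradicting the uniqueness of coordinates. Hence $r=1$ and $S=\nn_0$.

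The only delicate step is verifying that $L_\sigma$ carries extreme rays to extreme rays. I would check this directly. Suppose $\sigma a_i=x+y$ with $x,y\in C$. Then $a_i=x/\sigma+y/\sigma$, and both summands lie in $C$ because $1/\sigma\in C$ and $C$ is closed under multiplication. Extremality of $\qq_{\ge 0}a_i$ then puts $x/\sigma$ and $y/\sigma$ in $\qq_{\ge0}a_i$, so $x,y\in\qq_{\ge0}\,\sigma a_i$. Thus $\sigma a_i$ spans an extreme ray, and by the classification above it equals some $\qq_{\ge0}a_j$. Everything else reduces to earlier lemmas.
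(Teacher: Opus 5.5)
Your proof is correct and follows essentially the same route as the paper: $1/\sigma\in C$ from \Cref{lem:reciprocal_better} gives $\sigma C=C$, multiplication by $\sigma$ carries extreme rays to extreme rays, and a coordinate comparison finishes. The paper applies this only to the extreme ray $\qq_{\ge0}a_1$, whose image $\qq_{\ge0}\sigma$ is visibly not extreme because $\sigma=a_1+(a_2+\cdots+a_r)$. So your classification of all extreme rays and your diagonal-entry step are harmless but unnecessary; note also that you assert without proof the converse direction of that classification, namely that a vector with two nonzero coordinates does not span an extreme ray.
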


\begin{proof}
If $r = 1$ then the only additive atom is $a_1 = 1$, and therefore $S$ just contains nonnegative multiples of $1$, that is, $S = \nn_0$. Thus, for the sake of contradiction, suppose $r \ge 2$. We first show that the ray $\qq_{\ge 0} a_1$ is an extreme ray of $C$. Suppose $a_1 = x + y$ with $x,y \in C$, and let $x = \sum_i p_i a_i$ and $y = \sum_i q_i a_i$ with $p_i, q_i \in \qq_{\ge 0}$. Since the coordinate representation of $a_1$ must be unique, comparing the coordinate of each atom on both sides of the equation yields $1 = p_1 + q_1$ and $p_i + q_i = 0$ for $i \ge 2$. Since $p_i, q_i \ge 0$, the equation $p_i + q_i = 0$ forces $p_i = q_i = 0$ for $i\ge 2$. Thus $x = p_1 a_1$ and $y = q_1 a_1$, so $x,y \in \qq_{\ge 0}a_1$. Thus $\qq_{\ge 0} a_1$ is an extreme ray. By~\Cref{lem:reciprocal_better}, the element $1/\sigma$ is in $C$. We show that $\sigma C = C$. The inclusion $\sigma C \subseteq C$ is immediate from the closure under multiplication. For the reverse inclusion, given $x \in C$, set $y = x/\sigma$. Then $y \in C$ since $1/\sigma \in C$, and $\sigma y = x$, so $x \in \sigma C$. Thus, $\sigma C = C$.  

We next show that multiplication by $\sigma$ carries extreme rays to extreme rays. Let $\qq_{\ge 0} v$ be an extreme ray of $C$. We aim to show that $\qq_{\ge 0} (\sigma v)$ is an extreme ray as well. Note that $\sigma v \neq 0$ since $\sigma \neq 0$ and $v \neq 0$ in the field $V$, so $\qq_{\ge 0} (\sigma v)$ is a ray. Now suppose that $\sigma v = x + y$ with $x,y \in C$. Multiplying both sides by $1/\sigma$ yields $v = (1/\sigma) x + (1/\sigma) y$. Each of $(1/\sigma)x, (1/\sigma)y$ lies in $C$ by closure once again. Since $\qq_{\ge0}v$ is an extreme ray, both $(1/\sigma)x$ and $(1/\sigma)y$ are nonnegative rational multiples of $v$. Therefore let $(1/\sigma) x = pv$ and $(1/\sigma)y = qv$ for some $p, q \in \qq_{\ge 0}$. Multiplying by $\sigma$ yields $x = p(\sigma v)$ and $y = q(\sigma v)$, and therefore $x,y \in \qq_{\ge 0} (\sigma v)$. Hence $\qq_{\ge 0} (\sigma v)$ is an extreme ray.

Since $a_1 = 1$, the equality $\sigma \cdot a_1 = \sigma$ holds, so multiplying by $\sigma$ carries the extreme ray $\qq_{\ge 0} a_1$ to the extreme ray $\qq_{\ge 0} \sigma$ (by the above). We now show that $\qq_{\ge 0}\sigma$ cannot be an extreme ray when $r \ge 2$. Consider the splitting $\sigma = a_1 + (a_2 + \cdots + a_r)$. Since $\qq_{\ge 0}\sigma$ is an extreme ray, $a_1 = q \sigma$ for some $q \in \qq_{\ge 0}$. However this is impossible, since comparing coordinates for $a_2$ on both sides yields $q = 0$, while comparing coordinates for $a_1$ gives $q = 1$, a contradiction. Therefore $r \ge 2$ is impossible, so $r = 1$, in which case the previous argument shows that $S = \nn_0$.
\end{proof}

\bigskip

\section{Reduction to the Complex Multigenerator Case}\label{sec:complex}

In this section, we show that the bi-UFS problem for subsemidomains of $\cc$ with finitely many additive atoms reduces to the positive case treated in the previous sections. The strategy is to use a Perron--Frobenius argument on the matrix of multiplication by the sum of additive atoms: a strictly positive eigenvector $w$ assigns to each additive atom a positive real eigenvalue, and these eigenvalues define a semidomain isomorphism onto a subsemidomain of $\rr_{\ge 0}$.

\begin{theorem}\label{thm:reduction}
    Let $S \subseteq \cc$ be a bi-UFS that is not a UFD. If $|\atoms^+(S)| < \infty$ then $S$ is isomorphic to a positive semidomain.
\end{theorem}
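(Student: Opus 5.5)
Throughout, let $\atoms^+(S)=\{a_1,\dots,a_r\}$. The idea is to rerun the linear-algebra set-up of \Cref{sec:multigenerator} inside $\cc$, and then to take the positive semidomain to be the image of $S$ under a suitable field embedding $\varphi$ of $V:=\operatorname{span}_\qq\{a_1,\dots,a_r\}$ into $\cc$. The embedding will be chosen so that it sends every additive atom to a positive real number. It is selected by Perron--Frobenius applied to the multiplication matrix of $\sigma:=a_1+\cdots+a_r$.

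\textbf{Step 1 (reducedness).} I first use the hypothesis that $S$ is not a UFD to show that $(S,+)$ is reduced. Suppose $-1\in S$. Then $S$ is a subring of $\cc$, hence an integral domain, and the bi-UFS hypothesis on $(S^\ast,\cdot)$ makes it a UFD, which is excluded. So $-1\notin S$. From this I want to conclude that $0$ is the only additive unit. The attempted route: a nonzero additive unit $x$ makes $\mathcal U(S,+)$ a nonzero additive group, and it is closed under multiplication by $S$. I would then try to combine this with the multiplicative unique factorization to produce $-1$. If that direct route stalls, I would instead interpret ``not a UFD'' as exactly the reducedness of $(S,+)$.

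\textbf{Step 2 (rational coordinates).} Once $(S,+)$ is reduced, the argument of \Cref{lemma:finitely_atoms} works verbatim over $\cc$. A nontrivial $\qq$-relation among the $a_i$ would give two distinct additive factorizations of one element, so the atoms are $\qq$-linearly independent. \Cref{lem:one_atom}(1) also goes through, giving $1\in\atoms^+(S)$, and I label $a_1=1$. As in \Cref{sec:multigenerator}:
\begin{itemize}
\item $V$ is a subfield of $\cc$ of dimension $r$ over $\qq$;
\item each $M_s$ ($s\in S$) is a nonnegative integer matrix;
\item $s\in S$ if and only if $[s]\in\nn_0^r$;
\item \Cref{lem:M_s_properties} holds, since its proof uses only that $V$ is a field.
\end{itemize}
In particular $M:=M_\sigma$ is irreducible. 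Its diagonal entries are at least $1$, by the same computation as in \Cref{lem:positivity}, so $M$ is primitive.

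\textbf{Step 3 (finding the embedding; the main obstacle).} For any field embedding $\varphi\colon V\to\cc$, set $\lambda_\varphi:=(\varphi(a_1),\dots,\varphi(a_r))$. The identity $\lambda[x]=x$, transported through $\varphi$, gives $\lambda_\varphi M_s=\varphi(s)\lambda_\varphi$ for every $s\in S$, exactly as in the proof of \Cref{lem:positivity}. The characteristic polynomial of $M$, which is the matrix of $L_\sigma$, equals $m_\sigma(x)^{[V:\qq(\sigma)]}$, and its roots are the values $\varphi(\sigma)$.
\begin{itemize}
\item By \Cref{thm:PF}, the Perron root $\rho$ is a simple eigenvalue. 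This forces $[V:\qq(\sigma)]=1$, and $\rho=\varphi_0(\sigma)$ for a unique embedding $\varphi_0$.
\item Then $\lambda_{\varphi_0}$ is a nonzero left $\rho$-eigenvector of $M$. The left $\rho$-eigenspace is one-dimensional and spanned by a strictly positive vector, so $\lambda_{\varphi_0}$ is a complex multiple of that vector.
\item Its first coordinate is $\varphi_0(1)=1$, so the multiple is a positive real. Hence $\varphi_0(a_i)\in\rr_{>0}$ for all $i$.
\end{itemize}
I expect the delicate points to be exactly these two: justifying the passage from a simple eigenvalue to $\qq(\sigma)=V$, and handling the case where complex-conjugate embeddings might a priori compete for the real root $\rho$. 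Uniqueness of $\varphi_0$ follows from simplicity of $\rho$.

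\textbf{Step 4 (conclusion).} Restrict $\varphi_0$ to $S$. It is injective because it is a field embedding, and it preserves $0$, $1$, sums and products. Every element of $S$ is an $\nn_0$-combination of the $a_i$, so $\varphi_0(S)\subseteq\rr_{\ge0}$. Therefore $\varphi_0(S)$ is a positive semidomain isomorphic to $S$ as a semiring.
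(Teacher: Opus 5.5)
Your proof is correct. Steps 3--4 reach the positive realization by a genuinely different route from the paper.

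\textbf{Comparison with the paper.} The paper takes a strictly positive \emph{right} Perron vector $w$ of $B=M_\sigma$. Since each $M_{a_i}$ commutes with $B$, it gets $M_{a_i}w=\lambda_i w$ with $\lambda_i>0$, and it defines $\phi(\sum n_ia_i)=\sum n_i\lambda_i$. It then checks by hand that $\phi$ is multiplicative (via $M_{st}w=M_sM_tw$) and injective (by extending $\qq$-linearly to $V$ and using that $V$ is a field).

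You instead start from the field embeddings $\varphi\colon V\to\cc$. You observe that $(\varphi(a_1),\dots,\varphi(a_r))$ is a common \emph{left} eigenvector of all $M_s$. Simplicity of the Perron root then singles out the embedding whose atom vector is a positive multiple of the left Perron vector.

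Your route makes the homomorphism and injectivity properties automatic. It also yields extra information: $V=\qq(\sigma)$, and the positive embedding is unique. The paper's $\widetilde\phi$ is in fact this same real embedding $\varphi_0$. The paper's route stays within elementary linear algebra and never needs the identity $\chi_{M_\sigma}=m_\sigma^{[V:\qq(\sigma)]}$.

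\textbf{The anticipated delicate points are harmless.} Embeddings of $\qq(\sigma)$ correspond bijectively to the distinct roots of $m_\sigma$, so no conjugate embedding can compete for $\rho$. More simply, any $\varphi$ with $\varphi(\sigma)=\rho$ already works, even before proving $V=\qq(\sigma)$. Its atom vector is a left $\rho$-eigenvector with first coordinate $\varphi(1)=1$, hence a positive multiple of the Perron vector.

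\textbf{On Step 1.} The paper silently uses reducedness of $(S,+)$, both for $1\in\atoms^+(S)$ and for $\qq$-independence of the atoms. You were right to flag this. Your tentative route closes, so no reinterpretation of the hypothesis is needed.
\begin{enumerate}
\item Suppose $u\neq 0$ is an additive unit. Then $-u\in S^\ast$ and $u^2=(-u)^2$.
\item Since $(S^\ast)_{\red}$ is free, $x^2=y^2$ forces $x=y$ there. Hence $u$ and $-u$ are associated, so $-1\in\mathcal U(S^\ast)\subseteq S$.
\item Thus $S$ is a subring of $\cc$ whose nonzero multiplicative monoid is a UFM, i.e.\ a UFD, which is excluded.
\end{enumerate}
Alternatively, finiteness of $\atoms^+(S)$ forces the same conclusion. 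A nonzero additive unit group is infinite, and translating an atom by units gives infinitely many atoms. So there would be no atoms at all, every element would be a unit, and again $-1\in S$.
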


\begin{proof}
    Enumerate $\atoms^+(S)$ as the set $\{ a_1, a_2, \ldots, a_r\}$, where $a_1 = 1$, since we know that $1 \in \atoms^+(S)$. Define the vector space $V := \operatorname{span}_{\qq} \{ a_1, a_2, \ldots, a_r\} \subseteq \cc$. Note that $\{ a_1, a_2, \ldots, a_r\}$ is indeed a $\qq$-basis for $V$ by the uniqueness of additive factorization. That is, $\{ a_1, a_2, \ldots, a_r\}$ are linearly independent over $\qq$. We now show that $V$ is closed under multiplication. Take $x = \sum_i q_i a_i$ and $y = \sum_j p_j a_j$ in $V$. For each pair $i, j$, the product $a_i a_j$ lies in $S$, and hence has a unique additive factorization $a_i a_j = \sum _k c_{ij, k} a_k$, with $c_{ij,k} \in \nn_0$. Therefore \[xy = \sum_{i,j} q_i p_j a_i a_j = \sum_k \left( \sum_{i,j} q_i p_j c_{ij,k}\right)a_k \in V,\] and $V$ is closed under multiplication. Additionally $1 = a_1 \in V$, and $V \subseteq \cc$ has no zero divisors, so $V$ is a finite-dimensional domain over $\qq$. In particular, this implies $V$ is a field.

    For each $s \in S$, define a matrix $M_s \in M_r(\nn_0)$ by multiplication on the additive-atom basis \[sa_j = \sum_{i=1}^r (M_s)_{ij} a_i.\]
    The entries are nonnegative integers since the right-hand side is the additive factorization of $sa_j$. By the same argument as in~\Cref{lem:M_s_properties}, these matrices satisfy $M_1 = I$, $M_{s+t} = M_s + M_t$, and $M_{st} = M_s M_t$. Now define $B := \sum_{i=1}^r M_{a_i} = M_\sigma$, where $\sigma := a_1 + a_2 + \cdots + a_r \in S$. By the irreducibility part of~\Cref{lem:M_s_properties} applied with $e = \sigma$ (whose coefficients are all $1$), $B$ is irreducible.

    By the Perron--Frobenius Theorem, $B$ has a strictly positive eigenvector $w \in \rr_{>0}^r$ with eigenvalue $\rho(B) > 0$. Furthermore, the Perron eigenspace is one dimensional. For every atom $a_i$, the matrices $M_{a_i}$ and $B$ must commute, because multiplication in $S$ is commutative. Therefore $B(M_{a_i} w) = M_{a_i}(Bw) = \rho(B)M_{a_i}w$, and $M_{a_i}w$ is in the Perron eigenspace of $B$. Since the eigenspace is one dimensional, there exists $\lambda_i \in \rr$ such that $M_{a_i} w =\lambda_i w$. We may conclude that $\lambda_i > 0$ because $M_{a_i}$ is nonzero and has nonnegative entries, and $w$ has positive entries. Now define for $s = n_1 a_1 + n_2 a_2 + \cdots + n_r a_r \in S$, the positive realization $s_r := n_1 \lambda_1 + n_2 \lambda_2 + \cdots + n_r \lambda_r \in \rr_{\ge 0}$. Note that this is well-defined because $(S,+)$ is a UFM. Let $R := \{ s_r : s \in S\} \subseteq \rr_{\ge 0}$. We now verify that the map $\phi : S \to R$ defined by $\phi(s) = s_r$ preserves multiplication and addition.

    Addition follows from the definition, so $\phi(s+t) = \phi(s) + \phi(t)$. For multiplication, let $s = \sum_i n_i a_i$ be an element in $S$. Then $M_s = \sum_i n_i M_{a_i}$, so $M_s w = \sum_i n_i M_{a_i} w = \sum_i n_i \lambda_iw = \phi(s)w$. Applying the same identity to $t \in S$ gives $M_t w = \phi(t) w$. Then $M_{st} w = M_s M_t w = M_s (\phi(t) w) = \phi(t) M_s w = \phi(t)\phi(s) w$. On the other hand, $M_{st} w = \phi(st) w$, so $\phi(st) = \phi(s)\phi(t)$. Therefore multiplication is preserved. Note that $\phi(1) = 1$ because $M_1 = I$, and similarly $\phi(0) = 0$.

    It remains to show that $\phi$ is injective. To do this, extend $\phi$ from $S$ to $V$ by defining $\widetilde \phi \left( \sum_i q_i a_i\right) = \sum_i q_i \lambda_i$, for $q_i \in \qq$. This is well-defined since the atoms $a_1, a_2, \ldots, a_r$ are linearly independent over $\qq$. The same matrix computation above shows $\widetilde \phi (xy) = \widetilde \phi(x) \widetilde \phi(y)$ for all $x,y \in V$, and $\widetilde \phi(1) = 1$. If $\widetilde \phi(x) = 0$ for some nonzero $x \in V$ then since $V$ is a field, $x^{-1} \in V$, and so $1 = \widetilde \phi(1) = \widetilde \phi(x x^{-1}) = \widetilde \phi(x) \widetilde \phi(x^{-1}) = 0$, a contradiction. Hence $\widetilde \phi$ is injective. Therefore its restriction $\phi : S \to R$ is injective.

    By the definition of $R$, the map $\phi : S\to R$ is also surjective. Hence $\phi$ is a bijection preserving addition, multiplication, $0$, and $1$. Therefore $S$ is isomorphic to $R$. Since $R \subseteq \rr_{\ge 0}$, we know that $R$ is a positive semidomain. Since semidomain isomorphisms preserve additive and multiplicative factorization, $R$ is bi-UFS whenever $S$ is bi-UFS.
\end{proof}

To apply~\Cref{thm:reduction} to a finitely generated complex semidomain, we need the finiteness hypothesis on additive atoms. The argument from~\Cref{lemma:finitely_atoms} carries over to the complex setting without change.

\begin{lemma}\label{lem:finite_atoms}
    Let $\alpha_1, \alpha_2, \ldots, \alpha_n \in \cc$ be algebraic, and let $S = \nn_0[\alpha_1, \alpha_2, \ldots, \alpha_n] \subseteq \cc$. If $S$ is a bi-UFS that is not a UFD then $|\atoms^+(S)| < \infty$.
\end{lemma}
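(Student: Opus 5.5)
The plan is to reuse the argument of \Cref{lemma:finitely_atoms} essentially verbatim. Set $K := \qq(\alpha_1,\ldots,\alpha_n) \subseteq \cc$. Since each $\alpha_i$ is algebraic over $\qq$, $K$ is a finite extension of $\qq$, and $S \subseteq K$. The goal is to show that $\atoms^+(S)$ is $\qq$-linearly independent inside $K$; this yields $|\atoms^+(S)| \le [K:\qq] < \infty$.

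For the independence, I suppose a nontrivial $\qq$-relation among finitely many distinct additive atoms $a_1,\ldots,a_m$. Clearing denominators gives $\sum_i n_i a_i = 0$ with $n_i \in \zz$ not all zero. Moving the negative terms across gives
\[
\sum_{n_i>0} n_i a_i = \sum_{n_i<0} (-n_i) a_i .
\]
These two sums use disjoint sets of atoms. They are therefore distinct formal factorizations in $\Z(S,+)$, provided at least one side is nonempty. So it remains to rule out the degenerate case in which one side is empty, say all $n_i \ge 0$ with some $n_i>0$. In that case a nonempty sum of atoms equals $0$. This makes a nonzero element $a_i$ an additive divisor of $0$, and hence an additive unit, which contradicts its being an atom. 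Once the degenerate case is excluded, the displayed equation exhibits two different factorizations of one element of $(S,+)$, contradicting that $(S,+)$ is a UFM.

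The main obstacle is the one place where the complex setting genuinely differs from the positive one: the structure of $(S,+)$. In a positive semidomain, $(S,+)$ is automatically reduced. In $\cc$, however, $S$ may contain both $x$ and $-x$, so $(S,+)$ may have nontrivial units. The factorization set $\Z$ is then defined modulo units, and the argument above must be run in $(S,+)_{\red}$. I would handle this using the hypothesis that $S$ is not a UFD. If $-1 \in S$, then $S$ is a ring, indeed a domain since $S \subseteq \cc$. Its multiplicative monoid $S^*$ being a UFM would then make $S$ a UFD, which is excluded. So $-1 \notin S$. Next I would argue that $(S,+)$ is reduced. If $x + y = 0$ with $x,y \in S$ nonzero, then multiplying by $x^{-1}$ is not available in $S$. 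Instead, I would use that $\mathcal U(S,+)$ is an ideal-like subset closed under multiplication by $S$: it is an additive subgroup $G$ with $SG \subseteq G$. If $G \ne 0$, take $0 \ne g \in G$. Then $g$ and $-g$ both lie in $S$, so $g^2$ and $-g^2 = (-g)g$ lie in $S$, and $g^2 \in G$. I would then try to deduce $-1 \in S$, or else reach a contradiction with multiplicative unique factorization, for instance by using that $S^*$ is reduced after adapting \Cref{lem:multiplicative_monoid_reduced}. This reducedness step is where I expect the real work to lie; the paper's phrase ``not a UFD'' suggests this is exactly its role.

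With $(S,+)$ reduced, the independence argument goes through unchanged and gives $|\atoms^+(S)| \le [K:\qq]$.
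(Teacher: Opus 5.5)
Your core argument is the paper's argument. The paper's proof says only that the proof of \Cref{lemma:finitely_atoms} carries over verbatim: it uses just that $K=\qq(\alpha_1,\ldots,\alpha_n)$ is finite over $\qq$ and that $(S,+)$ is a UFM, so $\atoms^+(S)$ is $\qq$-independent and $|\atoms^+(S)|\le[K:\qq]$. You go further than the paper by noticing that inside $\cc$ the monoid $(S,+)$ could a priori have nonzero units. Factorizations then live in $(S,+)_{\red}$, distinct atoms could be associates, and the step ``disjoint supports give distinct factorizations'' is no longer automatic. Your handling of the degenerate one-sided relation is fine and needs no reducedness.

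However, you leave the reducedness step unproved. ``I would then try to deduce $-1\in S$, or else reach a contradiction'' is not an argument. The tool you suggest also does not help: \Cref{lem:multiplicative_monoid_reduced} rests on the order structure of $\rr_{\ge 0}$, and it concerns multiplicative rather than additive units. So, as written, the proposal has a hole exactly where you say the real work lies.

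The missing idea is to use that a nonzero additive unit is algebraic.
\begin{itemize}
\item Let $G=\mathcal U(S,+)$; as you note, $G$ is an additive subgroup with $SG\subseteq G$. Take $0\ne g\in G$.
\item Since $g\in K$, it satisfies $c_kg^k+\cdots+c_1g+c_0=0$ with $c_j\in\zz$ and $c_0\ne 0$.
\item Each $g^j$ with $j\ge 1$ lies in $G$, because $g^j=g^{j-1}g$ with $g^{j-1}\in S$. Since $G$ is closed under integer combinations, $c_0=-\sum_{j\ge1}c_jg^j\in G$.
\item Hence $-|c_0|\in S$, and so $-1=-|c_0|+(|c_0|-1)\in S$.
\end{itemize}
This gives a clean dichotomy. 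Either $-1\in S$, in which case $S$ is a subring of $\cc$ whose nonzero elements form a UFM, i.e.\ a UFD, which is excluded. (In that case $(S,+)$ is a group with $\atoms^+(S)=\emptyset$ anyway, so the bound would hold trivially.) Or $(S,+)$ is reduced, and your independence argument goes through verbatim. With this paragraph inserted your proof is complete, and it is more careful than the paper's.
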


\begin{proof}
The proof is identical to that of~\Cref{lemma:finitely_atoms}. This is because positivity of the $\alpha_i$ is never used, only that $K := \qq(\alpha_1, \ldots, \alpha_n)$ is a finite extension of $\qq$ containing $S$, and that $(S,+)$ is a UFM. The same argument shows that $\atoms^+(S)$ is $\qq$-linearly independent in $K$, hence finite, with $|\atoms^+(S)| \le [K : \qq] < \infty$.
\end{proof}

Specializing further to the cyclic case, we identify the image $R$ from~\Cref{thm:reduction} as $\nn_0[\beta]$ for an explicit positive real $\beta$.

\begin{corollary}\label{cor:reduction_again}
    Let $\alpha \in \cc$ be algebraic, and let $S = \nn_0[\alpha] \subseteq \cc$. If $S$ is a bi-UFS that is not a UFD then $S$ is isomorphic to $\nn_0[\beta]$ for some $\beta \in \rr_{>0}$.
\end{corollary}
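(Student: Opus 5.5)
The plan is to combine \Cref{lem:finite_atoms} with \Cref{thm:reduction}, and then identify the image semidomain $R$ explicitly as the subsemiring of $\rr_{\ge 0}$ generated by the image of $\alpha$.

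First, since $S=\nn_0[\alpha]$ is a bi-UFS that is not a UFD and $\alpha$ is algebraic, \Cref{lem:finite_atoms} gives $|\atoms^+(S)|<\infty$. Then \Cref{thm:reduction} applies and produces the semidomain isomorphism $\phi\colon S\to R$, where $\phi(s)=s_r$ and $R=\phi(S)\subseteq\rr_{\ge 0}$. Set $\beta:=\phi(\alpha)$. This is a real number with $\beta\ge 0$. Because $\alpha\neq 0$ (otherwise $S=\nn_0$; in that case one may simply take $\beta=1$) and $\phi$ is injective with $\phi(0)=0$, we get $\beta>0$.

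Next we show $R=\nn_0[\beta]$. Every $s\in S$ has the form $s=f(\alpha)$ for some $f(x)\in\nn_0[x]$. Since $\phi$ preserves $0$, $1$, addition and multiplication, induction on the degree and on the coefficients of $f$ gives
\[
\phi(f(\alpha))=f(\phi(\alpha))=f(\beta).
\]
Hence $R=\{f(\beta): f\in\nn_0[x]\}=\nn_0[\beta]$, and therefore $S\cong\nn_0[\beta]$.

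The only point needing care is positivity of $\beta$. The cleanest route is injectivity of $\phi$, as above. Alternatively, $\beta$ is the eigenvalue $\lambda$ with $M_\alpha w=\lambda w$, where $M_\alpha$ is a nonzero nonnegative matrix and $w>0$, so positivity is immediate either way.
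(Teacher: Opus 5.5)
Your proposal is correct and follows the paper's proof essentially verbatim: apply \Cref{lem:finite_atoms} and \Cref{thm:reduction}, set $\beta=\phi(\alpha)$, and use $\phi(f(\alpha))=f(\beta)$ to get $R=\nn_0[\beta]$, with $\beta>0$ from injectivity of $\phi$. You also treat the degenerate case $\alpha=0$ separately; there injectivity would give $\beta=0$. This is a small point that the paper's argument passes over silently.
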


\begin{proof}
    By~\Cref{lem:finite_atoms}, the semidomain $S$ has finitely many additive atoms. Therefore we may apply~\Cref{thm:reduction}, yielding the isomorphism $\phi : S \to R \subseteq \rr_{\ge 0}$. Since $\alpha \in S$, we may set $\beta = \phi(\alpha) \in R$. We claim that $R = \nn_0[\beta]$. Note that every element of $S$ has the form $f(\alpha)$ for some polynomial $f(x) \in \nn_0[x]$. Take an element $r \in R$. Since $R = \phi(S)$, there exists some $s \in S$ such that $r = \phi(s)$. But $s \in S = \nn_0[\alpha]$, so $s = f(\alpha)$ for some $f(x) \in \nn_0[x]$. Therefore $r = \phi(s) =\phi(f(\alpha)) = f(\phi(\alpha)) = f(\beta) \in \nn_0[\beta]$, where the third equality follows from $\phi$ preserving addition and multiplication. Hence $R \subseteq \nn_0[\beta]$.

    Conversely, take an element $f(\beta) \in \nn_0[\beta]$ for some $f(x) \in \nn_0[x]$. Since $\beta = \phi(\alpha)$, we know that $f(\beta) = f(\phi(\alpha)) = \phi(f(\alpha))$. Note that $f(\alpha) \in S$, so $\phi(f(\alpha)) \in \phi(S) = R$. Hence $f(\beta) \in R$ and $\nn_0[\beta] \subseteq R$. Combining both inclusions yields $R = \nn_0[\beta]$. Finally, $\beta \in \rr_{\ge 0}$ since $R \subseteq \rr_{\ge 0}$, and $\beta \ne 0$ since $\phi$ sends nonzero elements to nonzero elements (as $\phi$ is injective), so $\beta \in \rr_{>0}$.
\end{proof}

By~\Cref{lem:finite_atoms} and~\Cref{thm:reduction}, every non-positive bi-UFS of the form $\nn_0[\alpha_1, \ldots, \alpha_n] \subseteq \cc$ with each $\alpha_i$ algebraic is either a UFD or isomorphic to a bi-UFS, and hence is classified by~\Cref{thm:multigenerator}. In particular,~\Cref{cor:reduction_again} yields the cyclic case: every bi-UFS of the form $\nn_0[\alpha] \subseteq \cc$ with $\alpha$ algebraic is either a UFD or isomorphic to $\nn_0[\beta]$ for some $\beta \in \rr_{>0}$, which by~\Cref{thm:final_theorem} forces $\beta \in \nn$.

\bigskip

\section*{Acknowledgments}

The authors are grateful to the CrowdMath Internship (CMI), a year-long mathematics research program hosted by the MIT Department of Mathematics, for making this work possible. They especially thank their CMI research mentors, Victor Gonzalez and Felix Gotti, for their dedicated guidance during the reading period. The authors also thank the PRIMES program for fostering this research experience and providing a supportive academic environment in which to pursue this work. During the preparation of this manuscript, the sixth author was supported by the National Science Foundation through an Ascend Postdoctoral Fellowship under award No.~2513588.



\bigskip

\end{document}